\title{Identifiability and consistent estimation of nonparametric translation hidden Markov models with general state space}
\date{} 
\author[*]{\'Elisabeth Gassiat}
\author[$\dag$]{Sylvain Le Corff}
\author[*]{Luc Leh\'ericy}
\affil[*]{{\small Laboratoire de Math\'ematiques d'Orsay, Univ. Paris-Sud, CNRS, Universit\'e Paris-Saclay, 91405 Orsay, France.}}
\affil[$\dag$]{{\small  Samovar, T\'el\'ecom SudParis,  D\'epartement CITI, TIPIC, Institut Polytechnique de Paris, France.}}
\newcommand{\M}{\mathcal{M}}
\newcommand{\N}{\mathbb{N}}
\newcommand{\R}{\mathbb{R}}
\newcommand{\C}{\mathbb{C}}
\newcommand{\E}{\mathbb{E}}
\newcommand{\po}{\mathbb{P}}
\renewcommand{\tilde}{\widetilde}
\renewcommand{\hat}{\widehat}
\newcommand\pen{\mathrm{pen}}
\DeclareMathOperator*{\argmax}{arg\,max}
\newcommand\noisedist{P}
\newcommand\noisedistbis{\widetilde{P}}
\newcommand\eqsp{\,}
\newcommand\rmd{\mathrm{d}}
\newcommand\rme{\mathrm{e}}
\newcommand{\Acal}{{\mathcal{A}}}
\newcommand{\Ccal}{{\mathcal{C}}}
\newcommand{\Fcal}{{\mathcal{F}}}
\newcommand{\Kbf}{{\mathbf{K}}}
\newcommand{\Lbf}{{\mathbf{L}}}
\newcommand{\Mcal}{{\mathcal{M}}}
\newcommand{\Pcal}{{\mathcal{P}}}
\newcommand{\Rcal}{{\mathcal{R}}}
\newcommand{\Scal}{{\mathcal{S}}}
\newcommand{\Zbb}{{\mathbb{Z}}}
\newcommand{\Bcal}{{\mathcal{B}}}
\newcommand{\majZ}{\Xi}
\newcommand{\Xfrak}{{\mathfrak{X}}}
\newcommand{\Sbf}{{\mathbf{S}}}
\newcommand{\Zsf}{{\mathsf{Z}}}
\newcommand{\Id}{{\text{Id}}}
\newcommand{\Xcal}{{\mathcal{X}}}
\newcommand{\Nbb}{\mathbb{N}}
\newcommand{\Pfrak}{{\mathfrak{P}}}
\newcommand{\Tbf}{{\mathbf{T}}}
\newcommand{\one}{{\mathbf{1}}}
\newcommand{\Supp}{{\text{Supp}}}
\renewcommand{\geq}{\geqslant}
\renewcommand{\leq}{\leqslant}
\newtheorem{theo}{Theorem}
\newtheorem{prop}{Proposition}
\newtheorem{cor}{Corollary}
\newtheorem{lem}{Lemma}
\newtheorem*{ex}{Example}
\newcounter{hypH}
\newenvironment{hypH}{\refstepcounter{hypH}\begin{itemize}
\item[\textbf{H\arabic{hypH}}]}{\end{itemize}}
\begin{document}

\maketitle

\begin{abstract}
This paper considers hidden Markov models where the observations are given as the sum of a latent state which lies in a general state space and some independent noise with unknown distribution. It is shown that these fully nonparametric translation models are identifiable with respect to both the distribution of the latent variables and the distribution of the noise, under mostly a light tail assumption on the latent variables. Two nonparametric estimation methods are proposed and we prove that the corresponding estimators are consistent for the weak convergence topology. These results are illustrated with numerical experiments.
\end{abstract}

{\bf Keywords:} Nonparametric estimation, latent data models, deconvolution.

\section{Introduction}
This paper considers nonparametric translation hidden Markov models where, for all $i = 1, \dots, n$, the observation $Y_i$ is 
\begin{equation}
\label{eq:model:deconvolution}
Y_i = X_i + \varepsilon_i\eqsp,
\end{equation}
where $n\geqslant 1$ is the number of observations, $(X_i)_{i = 1, \dots, n}$ is a $d$ dimensional hidden stationary Markov chain  and $(\varepsilon_i)_{i = 1, \dots, n}$ are independent and identically distributed random variables independent of $(X_i)_{i = 1, \dots, n}$. Both the distributions of the latent variables and of the noise $\varepsilon_1$ are unknown. The first objective of this paper is to prove that the law of the hidden states may be recovered using only the observations $(Y_i)_{i = 1, \dots, n}$  when no assumption is made on the noise distribution and with only a weak nonparametric assumption on the distribution of the hidden Markov chain.  In addition, consistent estimation procedures based either on a least squares or on a maximum likelihood approach are proposed. This work provides the first contribution to establish identifiability results in a fully nonparametric setting for hidden Markov models with general state space.

The use of latent data models is ubiquitous in time series analysis across a wide range of applied science
and engineering domains such as  signal processing \cite{crouse:nowak:baraniuk:1998}, genomics \cite{yau:papas:roberts:holmes:11,wang:lebarbier:aubert:robin:2017},  target tracking \cite{sarkka:vehtari:lampinen:2007}, enhancement and segmentation of speech and audio signals \cite{rabiner:89}, see also \cite{sarkka:2013,douc:moulines:stoffer:2014, zucchini:macdonald:langrock:2016} and the numerous references therein. The specific setting of translation hidden Markov models described by \eqref{eq:model:deconvolution} is commonly used in statistical signal processing, such as for nonlinear phase estimation, where the problem appears in many applications: detection of phase synchronization, estimation of instantaneous frequencies or in neuroscience, see \cite{dahlhaus:kiss:neddermeyer:2018}, \cite{fell:axmacher:2011} and the references therein. In these applications, the latent signal is modeled as $X_{i} = g(Z_{i})$, for some sequence $(Z_{i})_{i\geq 1}$ of relevant hidden variables and function $g:\R^{\ell} \rightarrow \R^{d}$. In \cite{dumont:lecorff:2017b}, such models are used to detect oscillation patterns in human electrocardiogam recordings and to estimate a noisy Rossler attractor. In such a case, model (\ref{eq:model:deconvolution}) is a
nonparametric hidden regression model given by
\begin{equation}
\label{eq:model:regresshidden}
Y_i = g\left(Z_i \right)+ \varepsilon_i\eqsp, \;i\geq 1\eqsp.
\end{equation}

Although parametric hidden Markov models have been widely studied and are appealing for a wide range of applications, parametric inference procedures may lead to poor results in real data and high dimensional learning problems. This explains the recent  keen interest for nonparametric latent data models which have been introduced in many disciplines such as climate state identification \cite{lambert:whiting:metcalfe:03,touron:2018}, genomics \cite{yau:papas:roberts:holmes:11}, statistical modelling of animal movement \cite{langrock2015nonparametric} or biology \cite{volant:berard:2014}. \cite{levine2011maximum} introduce an iterative algorithm with similar monotonicity property as the Expectation Maximization algorithm to estimate a nonparametric finite mixture of multivariate components with applications to simulated data and to the water-level dataset (see the mixtools package). 
 In \cite{langrock2017markov},  Markov-switching generalized additive models where the function $g$ and the noise distribution in \eqref{eq:model:regresshidden} depend on a hidden label are used to describe signals with complex dynamic patterns. The authors of this paper introduced an efficient  nonparametric estimation method of the unknown functions of the hidden signal.
The spline-based nonparametric estimation of these functionals is applied to advertising data and to Spanish energy price data, see also \cite{langrock2015nonparametric} for an application of nonparametric regression estimation with P-Splines to the vertical speed of diving beaked whales.

For finite state space hidden Markov models, such nonparametric modeling  has been recently validated by theoretical identifiability results and the analysis of estimation procedures with provable guarantees, 
see \cite{MR3439359}, \cite{MR3509896}, \cite{MR3543517}, \cite{LucJMLR}. In this setting, the parameters to be estimated are  the transition matrix of the hidden chain and the emission densities. 
See also \cite{gassiat:rousseau:2016} and \cite{akakpo_translationhmm} for translation hidden Markov models with finite state space.
While certainly of interest, the finite state space setting may be too restrictive for many applications. 

The inverse problem in (\ref{eq:model:deconvolution}) is also known as the deconvolution problem. There is a wide range of literature on density deconvolution when the distribution of the noise $ \varepsilon_i$ is assumed to be known and  the random variables $(X_i,\varepsilon_i)_{i = 1, \dots, n}$ are assumed to be independent and identically distributed, see 
\cite{MR1033106}, 
\cite{MR1047309}, \cite{MR1054861},
for some early nonparametric deconvolution methods, \cite{MR997599} and \cite{MR1126324} for minimax rates, see also \cite{MR3314482}
and references therein for a recent work. However, when the distribution of  the noise is unknown and the observations are independent, model (\ref{eq:model:deconvolution}) can not be identified in full generality.

In this paper, we establish the identifiability of the fully nonparametric hidden translation model under the weak assumption that the Laplace transform of the latent variable has an exponential growth smaller than $2$ and some assumption on the distribution of two consecutive hidden states which is roughly a dependency assumption, see  Theorem \ref{theoident1}. In the case of real valued hidden Markov models, identifiability is extended to latent variables having Laplace transform with exponential growth smaller than $3$, see Theorem \ref{theoident2}. Two different methods are proposed to recover the distribution of the latent variables:  a least squares method arising naturally from the identifiability proof and a classical maximum likelihood method using discrete probability measures as approximation of all probability measures. Both estimators are proved to be consistent for the weak convergence topology, see Theorem \ref{theoconsi1} and Theorem \ref{theoconsi2}. 
The most surprising result is that the identifiability of the signal distribution only requires an assumption on the tail of its distribution and a dependency assumption, and does not require any assumption on the unknown distribution of the noise. This has to be compared to works such as \cite{Wilhelm2015},  \cite{MR2970334} or \cite{MR2374986} in which some conditions require several operators to be injective and some variables  to have densities.
It is also important to note that Theorem \ref{theoident1} encompasses the case of dependent observations in which the hidden signal is not necessarily a Markov chain, which can be the case  for the nonparametric hidden regression model (\ref{eq:model:regresshidden}). In that sense, our work extends the identification results of \cite{dumont:lecorff:2017,dumont:lecorff:2017b} to the cases where the distribution of the additive noise is unknown. 
Such a general result provides also the first theoretical guarantees for the identification of nonparametric latent data models which have been applied in various frameworks such as in \cite{langrock2017markov} or \cite{langrock2015nonparametric}.

The paper is organized as follows. Section~\ref{sec:identifiability:general} displays the general identifiability results. The consistency of the least squares approach and that of  the maximum likelihood estimation procedures are given in Section \ref{sec:estimation}. These results are supported by simulations  in Section \ref{sec:simu}.  Section~\ref{sec:discussion} provides a synthesis of the results obtained in the paper and discusses some opportunities for further research. In particular, it points out an important, yet very challenging, unsolved problem in the setting of this paper: obtainning convergence rates in deconvolution problems where absolutely no information about the noise is available.

\section{Identifiability Theorems}
\label{sec:identifiability:general}
Consider a sequence of random variables  $(Y_{i})_{i\geq 1}$ taking values in $\R^{d}$ and satisfying model (\ref{eq:model:deconvolution}) in which the hidden Markov chain $(X_{i})_{i\geq 1}$ is stationary.
In the following, $\R^{d}$ is endowed with its Borel sigma-field $\Bcal(\R^d)$.
For each transition kernel $K: \R^d\times \mathcal{B}(\R^d) \to [0,1]$ with a unique stationary  distribution $\mu_K$, define the measure $R_{K}$ on $\R^{2d}$ as follows. For all $E\in \mathcal{B}(\R^{2d})$,
$
R_{K}(E) = \int \mu_K(\rmd x) K(x,\rmd y)\mathds{1}_E(x,y)\eqsp.
$
For any probability distribution $P$ on $\R^d$, denote by 
$\po_{K,P}$ the distribution of the sequence $(Y_{i})_{i\geq 1}$ when the stationary Markov chain  $(X_{i})_{i\geq 1}$ has transition $K$ and $\varepsilon_{1}$ has distribution $P$. 
For any $\rho >0$, let $\M_{\rho}$ be the set of finite measures $\mu$ on $\R^{d}$ such that there exist $A,B>0$ satisfying, for all $u\in \R^{d}$,
$\int\exp \left(u^{T}x\right) \rmd \mu (x)
\leq A \exp \left( B \|u\|^{\rho}\right)$,
where for a vector $u$ in a Euclidian space, $\|u\|$ denotes its Euclidian norm and $u^{T}$ denotes its transpose vector.
If $K$ is such that $\mu_{K} \in {\M_{\rho}}$ for some $\rho$, then the function $\Phi_{R_{K}}$ defined  for  $(z_{1},z_{2})\in\C^{d}\times \C^{d}$ by
$\Phi_{R_{K}}(z_{1},z_{2})=
\int  \exp \left(z_{1}^{T}x_{1} +z_{2}^{T}x_{2}\right) \rmd R(x_{1}, x_{2})
$
is well defined over $\C^{d}\times \C^{d}$. 
Consider the following assumption.
\begin{hypH}
\label{assum::}
For any $z_{0}\in \C^{d}$, 
$
z \mapsto \Phi_{R_{K}}(z_{0},z)
$
is not the null function or
$
z \mapsto \Phi_{R_{K}}(z,z_{0})
$
is not the null function.
\end{hypH}
An alternative equivalent formulation of Assumption H\ref{assum::} is the following: 
$
{\text{for any }}z_{0}\in \C^{d}, \;
\mathbb{E}[  \rme^{z_{0}^{T}X_{2}} \vert X_{1} ] \neq 0
{\text{ or }}
\mathbb{E}[\rme^{z_{0}^{T}X_{1}} \vert X_{2}] \neq 0$. Throughout this paper, the assertion \emph{$R_K = R_{\tilde K}$  and $\noisedist = \noisedistbis$ up to translation} means that there exists $m\in  \R^{d}$ such that if  $(X_1,X_2)$ has distribution $R_K$ and $(\varepsilon_{1},\varepsilon_2)$ has distribution $\noisedist\otimes \noisedist$, then $(X_{1}-m,X_2-m)$ has distribution $ R_{\tilde K}$ and  $(\varepsilon_{1}+m,\varepsilon_{2}+m)$ has distribution $\noisedistbis\otimes \noisedistbis
$. The following theorems state that the distribution of the observations allows to recover the kernel of the hidden Markov chain and the distribution of the noise up to translation.

\begin{theo}
\label{theoident1}
Assume that  $K$ (resp. $\widetilde K$) is a transition kernel on $\R^d\times \mathcal{B}(\R^d)$ with a unique stationary distribution $\mu_K$ (resp. $\mu_{\widetilde K}$) and that $R_{K}$ and $R_{\widetilde K}$ satisfy assumption H\ref{assum::}.
Assume also that there exists $\rho <2$ such that $\mu_{K}\in \M_{\rho}$ and $\mu_{\widetilde K}\in \M_{\rho}$. Then, $\po_{{K},P}=\po_{{\widetilde K},\widetilde P}$ implies that $R_{K} =R_{\widetilde K}$ and $\noisedist = \noisedistbis$ up to translation.
\end{theo}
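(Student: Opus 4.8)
The plan is to work with the characteristic functions and exploit the analyticity that the light-tail condition $\mu_K, \mu_{\widetilde K} \in \M_\rho$ provides. Write $\varphi_X$ for the characteristic function of the bivariate vector $(X_1,X_2)$ under $R_K$, $\varphi_{\widetilde X}$ for its analogue under $R_{\widetilde K}$, and $\psi, \widetilde\psi$ for the characteristic functions of $\varepsilon_1$ under $P$ and $\widetilde P$. Since $Y_i = X_i + \varepsilon_i$ with $\varepsilon_1,\varepsilon_2$ i.i.d.\ and independent of $(X_1,X_2)$, equality of the laws of the pair $(Y_1,Y_2)$ gives, for all $(t_1,t_2)\in\R^d\times\R^d$,
\[
\varphi_X(t_1,t_2)\,\psi(t_1)\,\psi(t_2) \;=\; \varphi_{\widetilde X}(t_1,t_2)\,\widetilde\psi(t_1)\,\widetilde\psi(t_2)\eqsp.
\]
First I would note that the condition $\mu_K\in\M_\rho$ with $\rho<2$ implies that $\varphi_X$ extends to an entire function on $\C^d\times\C^d$ of order $\rho<2$ in each pair of variables (and likewise $\varphi_{\widetilde X}$), whereas $\psi,\widetilde\psi$ are bounded on $\R^d$; the identity above is the bridge between a rigid analytic object and a merely bounded one.

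Next I would localize: fix $t_2$ and regard both sides as functions of $t_1$ alone. The key trick is to show that $\psi$ cannot vanish on a set large enough to matter — more precisely, to use the nonvanishing of $\varphi_X$ (Assumption H\ref{assum::}, in the form $\E[\rme^{z_0^T X_2}\mid X_1]\neq 0$ or $\E[\rme^{z_0^T X_1}\mid X_2]\neq 0$ for every $z_0$) to propagate information. Choose $z_0\in\C^d$ with $\varphi_X(z_0,\cdot)\not\equiv 0$ (such $z_0$ exists, say $z_0=0$, since $\varphi_X(0,0)=1$, but the assumption is needed at other values). Then on the real line the ratio $\varphi_{\widetilde X}(t_1,t_2)/\varphi_X(t_1,t_2) = \psi(t_1)\psi(t_2)/(\widetilde\psi(t_1)\widetilde\psi(t_2))$, wherever denominators are nonzero, must — after a careful argument — be of the form $\exp(\text{affine})$: one writes the bivariate identity, takes it at $(t_1,0)$, $(0,t_2)$, and $(t_1,t_2)$, and forms the quotient $\varphi_X(t_1,t_2)\varphi_X(0,0)/[\varphi_X(t_1,0)\varphi_X(0,t_2)]$, which by the product structure on the noise side equals the same quotient for $\varphi_{\widetilde X}$. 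Hence $g(t_1,t_2) := \log[\varphi_X(t_1,t_2)\varphi_X(0,0)/(\varphi_X(t_1,0)\varphi_X(0,t_2))]$ and its tilde-analogue agree wherever defined; the point of the light-tail hypothesis is that this quotient is entire of order $<2$, and a Hadamard-type / Marcinkiewicz-type argument (a polynomial of degree $<2$ that is a characteristic-function exponent is affine) will force $\psi/\widetilde\psi$ to be $\exp$ of an affine function, i.e.\ $\widetilde\psi(t) = \rme^{\mathrm{i} m^T t}\psi(t)$ for some $m\in\R^d$. That identifies $P$ and $\widetilde P$ up to translation, and plugging back in identifies $R_K$ and $R_{\widetilde K}$ up to the same translation, which is exactly the claimed conclusion (and it then passes from $(X_1,X_2)$-laws to the kernels since $R_K$ determines $K$ and $\mu_K$).

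The main obstacle — and the place where Assumption H\ref{assum::} and the exponent $\rho<2$ really earn their keep — is controlling the zeros. The clean quotient manipulation above is only valid where $\varphi_X(t_1,0)$, $\varphi_X(0,t_2)$, $\widetilde\psi(t_1)$, $\widetilde\psi(t_2)$ are all nonzero, and a priori these can vanish on large real sets; characteristic functions of, say, uniform or lattice-type laws have many zeros. The way around this is to not restrict to the real axis: because $\varphi_X,\varphi_{\widetilde X}$ are entire of finite order, their zero sets in $\C^d$ are thin, one can move $t_1$ or $t_2$ into the complex domain to escape the zeros, establish the functional identity for $g$ on an open subset of $\C^d\times\C^d$, and then extend by analytic continuation. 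Assumption H\ref{assum::} is precisely what guarantees that, for each fixed complex $z_0$, one of the two slices $z\mapsto\varphi_X(z_0,z)$, $z\mapsto\varphi_X(z,z_0)$ is not identically zero, so that the continuation does not run into an identically-zero factor and the division can always be carried out on a suitable slice. Making this rigorous — choosing the slices uniformly, handling the symmetric "or" in H\ref{assum::}, and invoking the right complex-analytic uniqueness theorem (identity theorem in several variables, plus a Phragmén–Lindelöf or Marcinkiewicz-type growth bound to kill higher-order terms) — is the technical heart of the proof; everything after "$\widetilde\psi = \rme^{\mathrm{i} m^T t}\psi$" is bookkeeping.
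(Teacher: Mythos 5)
Your overall strategy coincides with the paper's: cancel the unknown noise characteristic functions on a neighbourhood of the origin, extend by analyticity to the identity~\eqref{analyticmain} on all of $\C^{d}\times\C^{d}$ (your quotient identity is exactly this equation, though note the quotient itself is only meromorphic, so the continuation must be carried out on the cross-multiplied, entire form), use H\ref{assum::} to compare zero sets slice by slice, and invoke Hadamard's factorization with the order-$<2$ bound coming from $\M_{\rho}$. However, two steps are genuinely missing or wrong as stated. First, no Hadamard or Marcinkiewicz-type argument can ``force $\psi/\widetilde\psi$ to be the exponential of an affine function'': the noise characteristic functions are merely continuous and bounded, not entire, and their zero sets may even contain open sets (P\'olya-type characteristic functions vanish on half-lines), so no factorization theorem applies to them. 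The factorization has to be applied to the entire functions $\Phi_{R_K}(\cdot,0)$ and $\Phi_{R_{\tilde K}}(\cdot,0)$, giving $\Phi_{R_K}(\cdot,0)=\rme^{S}\Phi_{R_{\tilde K}}(\cdot,0)$; only after $S$ is shown to equal $m^{T}z$ with $m\in\R^{d}$ does one recover the noise, via $\phi(t)=\rme^{-im^{T}t}\widetilde\phi(t)$ on the set where $\Phi_{R_K}(it,0)\neq 0$ (whose complement has empty interior because this analytic function is not identically zero) and then everywhere by continuity. Your order of deduction --- noise first, then ``plug back in'' --- cannot be run, because nothing constrains the noise analytically.

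Second, and this is the real technical core when $d>1$, the variable-by-variable Hadamard step only yields an exponent $S$ that is polynomial of degree at most $1$ \emph{in each variable separately}; mixed monomials such as $u_{1}u_{2}$ survive this step, are compatible with order $<2$ in each single variable, and would destroy the ``up to translation'' conclusion. Ruling them out is not bookkeeping and is not achieved by a Phragm\'en--Lindel\"of or Marcinkiewicz argument (Marcinkiewicz's theorem concerns characteristic functions of probability laws, whereas here the exponential multiplies a two-sided Laplace transform). The paper devotes a dedicated argument to this: it first patches the one-variable factorizations into a single polynomial $S$ of degree at most $1$ in each coordinate, then uses that $\mu_{\tilde K}$ is not a Dirac mass at $0$ to lower bound $\Phi_{R_{\tilde K}}(u,0)$ along real directions, and shows that a surviving $u_{i}u_{j}$ term would make $\Phi_{R_K}(\cdot,0)$ grow along a real ray faster than the bound $A\rme^{B\|u\|^{\rho}}$, $\rho<2$, allowed by $\mu_{K}\in\M_{\rho}$ --- a contradiction. (One also needs the zero sets to be matched with multiplicities, and the symmetric ``or'' in H\ref{assum::} to be exploited at every zero of a slice, which your sketch does gesture at correctly.) Without this exclusion step your argument only identifies $R_{K}$ and $R_{\tilde K}$ up to a multi-affine exponential factor, which is strictly weaker than the theorem.
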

In the case of real valued random variables, identifiability holds for a   class of transition kernels including Gaussian Markov chains.

\begin{theo}[case $d=1$]
\label{theoident2}
Assume that $K$ (resp. $\widetilde K$) is a transition kernel on $\R\times \mathcal{B}(\R)$ with a unique stationary distribution $\mu_K$ (resp. $\mu_{\widetilde K}$) and with a density with respect to the Lebesgue measure. Assume that there exists $\rho <3$ such that $\mu_{K}\in \M_{\rho}$ and $\mu_{\widetilde K}\in \M_{\rho}$. Assume that $R_{K}$ and $R_{\widetilde K}$ satisfy assumption H\ref{assum::}. Assume moreover that if the stationary Markov chain with transition kernel $K$ (resp. $\widetilde K$) is Gaussian, it is not a sequence of independent and identically distributed variables. Then, $\po_{K,\noisedist}=\po_{{\widetilde K},\noisedistbis}$ implies that $R_{K} = R_{\widetilde K}$ and $\noisedist = \noisedistbis$ up to translation.
\end{theo}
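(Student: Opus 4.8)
The plan is to follow the strategy behind Theorem~\ref{theoident1} and then to use the one-dimensional structure together with the Gaussian exclusion to remove the residual factor. First I would reduce the problem to a functional equation for a single complex multiplier. Since $\varepsilon_1,\varepsilon_2$ are independent and independent of $(X_1,X_2)$, the identity $\po_{K,\noisedist}=\po_{\widetilde K,\noisedistbis}$ gives, for real $t_1,t_2$,
\[
\Phi_{R_{K}}(\mathrm{i}t_1,\mathrm{i}t_2)\,\widehat{\noisedist}(t_1)\widehat{\noisedist}(t_2)=\Phi_{R_{\widetilde K}}(\mathrm{i}t_1,\mathrm{i}t_2)\,\widehat{\noisedistbis}(t_1)\widehat{\noisedistbis}(t_2)\eqsp,
\]
where $\widehat{\noisedist}$ is the characteristic function of $\noisedist$. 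Taking $t_2=0$ and $t_1=0$ and using stationarity (so that $\Phi_{R_{K}}(z,0)=\Phi_{R_{K}}(0,z)=\int\rme^{zx}\rmd\mu_K(x)$) lets one eliminate $\widehat{\noisedist}$ and $\widehat{\noisedistbis}$ in a neighbourhood of the origin; since $\Phi_{R_K}$ and $\Phi_{R_{\widetilde K}}$ are entire on $\C^2$, analytic continuation then yields
\[
\Phi_{R_{K}}(z_1,z_2)=\phi(z_1)\phi(z_2)\,\Phi_{R_{\widetilde K}}(z_1,z_2)\eqsp,\qquad \phi:=\Phi_{R_{K}}(\cdot,0)/\Phi_{R_{\widetilde K}}(\cdot,0)\eqsp,
\]
a priori with $\phi$ meromorphic. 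The same computation applied to $(Y_1,Y_{1+j})$, and more generally to $(Y_1,\dots,Y_j)$, gives the analogous identities for the joint law of $(X_1,\dots,X_j)$ with the \emph{same} function $\phi$ appearing coordinatewise; this will be used later.

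Next I would identify $\phi$. A pole of $\phi$ at some $z^{\star}$ would, by comparing orders of poles in the previous identity, force $\Phi_{R_{\widetilde K}}(z^{\star},\cdot)\equiv0$ and $\Phi_{R_{\widetilde K}}(\cdot,z^{\star})\equiv0$, contradicting H\ref{assum::} for $R_{\widetilde K}$; a zero of $\phi$ at $z^{\star}$ would force $\Phi_{R_{K}}(z^{\star},\cdot)\equiv0\equiv\Phi_{R_{K}}(\cdot,z^{\star})$, contradicting H\ref{assum::} for $R_{K}$. Hence $\phi=\rme^{g}$ with $g$ entire, $g(0)=0$, and $g$ real on $\R$ because $\phi>0$ on $\R$. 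Restricting the functional equation to the diagonal gives $\phi(z)^2=\Phi_{R_{K}}(z,z)/\Phi_{R_{\widetilde K}}(z,z)$, a quotient of two entire functions of order at most $\rho$ (by Cauchy--Schwarz and $\mu_K,\mu_{\widetilde K}\in\M_\rho$); being entire, such a quotient has order at most $\rho$, so $\phi$ has finite order, and therefore $g$ is a polynomial with $\deg g\le\rho<3$. Thus $g(z)=mz+bz^{2}$ with $m,b\in\R$. (In the setting of Theorem~\ref{theoident1} one has $\rho<2$, hence $\deg g\le1$, i.e.\ $b=0$, and the proof ends here; the whole point of the present statement is to handle the term $bz^{2}$.)

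The hard part is to show $b=0$. Replacing $\widetilde K$ by a translate we may assume $m=0$, and exchanging the roles of $K$ and $\widetilde K$ (which changes $\phi$ into $1/\phi$, hence $b$ into $-b$) we may assume $b>0$. Then $\rme^{-b(t_1^2+\dots+t_j^2)}$ is the characteristic function of $j$ independent $\mathcal N(0,2b)$ variables, and the coordinatewise relations from the first step say exactly that, for every $j$, the law of $(X_1,\dots,X_j)$ is that of $(\widetilde X_1,\dots,\widetilde X_j)$ convolved with an independent i.i.d.\ $\mathcal N(0,2b)$ vector; equivalently $(X_k)_{k\ge1}\overset{d}{=}(\widetilde X_k+N_k)_{k\ge1}$, with $(N_k)$ i.i.d.\ $\mathcal N(0,2b)$, independent of the chain $(\widetilde X_k)$ of kernel $\widetilde K$. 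So $(X_k)$ is a Markov chain which is at the same time the observation process of a hidden Markov model with latent chain $(\widetilde X_k)$ and nondegenerate Gaussian emissions. Here I would use the assumption that $\widetilde K$ has a density to conclude, by a rigidity argument: the Markov property of $(X_k)$ forces the conditional law of $\widetilde X_2$ given $(X_1,X_2)$ to coincide with its conditional law given $X_2$ alone; injectivity of the operators involved (ensured by the density of $\widetilde K$ and of the Gaussian) then gives that $X_1$ is independent of $\widetilde X_2$, hence that $\widetilde X_1$ and $\widetilde X_2$ are independent, hence that $(\widetilde X_k)$ is i.i.d. But then H\ref{assum::} for $R_{\widetilde K}=\mu_{\widetilde K}\otimes\mu_{\widetilde K}$ says that $z\mapsto\int\rme^{zx}\rmd\mu_{\widetilde K}(x)$ is a zero-free entire function of order $<3$, hence a (possibly degenerate) Gaussian Laplace transform, so $\mu_K=\mathcal N(0,2b)\ast\mu_{\widetilde K}$ is Gaussian and $(X_k)$ is an i.i.d.\ Gaussian sequence --- contradicting the standing assumption that the chain of $K$, if Gaussian, is not i.i.d. Hence $b=0$, $\phi(z)=\rme^{mz}$, and this is precisely the statement that $R_{K}=R_{\widetilde K}$ and $\noisedist=\noisedistbis$ up to translation by $m$. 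I expect the rigidity step --- a Markov chain that also carries a Gaussian hidden-Markov structure must have an i.i.d.\ latent chain --- to be the main obstacle, and the transition-density assumption to be exactly what makes it work; everything else is an adaptation of the proof of Theorem~\ref{theoident1} together with standard facts on quotients of entire functions of finite order.
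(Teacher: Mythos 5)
Your route up to the quadratic factor is essentially the paper's and is sound: you obtain $\Phi_{R_K}(z_1,z_2)=\rme^{S(z_1)+S(z_2)}\Phi_{R_{\widetilde K}}(z_1,z_2)$ with $S$ real of degree at most $2$ by showing that the ratio $\Phi_{R_K}(\cdot,0)/\Phi_{R_{\widetilde K}}(\cdot,0)$ is entire, zero-free and of finite order (the paper instead matches zero sets and applies Hadamard's theorem; both work), your $j$-dimensional version of the identity is a legitimate and in fact welcome way to justify the process-level equality $(X_k)_{k\geq 1}\overset{d}{=}(\widetilde X_k+N_k)_{k\geq 1}$, which the paper asserts more tersely from the two-dimensional identity, and your endgame (latent chain i.i.d.\ plus H\ref{assum::} forces a zero-free Laplace transform of order $<3$, hence an i.i.d.\ Gaussian chain, contradicting the hypothesis) is the paper's.

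The genuine gap is the rigidity step, which is exactly the content of the paper's Lemma~\ref{lem:gaussMarkov}, and your proposed argument for it does not work. From the Markov property of $(X_k)$ you can only deduce that $\mathcal{L}(\widetilde X_2\mid X_1,X_2)$ and $\mathcal{L}(\widetilde X_2\mid X_2)$ have the same image under the operator $\mu\mapsto\int (\widetilde K(u,\cdot)*\mathcal{N}(0,2b))\,\rmd\mu(u)$, since the future enters only through $X_3=\widetilde X_3+N_3$. The Gaussian convolution is injective, but the operator $\mu\mapsto\mu\widetilde K$ is not injective in general, and the existence of a transition density gives no injectivity at all (a kernel $\widetilde K(x,\cdot)$ taking only finitely many values as $x$ varies, or a constant kernel, has a density and a massively non-injective operator); nor can you add injectivity as a hypothesis, because the non-injective kernels are precisely among those the lemma must handle. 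So the claim that Markovianity ``forces the conditional law of $\widetilde X_2$ given $(X_1,X_2)$ to coincide with its conditional law given $X_2$ alone'' is unjustified. (The next step is also mislabelled: conditional independence given $X_2$ does not by itself give $X_1\perp\widetilde X_2$ --- any $\sigma(X_2)$-measurable variable is a counterexample --- though in the additive model that step can be repaired by cancelling the positive Gaussian factor in the densities; the real obstruction is the previous one.) The paper's Lemma~\ref{lem:gaussMarkov} avoids inverting the transition operator altogether: writing $g_k$ for the density of $k$ consecutive observations and $\varphi$ for the Gaussian density, it encodes the Markov property of the observed chain as $g_3(y_1,y_2,y_3)g_1(y_2)=g_2(y_1,y_2)g_2(y_2,y_3)$, strips the Gaussians in $y_1,y_3$, and then, crucially, uses completeness of the Gaussian location family in $y_2$ together with the identity $\varphi(y_2-x_2)\varphi(y_2-x_4)=\varphi(\sqrt{2}\,[y_2-(x_2+x_4)/2])\,\varphi((x_2-x_4+m)/\sqrt{2})$ and a change of variables to decouple the two hidden states attached to $y_2$, concluding that $(X_2,\widetilde X_2,X_3)$ and $(X_2,\widetilde X_2,\widetilde X_3)$ (with an independent copy) have the same law, hence that two consecutive hidden states are independent. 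This Gaussian-specific decoupling is the missing idea; without it, or a substitute for it, your proof of the key lemma does not go through.
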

One way to fix the ``up to translation" indeterminacy when the noise has a first order moment is to assume that $\mathbb{E}[\varepsilon_{1}]=0$.
Detailed proofs of Theorems~\ref{theoident1} and~\ref{theoident2} can be found in Appendix~\ref{sec:proof:ident1}.

\bigskip \noindent
\emph{Comments on the assumptions.}
\begin{enumerate}[i)]
\item The assumption that $\mu_{K}\in \M_{\rho}$ is an assumption on the tails of the distribution $\mu_{K}$. If $\mu_{K}$ is compactly supported, then $\mu_{K}\in \M_{1}$, and if a probability distribution is in $\M_{\rho}$ for some $\rho$, then $\rho \geq 1$ except in case it is a Dirac mass at point $0$. The assumption $\rho <2$ means that $\mu_{K}$ is required to have tails lighter than that of Gaussian distributions. 

\item The most striking result is that there is no assumption at all on the distribution of the noise, it could have any distribution (including the possibility of a deterministic noise). In particular, there is no assumption on the set where its  characteristic function vanishes. In addition, there is no density or singularity assumption on the distribution of the hidden signal. The hidden sequence may have atomic distributions, continuous distributions, and no specific knowledge about this is required. The only assumptions are on the tail of the signal distribution and assumption H\ref{assum::} which, as discussed below, is a dependency assumption. In contrast, in previous works such as \cite{Wilhelm2015},  \cite{MR2970334} or \cite{MR2374986}, part of the observations and hidden variables  are assumed to have densities (with boundedness or tail assumptions), and some assumptions require invertibility of operators which in the context of this paper translate to non vanishing of the characteristic function of the noise. The completeness assumption ID4 in \cite{Wilhelm2015} implies H\ref{assum::} if the hidden variables are bounded. 

\item {\bf Assumption H\ref{assum::}}. Hadamard's factorization theorem states that entire functions are completely determined by their set of zeros up to a multiplicative indeterminacy which is the exponential of a polynomial with degree at most the exponential growth of the function (here $\rho$). If  $\mu_{K}\in \M_{\rho}$ for some $\rho <2$, then a consequence of Hadamard's factorization theorem (arguing variable by variable)  is that $\Phi_{R_K}\left(\cdot,0\right)$ has no zeros if and only if $\mu_{K}\in \M_{\rho}$ is a dirac mass. 
A simple example in which Assumption H\ref{assum::} holds is when the Markov chain is a an autoregressive process, that is there exists a function $h$ and a sequence of i.i.d. centered random variables $\eta_{i}$, $i\geq 1$, such that for all integer $i$, $X_{i+1}=h(X_{i})+\eta_{i}$. Indeed in this case, for all $z_{1}\in\C^{d}$ and $z_{2}\in\C^{d}$,
$\Phi_{R_K}\left(z_{1},z_{2}\right)=\E \exp (z_{1} X_{1}+ z_{2} h(X_{1})) \E \exp z_{2} \eta_{1}$, so that for any $z_{1}$, there exists $z_{2}$ such that $\Phi_{R_K}\left(z_{1},z_{2}\right)\neq 0$ and Assumption H\ref{assum::} holds.

Now, if the variables $X_{i}$, $i\geq 1$, are independent, then for all $z_{1}\in\C^{d}$ and $z_{2}\in\C^{d}$,
$\Phi_{R_K}\left(z_{1},z_{2}\right)=\Phi_{R_K}\left(z_{1},0\right)\Phi_{R_K}\left(0,z_{2}\right)$. But if  $X_{i}$ is not deterministic, then the function $\Phi_{R_K}\left(\cdot,0\right)=\Phi_{R_K}\left(0,\cdot\right)$ has zeros, and Assumption H\ref{assum::} does not hold.
In other words, Assumption H\ref{assum::} implies that the variables $X_{i}$, $i\geq 1$ are not independent except if they are deterministic.
When the hidden variables have a finite support set of cardinality $2$, Assumption H\ref{assum::} is even equivalent to the fact that $X_{1}$ and $X_{2}$ are not independent.
\end{enumerate}

\bigskip \noindent
\emph{Outline of the proofs.}
The strategy is to write the characteristic functions of the distribution of two consecutive observations under $\po_{K,\noisedist}$ and $\po_{{\widetilde K},\noisedistbis}$, and to derive an equality involving the characteristic functions of $R_{K}$ and  $R_{\widetilde K}$  in a  neighborhood of the origin using the fact that, in such a neighborhood, the characteristic function of the noise distribution is nonzero both under $\noisedist$ and $\noisedistbis$. Then, the assumption that $\mu_{K}\in \M_{\rho}$ for some $\rho$ is used in two main steps.
\begin{enumerate}[i)]
\item The first step is to extend the equality that holds between characteristic functions in a neighborhood of the origin into an equality on $\C^{d}\times \C^{d}$, that is for any $z_{1}\in\C^{d}$ and $z_{2}\in\C^{d}$,
\begin{equation}
\label{analyticmain}
\Phi_{R_K}\left(z_{1},z_{2}\right)\Phi_{R_{\tilde K}}\left(z_{1},0\right)\Phi_{R_{\tilde K}}\left(z_{2},0\right)=\Phi_{R_{\tilde K}}\left(z_{1},z_{2}\right)\Phi_{R_K}\left(z_{1},0\right)\Phi_{R_K}\left(z_{2},0\right)
\eqsp.
\end{equation}
This equation is also the starting point of \cite{gassiat:rousseau:2016}, but dealing with continuous multidimensional state spaces requires further developments. In the proof of Theorem~\ref{theoident1} and Theorem~\ref{theoident2}, this is possible due to the fact that the functions $\Phi_{R_{K}}$ and $\Phi_{R_{\widetilde K}}$ are multivariate analytic functions.
Indeed, it is possible to replace, in the integral defining $\Phi_{R_{K}}$ and $\Phi_{R_{\widetilde K}}$, the exponential term  by its series expansion and interchange sums and integration using dominated convergence since Laplace transforms under $R_{K}$ with  $R_{\widetilde K}$ are finite everywhere, and the remaining in the series may be locally uniformly upper bounded. 
Then, using assumption H\ref{assum::}, it is possible to prove that variable by variable,  $\Phi_{R_{K}}$ and $\Phi_{R_{\widetilde K}}$ have the same sets of zeros in $\C$ (with multiplicity) when all other variables are fixed. 
\item The second step is to use (again variable by variable) Hadamard's factorization theorem  for entire functions, see \cite[Chapter~5, Theorem~5.1]{Stein:complex}, to prove that $\Phi_{R_{K}}$ and $\Phi_{R_{\widetilde K}}$ are equal up to the exponential of a polynomial of degree at most the integer part of $\rho$. This is where the constraint on $\rho$ is used. Indeed, if $\rho <2$, its integer part is $1$. We prove that $\Phi_{R_{K}}$ and $\Phi_{R_{\widetilde K}}$ are equal up to the exponential of a polynomial of degree at most $1$ in each variable (the case $d>1$ requires a careful analysis), from which we deduce that $R_{K} = R_{\widetilde K}$ and $\noisedist = \noisedistbis$ up to translation. In the case where only $\rho <3$ is required, the conclusion is that $\Phi_{R_{K}}$ and $\Phi_{R_{\widetilde K}}$ are equal up to the exponential of a polynomial of degree at most $2$. In this case, we were able to extend the result only for real valued observations, by proving the following lemma which is used to conclude that the polynomial has to be of degree at most $1$.
\begin{lem}
\label{lem:gaussMarkov}
Assume that $(X_{i})_{i\geq 1}$ is a stationary real valued Markov chain with transition kernel having a density with respect to the Lebesgue measure. Assume that $(\eta_{i})_{i\geq 1}$ is a sequence of  independent and identically distributed real valued Gaussian random variables with positive variance and independent of  $(X_{i})_{i\geq 1}$.
If $(X_{i}+\eta_{i})_{i\geq 1}$ is Markov chain, then $(X_{i})_{i\geq 1}$  is an independent and identically distributed sequence.
\end{lem}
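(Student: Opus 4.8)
The strategy is to reduce the assumption that $(Y_i)_{i\geq 1}$ is a Markov chain to a functional identity between Gaussian convolutions, and then to exploit---exactly as in the proofs of Theorems~\ref{theoident1} and~\ref{theoident2}---the analyticity and Hadamard-factorization properties of the associated entire functions. Write $\varphi$ for the (positive) density of $\eta_1$ and $f$ for the stationary density of $(X_i)_{i\geq 1}$, which exists since the transition kernel has a density. The Markov property of $(Y_i)$ gives $Y_1 \perp Y_3 \mid Y_2$, while, conditionally on $X_2$, the chain $(X_i)$ splits and the noises are independent, so that $Y_1, Y_2, Y_3$ are always mutually independent given $X_2$. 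Writing the conditional characteristic functions of $Y_1$, $Y_3$ and $(Y_1,Y_3)$ given $Y_2$ by first conditioning on $X_2$, the noise factor $\E[\rme^{\mathrm{i}s\eta_1}] = \rme^{-\sigma^2 s^2/2}$---which is nonzero---appears on both sides of the conditional-independence identity and cancels. Since the law of $X_2$ given $Y_2 = y$ has density $g_y \propto f(\cdot)\varphi(y-\cdot)$, this yields: for every $y\in\R$ and all bounded $\phi,\psi$, with $U_\phi = \E[\phi(X_1)\mid X_2=\cdot]$ and $W_\psi = \E[\psi(X_3)\mid X_2=\cdot]$,
\[
\E_{g_y}[U_\phi W_\psi] = \E_{g_y}[U_\phi]\,\E_{g_y}[W_\psi],
\]
equivalently, since $\E_{g_y}[h] = (fh*\varphi)(y)/(f*\varphi)(y)$,
\[
(f U_\phi W_\psi *\varphi)(y)\,(f*\varphi)(y) = (f U_\phi *\varphi)(y)\,(f W_\psi *\varphi)(y),\qquad y\in\R.
\]

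For integrable $g$ one has $(g*\varphi)(z) = \rme^{-z^2/2\sigma^2}\Lambda_g(z)$, with $\Lambda_g(z) = (2\pi\sigma^2)^{-1/2}\int g(x)\rme^{zx/\sigma^2 - x^2/2\sigma^2}\rmd x$ entire of order at most $2$. Fixing $\phi,\psi$, writing $u = U_\phi$, $w = W_\psi$, extending the last identity to $\C$ and cancelling the Gaussian factors gives
\[
\Lambda_{fuw}(z)\,\Lambda_f(z) = \Lambda_{fu}(z)\,\Lambda_{fw}(z),\qquad z\in\C.
\]
Taking $\phi,\psi\geq 0$, all four transforms are positive on $\R$ and are $\not\equiv 0$ unless $u$, resp.\ $w$, is $f$-a.e.\ constant. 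The core of the proof is to show that this equality of order-$\leq 2$ entire functions forces $u$ or $w$ to be $f$-a.e.\ constant. Here the Gaussianity of the noise is decisive: $\{g_y\}_{y}$ is the exponential family generated by $f\rme^{-\cdot^2/2\sigma^2}$, ``$u$ is $f$-a.e.\ constant'' is equivalent to ``$y\mapsto\E_{g_y}[u]$ is constant'', and the $\Lambda_g$ are entire of order exactly $2$; one argues by matching the zero sets and the degree-$\leq 2$ polynomial parts in a Hadamard factorization, of the same nature as in Theorems~\ref{theoident1}--\ref{theoident2}. As a concrete handle, when the chain is Gaussian the law $g_y$ is normal with fixed variance and mean ranging over $\R$, the identity becomes a convolution identity between $u$ and $w$ against a fixed Gaussian $\varphi$, and, with $U = u*\varphi$ and $W = w*\varphi$, a Hermite expansion turns it into $\sum_{k\geq 1}U^{(k)}(m)W^{(k)}(m)/k!\equiv 0$, from which the degeneracy follows.

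Granting this step, the conclusion is immediate. If $(X_i)$ were not i.i.d., then $X_1$ and $X_2$ would be dependent, so neither the backward nor the forward conditional law would be degenerate: there would exist bounded $\phi_0,\psi_0\geq 0$ with $U_{\phi_0}$ and $W_{\psi_0}$ both non-constant $f$-a.e., contradicting the previous step applied to $(\phi_0,\psi_0)$. Hence $X_1 \perp X_2$, and together with stationarity and the Markov property this propagates to give that $(X_i)_{i\geq 1}$ is i.i.d.

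I expect the middle step---extracting the degeneracy of $u$ or $w$ from the entire-function identity $\Lambda_{fuw}\Lambda_f = \Lambda_{fu}\Lambda_{fw}$---to be the main obstacle. No completeness or injectivity property of the transition kernel is available (such properties fail precisely for i.i.d.\ chains, the very object one is characterising), so one cannot simply deconvolve; the fact that the noise is Gaussian, hence that all the transforms are entire of order exactly $2$, must be used through Hadamard's factorization theorem, just as in the proofs of the two identifiability theorems.
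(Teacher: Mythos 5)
Your reduction of the Markov property of $(Y_i)$ to the identity $(fU_\phi W_\psi *\varphi)\,(f*\varphi)=(fU_\phi *\varphi)\,(fW_\psi *\varphi)$ on $\R$ (vanishing of the posterior covariance $\mathrm{Cov}_{g_y}(U_\phi,W_\psi)$ for every $y$) is correct, and it is essentially the same starting point as the paper, which obtains the analogous density identity and then removes the convolutions in the outer variables $y_1,y_3$ by completeness of the Gaussian location family. The final paragraph (from ``$X_1\perp X_2$'' to i.i.d.\ via stationarity and the Markov property, and the existence of nonconstant $U_{\phi_0},W_{\psi_0}$ under dependence) is also fine. The problem is the middle step, which you yourself flag as the main obstacle: it is the entire mathematical content of the lemma and your proposal does not prove it. The Hadamard route is not just incomplete, it is doubtful as stated: the relation $\Lambda_{fuw}\Lambda_f=\Lambda_{fu}\Lambda_{fw}$ only says that two products of order-$2$ entire functions coincide; unlike in Theorems~\ref{theoident1}--\ref{theoident2}, there is no assumption (the role played there by H\ref{assum::}) that lets you match the zero sets of the individual factors, the zeros can be distributed between the two factors on each side in uncontrolled ways, and the desired conclusion (``$u$ or $w$ is $f$-a.e.\ constant'') is not of the ``equal up to the exponential of a low-degree polynomial'' type that Hadamard's theorem delivers. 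Even your ``concrete handle'' in the Gaussian-chain case ends with an unproved assertion that the Hermite identity forces degeneracy.

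The paper's proof avoids entire-function theory altogether and never needs the dichotomy ``$U_\phi$ or $W_\psi$ constant for each fixed pair''. After writing the Markov property as a density identity and deconvolving in $y_1$ and $y_3$ (completeness, i.e.\ injectivity of Gaussian convolution), it exploits the Gaussian product identity $\phi(y-x_2)\phi(y-x_4)=\phi\bigl(\sqrt{2}\,[y-(x_2+x_4)/2]\bigr)\,\phi\bigl((x_2-x_4+m)/\sqrt{2}\bigr)$ to deconvolve a second time in $y_2$ after the change of variables $(u,v)=(x_2+x_4,\,x_2-x_4)$, obtaining an identity with free variables $x_1,x_3,u$. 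It then multiplies by an arbitrary positive measurable $H$ and integrates; since the Gaussian weight is everywhere positive, the correspondence $G\leftrightarrow H$ is a bijection on positive measurable functions, and one concludes that $(X_2,\tilde X_2,X_3)$ and $(X_2,\tilde X_2,\tilde X_3)$ have the same law, where $\tilde X$ is an independent copy of the chain; this gives $X_2\perp X_3$ in one stroke. So the decisive use of Gaussianity is through completeness of the location family and the rotation/product identity, not through growth-order and Hadamard factorization; to repair your proposal you would need to replace the sketched Hadamard step by an argument of this kind (e.g.\ symmetrize your identity into $\int f(x)f(x')(u(x)-u(x'))(w(x)-w(x'))\varphi(y-x)\varphi(y-x')\,\rmd x\,\rmd x'=0$ for all $y$ and then run the rotation-plus-deconvolution and arbitrary-test-function argument), which is precisely what the paper does.
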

\end{enumerate}
The proof of Theorem \ref{theoident2} uses the fact that the hidden variables form a Markov chain by using Lemma \ref{lem:gaussMarkov} where the Markovian property is the starting point of the proof. This is not the case for Theorem \ref{theoident1} in which only the dependency assumption H\ref{assum::}  is used. Thus, Theorem \ref{theoident1} can be extended to other dependent observations in which the hidden signal is not necessarily a Markov chain. Consider now model (\ref{eq:model:regresshidden}). Applying Theorem  \ref{theoident1} (with the assumptions on $R$ as defined below instead of on the kernel of the Markov chain) yields the following corollary.
\begin{cor}
\label{cor:reg}
Assume that $(Z_{i})_{i\geq 1}$ is a sequence of stationary random variables such that the distribution of $g(Z_{1})$ is in ${\cal M}_{\rho}$ for some $\rho <2$, and such that the distribution $R$ of 
$(g(Z_{1}), g(Z_{2}))$ satisfies Assumption H\ref{assum::}. Then the application that associates $R$ (in $\M_{\rho}$ for some $\rho <2$ and satisfying H\ref{assum::}) and $P$ (the distribution of the noise) to the distribution of $(Y_{1},Y_{2})$ is one-to-one up to translation.
\end{cor}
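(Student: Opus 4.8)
\emph{Proof proposal.}
The plan is to read Corollary~\ref{cor:reg} off the proof of Theorem~\ref{theoident1}, exploiting the fact that this proof never uses the Markov property of the hidden chain nor the uniqueness of a stationary distribution: those only serve to produce the joint law $R_K$ of two consecutive hidden states together with its common marginal $\mu_K$, and the argument itself only uses the law of two consecutive observations, the membership of that common marginal in $\M_\rho$ for some $\rho<2$, and Assumption H\ref{assum::} for the joint law. First I would observe that, under model~\eqref{eq:model:regresshidden}, the distribution of $(Y_1,Y_2)$ is the convolution $R\ast(P\otimes P)$ on $\R^{2d}$, where $R$ is the law of $(g(Z_1),g(Z_2))$; in particular it depends on $(Z_i)_{i \geq 1}$ only through $R$, so the map in question sends $(R,P)$ to $R\ast(P\otimes P)$ and we must show it is injective up to translation on the stated domain.

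Next I would check that the hypotheses coincide with what the proof of Theorem~\ref{theoident1} actually consumes. By stationarity of $(Z_i)_{i \geq 1}$, both marginals of $R$ equal the law of $g(Z_1)$, which lies in $\M_\rho$ for some $\rho<2$ by assumption; by the Cauchy--Schwarz inequality the Laplace transform of $R$ is then finite on $\R^{2d}$, so $\Phi_{R}$ is well defined and analytic on $\C^d\times\C^d$, and likewise for any competing pair $(\tilde R,\tilde P)$ in the domain. Moreover $R$ and $\tilde R$ satisfy H\ref{assum::} by hypothesis.

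Then I would replay the argument of Theorem~\ref{theoident1} at the level of the two-dimensional marginal. From $R\ast(P\otimes P)=\tilde R\ast(\tilde P\otimes\tilde P)$ one obtains, on a neighbourhood of the origin where the characteristic functions of $P$ and $\tilde P$ do not vanish, an identity between the characteristic functions of $R$ and $\tilde R$; analytic continuation combined with H\ref{assum::}, applied variable by variable, upgrades it to~\eqref{analyticmain} on $\C^d\times\C^d$; Hadamard's factorization theorem, again variable by variable, together with $\rho<2$ (so that the integer part of $\rho$ equals $1$), forces $\Phi_{R}$ and $\Phi_{\tilde R}$ to agree up to the exponential of a polynomial of degree at most $1$ in each variable, and hence $R=\tilde R$ and $P=\tilde P$ up to translation. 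Since no step of this chain uses anything beyond the properties recorded above, the conclusion transfers directly, proving Corollary~\ref{cor:reg}.

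The main thing to watch for, rather than a genuine obstacle, is to make sure the proof of Theorem~\ref{theoident1} is genuinely modular in this sense --- that every step is phrased in terms of $R$, $\tilde R$, $P$, $\tilde P$ and the two properties ``common marginal in $\M_\rho$ with $\rho<2$'' and H\ref{assum::}, not in terms of the full kernels --- and to carry the single translation vector $m$ through the argument so that ``$R=\tilde R$ and $P=\tilde P$ up to translation'' holds with the same $m$, in the sense of the definition stated just before Theorem~\ref{theoident1}.
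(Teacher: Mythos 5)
Your proposal is correct and matches the paper's own route: the paper proves Corollary~\ref{cor:reg} precisely by noting that the proof of Theorem~\ref{theoident1} only uses the law of two consecutive observations, the membership of the common marginal in $\M_{\rho}$ with $\rho<2$, and Assumption H\ref{assum::} for the joint law $R$, so it applies verbatim with $R$ in place of $R_{K}$. Your checks (the law of $(Y_1,Y_2)$ being $R\ast(P\otimes P)$, finiteness of the Laplace transform of $R$ via its marginals, and carrying a single translation vector $m$ through) are exactly the modularity the paper relies on.
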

%
The function $g$ can be known or unknown. If $g$ is unknown but may be recovered from the knowledge of the distribution of $(g(Z_{1}),g(Z_{2}))$, then Corollary~\ref{cor:reg} states that in model (\ref{eq:model:regresshidden}), everything can be recovered based only on $(Y_{i})_{i\geq 1}$ in regression problems with dependent hidden regressors. In particular, Corollary~\ref{cor:reg} extends the identification results of \cite{dumont:lecorff:2017, dumont:lecorff:2017b} to the cases where the distribution of the additive noise is unknown. Numerical experiments in the case where $g: x\mapsto \cos x$ are given in Section~\ref{sec:simu}.


\section{Consistent Estimation}
\label{sec:estimation}

In this section, we propose two different estimation methods. The first one builds directly on the identifying equation (\ref{analyticmain}). It is similar to the method proposed in \cite{gassiat:rousseau:2016} for parametric estimation of the finite dimensional parameter (in their work, the hidden variables take finitely many values). The second estimation method is a likelihood method, which uses the Markov modeling of the hidden variables. The idea is to approximate the  continuous state space by a finite state space obtained by discretization, and to use  penalized likelihood to select automatically the number of points in the approximation and their location. 
Since the likelihood uses the joint distribution of all observations, likelihood estimation should be more accurate when the hidden process is indeed Markovian. Moreover, for hidden Markov models in which the distribution of the hidden variable has distribution in $\M_{\rho}$ for $2\leq \rho <3$, then one has to use an estimation method using the Markovian property. Thus in such a case, one can not use the method in Section \ref{sec:least:squares} but rather the likelihood method.  For both estimation methods, the identifiability theorem is the cornerstone to establish the consistency of the estimator.

\subsection{Using Least Squares for Characteristic Functions}
\label{sec:least:squares}
 In the following, objects related to the true (unknown) distribution $\po^{\star}$ of the observed process are denoted with the superscript $\star$.  Let $\Scal$ be a compact neighborhood of $0$ in $\R^{2d}$, and let $w
: \R^{d} \times \R^{d} \to \R_{+}$ 
be a positive function on $\Scal$. Let $\phi^\star$ be the characteristic function of $\varepsilon_1$.
For any probability distribution $R$ on $\R^{d} \times \R^{d}$, define
\begin{multline*}
M(R)=\int_\Scal
 \left\vert \Phi_{R^{\star}}(it_{1},it_{2})\Phi_{R}(it_{1},0)\Phi_{R}( 0, it_{2})  -
  \Phi_{R}(it_{1},it_{2})\Phi_{R^{\star}}(it_{1},0)\Phi_{R^{\star}}( 0, it_{2})
  \right\vert^{2} \\
  \vert   \phi^{\star}(t_{1})\phi^{\star}(t_{2})   \vert^{2} w(t_{1},t_{2})\rmd t_{1} \rmd t_{2}\eqsp.
\end{multline*}
Under appropriate assumptions, by the proof of Theorem \ref{theoident1}, $M(R)=0$ if and only if $R=R^{\star}=R_{K^{\star}}$ up to translation. Using an estimator $\widehat {\Phi}_{n}$ of the characteristic function of $(Y_{1},Y_{2})$, define an estimator of $M(\cdot)$ by
\begin{equation*}
M_{n}(R) \! = \!\!\int_\Scal
 \! \left\vert   \widehat {\Phi}_{n}(t_{1},t_{2})\Phi_{R}(it_{1},0)\Phi_{R}( 0, it_{2})  -
  \Phi_{R}(it_{1},it_{2})\widehat {\Phi}_{n}(t_{1},0)\widehat {\Phi}_{n}( 0, t_{2})
  \right\vert^{2}  \!\!\!\! w(t_{1},t_{2})\rmd t_{1} \rmd t_{2}\eqsp.
\end{equation*}
Let $\Rcal$ be a set of probability distributions on $\R^{d} \times \R^{d}$  such that for some $\rho <2$,  for all $R\in \Rcal$, both marginal distributions of
$R$ are in $\M_{\rho}$ and $R$ satisfies assumption H\ref{assum::}.
Define
$\widehat{R}_{n}$ as an element of $\Rcal$  satisfying
\begin{equation*}
M_{n}(\widehat{R}_{n}) = \inf_{R \in \Rcal} M_{n}(R).
\end{equation*}
Under the assumptions of Theorem \ref{theoconsi1}, $\widehat{R}_{n}$ exists but may be not uniquely defined because of translation invariance. 
Let $d$ be a distance that metrizes weak convergence on $\Rcal$, and define $Z_{n}(t_{1},t_{2})$ by
$
Z_{n}(t_{1},t_{2})=\sqrt{n} ( \widehat {\Phi}_{n}(t_{1},t_{2})-  \Phi_{R^{\star}}(it_{1},it_{2}) \phi_{1}^{\star}(t_{1})\phi_{2}^{\star}(t_{2}))
$.
\begin{theo}
\label{theoconsi1}
Assume that 
$
\sup_{(t_{1},t_{2})\in \Scal} | Z_{n}(t_{1},t_{2}) | = O_{\po^{\star}}(1)\eqsp.
$
Then,
\begin{equation*}
M(\widehat{R}_{n}) =  O_{\po^{\star}}(n^{-1/2}).
\end{equation*}
If moreover $\Rcal$ is compact for the weak convergence topology and $R^{\star}\in \Rcal$, then
$d(\widehat{R}_{n},\Rcal^{\star})$ tends to $0$ in $\po^{\star}$-probability as $n$ tends to infinity, where $\Rcal^{\star}$ is the set of $R\in \Rcal$ that are equal to $R^{\star}$ up to translation.
\end{theo}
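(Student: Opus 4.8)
The plan is to proceed in two steps. The first step controls the gap between the empirical criterion $M_n$ and the population criterion $M$, uniformly over $\Rcal$, by propagating the stochastic boundedness of $Z_n$; the second step is the usual ``argmin of a uniformly consistent criterion'' argument, using the compactness of $\Rcal$ together with the identifiability content of Theorem~\ref{theoident1}. Recall that $M$ and $M_n$ are translation invariant, so that we may assume $R^\star\in\Rcal$.

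For the first step, I would set $E_n:=\sup_{(t_1,t_2)\in\Scal}|Z_n(t_1,t_2)|$, which is $O_{\po^\star}(1)$ by hypothesis, and use that $\Scal$ is a compact neighbourhood of the origin to assume, without loss of generality, that $\Scal$ is stable under $(t_1,t_2)\mapsto(t_1,0)$ and $(t_1,t_2)\mapsto(0,t_2)$, so that the suprema over $\Scal$ of $|Z_n(t_1,0)|$ and $|Z_n(0,t_2)|$ are also $O_{\po^\star}(1)$. Since $(Y_i)_{i\geq1}$ satisfies \eqref{eq:model:deconvolution}, the characteristic function of $(Y_1,Y_2)$ equals $\Phi_{R^\star}(it_1,it_2)\phi^\star(t_1)\phi^\star(t_2)$, hence $\widehat\Phi_n(t_1,t_2)=\Phi_{R^\star}(it_1,it_2)\phi^\star(t_1)\phi^\star(t_2)+n^{-1/2}Z_n(t_1,t_2)$, and likewise at $(t_1,0)$ and $(0,t_2)$. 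Plugging these three expansions into the bracketed expression of $M_n(R)$ and expanding the products — all factors $\Phi_R(\cdot)$, $\Phi_{R^\star}(\cdot)$, $\phi^\star(\cdot)$ having modulus at most one — the bracketed expression of $M_n(R)$ equals the bracketed expression of $M(R)$, namely $[\Phi_{R^\star}(it_1,it_2)\Phi_R(it_1,0)\Phi_R(0,it_2)-\Phi_R(it_1,it_2)\Phi_{R^\star}(it_1,0)\Phi_{R^\star}(0,it_2)]\phi^\star(t_1)\phi^\star(t_2)$, up to an additive error of modulus at most $\delta_n:=3n^{-1/2}E_n+n^{-1}E_n^2=O_{\po^\star}(n^{-1/2})$, uniformly in $t\in\Scal$ and $R\in\Rcal$. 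As these two bracketed expressions are bounded by $2+\delta_n$ on $\Scal$, the reverse triangle inequality gives $\sup_{R\in\Rcal}|M_n(R)-M(R)|\leq\delta_n(4+\delta_n)\int_\Scal w=O_{\po^\star}(n^{-1/2})$. Since $M(R^\star)=0$ (the bracket vanishes identically at $R=R^\star$), this yields $M_n(R^\star)=O_{\po^\star}(n^{-1/2})$; combining with $M_n(\widehat R_n)\leq M_n(R^\star)$ and the uniform bound at $\widehat R_n$ gives $M(\widehat R_n)\leq M_n(R^\star)+O_{\po^\star}(n^{-1/2})=O_{\po^\star}(n^{-1/2})$, the first conclusion.

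For the second step, assume in addition that $\Rcal$ is weakly compact and $R^\star\in\Rcal$. The map $R\mapsto M(R)$ is continuous on $\Rcal$ for the weak topology: for fixed $t$ each factor $\Phi_R(it_1,it_2)$, $\Phi_R(it_1,0)$, $\Phi_R(0,it_2)$ is a weakly continuous bounded function of $R$, hence so is the integrand, and dominated convergence applies on the compact $\Scal$. Moreover, by the proof of Theorem~\ref{theoident1}, for $R\in\Rcal$ one has $M(R)=0$ if and only if $R\in\Rcal^\star$: indeed $M(R)=0$ forces the bracket to vanish on a neighbourhood of $0$ in $\R^{2d}$ (where $\phi^\star\neq0$ and $w>0$); the Laplace transforms being entire because the marginals of $R$ and $R^\star$ lie in $\M_\rho$, this extends to the identity \eqref{analyticmain} on $\C^d\times\C^d$, and the argument of Theorem~\ref{theoident1} then gives $R=R^\star$ up to translation. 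Now fix $\eta>0$; the set $A_\eta:=\{R\in\Rcal:d(R,\Rcal^\star)\geq\eta\}$ is closed in the compact set $\Rcal$ — $\Rcal^\star$ itself is weakly closed, since a weakly convergent sequence of translates of $R^\star$ must have bounded translation vectors (otherwise mass escapes to infinity), hence converges to a translate of $R^\star$ — so $A_\eta$ is weakly compact, and $M$ is continuous and strictly positive on it, whence $m_\eta:=\min_{A_\eta}M>0$ (with $m_\eta=+\infty$ if $A_\eta=\emptyset$). Therefore $\{d(\widehat R_n,\Rcal^\star)\geq\eta\}\subseteq\{M(\widehat R_n)\geq m_\eta\}$, and since $M(\widehat R_n)\to0$ in $\po^\star$-probability by the first conclusion, $\po^\star(d(\widehat R_n,\Rcal^\star)\geq\eta)\to0$. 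As $\eta>0$ is arbitrary, $d(\widehat R_n,\Rcal^\star)\to0$ in $\po^\star$-probability.

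The main obstacle is the first step: one must carefully line up the single process $Z_n$ with the three evaluations $\widehat\Phi_n(t_1,t_2)$, $\widehat\Phi_n(t_1,0)$, $\widehat\Phi_n(0,t_2)$ (which is why some mild stability of $\Scal$ is invoked) and track how the $O_{\po^\star}(n^{-1/2})$ errors propagate through products of modulus-one factors. The rest is a routine compactness argument whose only substantive ingredient — the equivalence $M(R)=0$ iff $R\in\Rcal^\star$ — is imported from the \emph{proof} of Theorem~\ref{theoident1} rather than its statement, since before analytic continuation only the finite-dimensional bracket identity on $\Scal$ is directly available.
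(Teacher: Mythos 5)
Your proof is correct and follows essentially the same route as the paper: the same uniform $O_{\po^\star}(n^{-1/2})$ bound on $\sup_{R\in\Rcal}|M_n(R)-M(R)|$ obtained by expanding $\widehat\Phi_n$ around the true characteristic function and using that all factors have modulus at most one, then the basic inequality $M(\widehat R_n)\leq M_n(R^\star)+O_{\po^\star}(n^{-1/2})$ with $M(R^\star)=0$, and finally weak continuity of $M$, compactness of $\Rcal$ and the identifiability fact $M(R)=0\iff R\in\Rcal^\star$ (imported from the proof of Theorem~\ref{theoident1}). The only difference is cosmetic: where the paper invokes Theorem 5.7 of van der Vaart, you spell out the well-separation/compactness argument directly, and you make explicit the minor point about controlling $Z_n(t_1,0)$ and $Z_n(0,t_2)$ which the paper leaves implicit.
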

In the parametric setting, the $n^{-1/2}$ rate on $M(\widehat{R}_{n})$ leads to a $n^{-1/2}$ rate on the parameter as proved  in \cite{gassiat:rousseau:2016}, where the main point  is to prove that the Hessian matrix of the criterion  is non singular at the true (unknown) parameter. However, obtaining nonparametric rates on $\widehat{R}_{n}$ from rates on $M(\widehat{R}_{n})$  is much more difficult, since in this infinite dimensional setting,  the Hessian operator can not have continuous inverse.

Note that consistency with respect to some topology is a consequence of the continuity of $M$ and the compactness of  $\cal R$ in the same topology. Consistency in other topologies could be derived under other assumptions. The proof of Theorem~\ref{theoconsi1} is postponed to  Appendix~\ref{sec:proof:ident1} for completeness.

\bigskip \noindent

\noindent {\em Comments on the assumptions of Theorem~\ref{theoconsi1}.}
\begin{enumerate}[i)]
\item \emph{Assumption on $Z_{n}$.} If $\widehat {\Phi}_{n}$ is the empirical estimator, then this assumption holds as soon as the hidden Markov chain is strongly mixing, see for instance \cite{doukhan:massart:rio:1994} and \cite{doukhan:massart:rio:1995}.
\item \emph{The marginals of each $R \in \Rcal$ are in $\Mcal_\rho$.}
For any positive $\rho$, $A$, $B$ and any positive integer $d'$, let $\M_{\rho,d'}(A,B)$ be the set of finite measures $\mu$ on $\R^{d'}$ such that for all $u\in \R^{d'}$,
$\int\exp \left(u^{T}x\right) \rmd \mu (x)
\leq A \exp \left( B \|u\|^{\rho}\right)$. 
For any $\rho>0$, $A > 0$ and $B > 0$, $\Mcal_{\rho,2d}(A,B)$ is compact for the weak convergence topology and for each distribution $R$ on $\R^d \times \R^d$ such that $R \in \Mcal_{\rho,2d}(A,B)$, both marginal distributions of $R$ are in $\Mcal_\rho$. Thus, it is enough to choose $\Rcal \subset \Mcal_{\rho,2d}(A,B)$ for some $\rho$, $A$ and $B$. In this case, the closure of $\Rcal$ is compact and still a subset of $\Mcal_{\rho,2d}(A,B)$.
\item \emph{All elements of $\Rcal$ satisfy assumption H\ref{assum::}.} 
A way to ensure this is to assume that there exists $\majZ > 0$ and $\underline{\varphi} : \C^d \rightarrow \R_+^*$ such that
\begin{equation}
\label{eq_strong_H1}
\forall R \in \Rcal, \quad
\forall z_0 \in \C^d, \quad
	\sup_{z_1 \in \C^d, \|z_1\| \leq \majZ} |\Phi_R(z_0,z_1)| \vee |\Phi_R(z_1,z_0)| \geq \underline{\varphi}(z_0) \eqsp.
\end{equation}
Note that since if $(R_n)_{n\geqslant 1}$ is a sequence of distributions in $\Mcal_{\rho,2d}(A,B)$ that converges to $R$ in distribution, then $\Phi_{R_n}$ converges to $\Phi_R$ uniformly over all compacts of $(\C^d)^2$ (because the set of functions $\{ \Phi_R : R \in \Mcal_{\rho,2d}(A,B)\}$ is pointwise equicontinuous),
the closure of $\Rcal \cap \Mcal_{\rho,2d}(A,B)$ also satisfies equation~\eqref{eq_strong_H1}.

Hence, the largest subset $\Rcal$ of $\Mcal_{\rho,2d}(A,B)$ (for some $\rho \in (0,2)$, $A>0$ and $B>0$) that contains only probability distributions satisfying equation~\eqref{eq_strong_H1} is compact for the weak convergence topology, each of its elements satisfies H\ref{assum::}, and provided that the parameters $\rho$, $A$, $B$, $\majZ$ and $\underline{\varphi}$ are suitable, it contains $R^\star$, thus it satisfies the assumptions of Theorem~\ref{theoconsi1}.
\end{enumerate}

\subsection{Using Maximum Likelihood}
\label{sec:mle}

In this section we fully exploit the Markovian structure of the latent variables.
Using the fact that continuous distributions may be approximated by discrete distributions, we consider finite state space hidden Markov models and the associated maximum likelihood estimator. The idea is to replace the (continuous) support of the hidden process  by a finite support. Increasing the number of  support points reduces the approximation error (the bias) while increasing the estimation error. Thus, a careful bias-variance trade-off has to be performed to obtain consistent estimators. We propose a penalized likelihood estimator that automatically selects the number of support points. Its consistency is obtained thanks to the identifiability Theorem \ref{theoident1} and to the oracle inequality proved in \cite{lehericy2018misspe}, Theorem~6. 

We assume in this section that the hidden process $(X_i)_{i \geq 1}$ takes values in a known compact set $\Lambda = [-L,L]^d \subset \R^d$ and that the distribution of the noise  is absolutely continuous with respect to the Lebesgue measure on $\R^d$. Denote by $K^{\star}$ the transition kernel of the hidden process, and by $\gamma^{\star}$ the density of the noise with respect to the Lebesgue measure. Since the compact is known, all possible kernels $K$ are such that $\mu_K \in \Mcal_1$. 

Transition kernels on finite sets are described by the number of points $r$ of their support, the vector $\Xfrak = (x_1, \dots, x_r)$ of their support points and the transition matrix $Q$ between these points: for all $(z,z') \in \{1, \dots, r\}^2, \  Q(z,z') = \po(X_1 = x_{z'} | X_0 = x_z)$.
For a vector $\Xfrak \in \Lambda^r$, a transition matrix $Q$ with stationary distribution $\mu_{Q}$ and a density $\gamma$, the log-likelihood of the parameter $(\Xfrak,Q,\gamma)$ given the observations $(Y_i)_{1 \leq i \leq n}$ is
\begin{equation*}
\ell_{n}(\Xfrak,Q,\gamma)=\log \left(\sum_{z_{1},\ldots,z_{n}\in \{1,\ldots,r\}}\!\!\!\!\!\!\!\!\!\!\mu_{Q}(z_{1})\gamma(Y_{1}-x_{z_{1}})\prod_{k=2}^{n}Q(z_{k-1},z_{k})\gamma(Y_{k}-x_{z_{k}})
\right)\eqsp.
\end{equation*}
In this section, a penalized likelihood function is used to perform model selection. Consider a collection of finite dimensional models $(S_{r,D,n})_{r \geq 1, D \geq 1, n \geq 1}$, in which $D$ is a complexity parameter related to the sieve approximation of the nonparametric set in which $\gamma$ lies. Then, for each $r \geq 1$ and $D \geq 1$, the maximum likelihood estimator of model $S_{r,D,n}$ is defined by
\begin{equation*}
(\hat{\Xfrak}_{r,D,n},\hat{Q}_{r,D,n},\hat{\gamma}_{r,D,n})
	\in \argmax_{(\Xfrak,Q,\gamma) \in S_{r,D,n}}
			\frac{1}{n} \ell_n(\Xfrak,Q,\gamma)\eqsp.
\end{equation*}
The number of states and the model dimension are selected using the penalized likelihood:
\begin{equation*}
(\hat{r}_n, \hat{D}_n) \in \argmax_{r \leq \log n, D \leq n}
	\left( \frac{1}{n} \ell_n(\hat{\Xfrak}_{r,D,n},\hat{Q}_{r,D,n},\hat{\gamma}_{r,D,n}) - (D+r^2) \frac{(\log n)^{15}}{n} \right)
\end{equation*}
and the final estimators are defined as
\begin{equation*}
(\hat{\Xfrak}_n, \hat{Q}_n, \hat{\gamma}_n) = \left(\hat{\Xfrak}_{\hat{r}_n, \hat{D}_n, n}, \hat{Q}_{\hat{r}_n, \hat{D}_n, n}, \hat{\gamma}_{\hat{r}_n, \hat{D}_n, n}\right)\eqsp.
\end{equation*}
The specific form of the penalty is chosen according to the theory developed in  \cite{lehericy2018misspe}, but in practice the slope heuristics as in \cite{MR2865029} could be used to calibrate the penalty.

The nonparametric set $\Gamma$ of possible noise densities is described now as a set of nonparametric mixtures. Then, the finite dimensional sieve is given by finite mixtures with at most $D$ support points. 
Let $\Theta$ be a compact subset of $\R^d \times GL_d(\R)$ and $f : y \in \R^d \longmapsto (2\pi)^{-d/2} \exp(- \|y\|^2 / 2)$ be the density of a standard multivariate normal distribution. Write $\Pcal(\Theta)$ the set of probability measures on $\Theta$, let
\begin{equation}
\label{eq_ex_Gamma}
\Gamma = \left\{ \gamma : y \longmapsto \int_\Theta |\det(\Sigma)| f\left( \Sigma(y-\mu) \right) \rmd p(\mu,\Sigma) : p \in \Pcal(\Theta), \int_\Theta \mu \rmd p(\mu, \Sigma) = 0 \right\}
\end{equation}
be the set of densities of location-scale mixtures of $f$ with parameters in $\Theta$. The condition $\int_\Theta \mu \rmd p(\mu, \Sigma) = 0$ ensures that all densities in $\Gamma$ are centered. For $(\mu,\Sigma) \in \Theta$, write $\delta_{\mu,\Sigma}$ the Dirac measure centered on $(\mu,\Sigma)$.
Let $(G_D)_{D \geq 1}$ be  defined for all $D \geq 1$ by
\begin{equation*}
G_D = \left\{ \gamma : y \longmapsto \sum_{i=1}^D p_i \det(\Sigma_i) f\left( \Sigma_i(y-\mu_i) \right) : \sum_{i=1}^D p_i \delta_{(\mu_i, \Sigma_i)} \in \Pcal(\Theta), \sum_{i=1}^D p_i \mu_i = 0 \right\}.
\end{equation*}
%
Transition kernels are understood as functions from $\Lambda$ to $\Pcal(\Lambda)$ endowed with the weak convergence topology. 
For $p\geq 1$,  let $W_p(\mu_1,\mu_2)$ be the Wasserstein distance  between two probability measures  $\mu_{1}$ and $\mu_{2}$ on the same Euclidian space $E$ which is defined as the infimum of  $(\int_{E\times E} \|x-y\|^{p} \pi (\rmd x,\rmd y))^{1/p} $ over the set of probabilities $\pi$ such that $\mu_{1}=\int \pi(\cdot, \rmd y)$ and $\mu_{2}=\int \pi(\rmd x,\cdot)$,
see \cite{MR1619171} or \cite{MR2459454}. Wasserstein distances are convenient to compare probability measures that may be singular to each other and  $W_1$ metrizes the weak convergence topology for compactly supported distributions.
 It is assumed that all kernels used in the proposed procedure share the same modulus of continuity  $\omega$. It is possible to assume that $\omega$ is a concave function with no loss of generality since $\Pcal(\Lambda)$ has finite $W_1$-diameter. Let $C \geq 2$ be a constant.
\begin{hypH}
\label{hyp_transitionKernel}
The application $x \in \Lambda \longmapsto K^\star(x, \cdot) \in (\Pcal(\Lambda), W_1)$ admits the modulus of continuity $\omega/2$ and there exists a probability measure $\lambda^\star$ on $\Lambda$ such that for all $x \in \Lambda$, $K^\star(x, \cdot)$ has a density with values in $[2/C,C/2]$ with respect to $\lambda^\star$.
\end{hypH}
The collection of models $(S_{r,D,n})_{r \geq 1, D \geq 1, n\geq 1}$ used in the maximum likelihood estimation is defined as follows. 
For all $r \geq 1$ and $D \geq 1$, let $\Sbf_{r,D}$ be the set of all $(\Xfrak,Q,\gamma) \in \Lambda^r \times [1/(Cr), C/r]^{r \times r} \times G_D$ such that $Q$ is a transition matrix and the transition kernel $x_z \longmapsto \sum_{z'=1}^r Q(z,z') \delta_{x_{z'}}$ admits the modulus of continuity $\omega$ with respect to $W_1$. 

In order to state the consistency result, a continuous kernel associated with the discrete kernels of the models has to be introduced. For $(\Xfrak,Q,\gamma) \in \Sbf_{r,D}$, denote by $K_{\Xfrak,Q}$ a transition kernel on $\Lambda$ that admits the modulus of continuity $\omega$ with respect to the Wasserstein 1 metric, extends the kernel defined by $Q$ on $\{x_z\}_{z = 1, \dots, r}$ and such that the support of $K_{\Xfrak,Q}(x,\cdot)$ is in $\{x_z\}_{z = 1, \dots, r}$ for all $x \in \Lambda$. Linear interpolation provides a way to construct such a kernel as soon as the modulus $\omega$ is concave.

To conclude the definition of the models, let $\majZ' > 0$, $(\majZ_n)_n$ be a sequence of positive real numbers such that $\majZ_n \rightarrow +\infty$ and $\underline{\varphi} : \C^d \rightarrow \R_+^*$. For all $r,D,n$, let $S_{r,D,n}$ be the subset of $\Sbf_{r,D}$ such that
\begin{multline}
\label{eq_strong_H1_seq}
\forall (\Xfrak,Q,\gamma) \in S_{r,D,n}\eqsp, \quad
\forall z_0 \in \C^d \text{ s.t. } \|z_0\| \leq \majZ_n\eqsp, \\
	\sup_{z_1 \in \C^d, \|z_1\| \leq \majZ'} |\Phi_{R_{K_{\Xfrak,Q}}}(z_0,z_1)| \vee |\Phi_{R_{K_{\Xfrak,Q}}}(z_1,z_0)| \geq \underline{\varphi}(z_0) \eqsp.
\end{multline}
This is a relaxed version of H\ref{assum::} and equation~\eqref{eq_strong_H1} in the sense that eventhough the elements of $S_{r,D,n}$ may not satisfy H\ref{assum::}, the limit of a convergent sequence $(R_{K_{\Xfrak_n,Q_n}})_n$ with $(\Xfrak_n,Q_n,\gamma_n) \in S_{r,D,n}$ for all $n$ satisfies H\ref{assum::}. For the following theorem to work, $\majZ'$ can be chosen arbitrarily, $(\majZ_n)_n$ must grow ``slowly enough" and $\underline{\varphi}$ be ``small enough"; an appropriate choice of these quantities is discussed in the proof of the Theorem, see Appendix~\ref{sec:proof:generaltheorem}.

\begin{theo}
\label{theoconsi2}
Assume that assumptions H\ref{assum::} and 
H\ref{hyp_transitionKernel} hold for $K^\star$. Assume also that $\gamma^* \in \Gamma$.
Let $\lambda^\star$ be the measure defined in assumption H\ref{hyp_transitionKernel} and $\Supp(\lambda^\star)$ its support. Then,  
almost surely, the maximum likelihood estimator satisfies
\begin{equation*}
\sup_{x \in \Supp(\lambda^\star)} W_1(K_{\hat{\Xfrak}_n,\hat{Q}_n}(x,\cdot),K^{\star}(x, \cdot)) \underset{n \rightarrow \infty}{\longrightarrow} 0
\end{equation*}
and
$\| \hat{\gamma}_n - \gamma^\star \|_1 \underset{n \rightarrow \infty}{\longrightarrow} 0\eqsp.
$
In particular, almost surely under $\po^\star$, for all $x \in \Supp(\lambda^\star)$, $K_{\hat{\Xfrak}_n,\hat{Q}_n}(x,\cdot) 
\longrightarrow 
K^{\star}(x, \cdot)$ for the weak convergence topology and 
$\mu_{K_{\hat{\Xfrak}_n,\hat{Q}_n}} 
\longrightarrow
\mu_{K^{\star}}$ for the weak convergence topology.
\end{theo}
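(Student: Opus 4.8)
The plan is to combine the oracle inequality of \cite{lehericy2018misspe} with the identifiability Theorem~\ref{theoident1}, using compactness to turn convergence of the law of the observations into convergence of the parameters. First I would apply Theorem~6 of \cite{lehericy2018misspe} to the collection $(S_{r,D,n})_{r,D,n}$: since the penalty has the prescribed form $(D+r^2)(\log n)^{15}/n$, the oracle inequality controls the Kullback--Leibler divergence rate $\mathbf{K}$ between the stationary law $\po^\star$ of the observations and the law $\po_{K_{\hat\Xfrak_n,\hat Q_n},\hat\gamma_n}$ of the selected model by a constant times $\inf_{r,D}\{\,(\text{approximation error of }S_{r,D,n})+(D+r^2)(\log n)^{15}/n\,\}$, up to a negligible remainder. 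The substantial point is that this infimum tends to $0$: since $\gamma^\star\in\Gamma$, it is approximated in $\|\cdot\|_1$ (hence, the Gaussian kernel having full support, in Kullback--Leibler divergence) by elements of $G_D$ as $D\to\infty$, because $p\mapsto\gamma_p$ is continuous from $(\Pcal(\Theta),\text{weak})$ to $L^1$ with $\Theta$ compact and its scale matrices non-singular; and $K^\star$, which has modulus of continuity $\omega/2$, is approximated by the interpolated discretizations $K_{\Xfrak,Q}$ as the number $r$ of support points grows, the constraint $Q\in[1/(Cr),C/r]^{r\times r}$ being compatible with the bound $2/C$ in H\ref{hyp_transitionKernel}. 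One must also check these approximating parameters eventually lie in $S_{r,D,n}$, i.e.\ satisfy \eqref{eq_strong_H1_seq}; this is where $(\majZ_n)_n$ must grow slowly enough and $\underline\varphi$ be small enough, and where the explicit choices announced before the statement are fixed (details in Appendix~\ref{sec:proof:generaltheorem}). Letting $n\to\infty$ and then $r,D\to\infty$ gives $\mathbf{K}(\po^\star,\po_{K_{\hat\Xfrak_n,\hat Q_n},\hat\gamma_n})\to 0$ almost surely (a high-probability version of the inequality plus Borel--Cantelli gives the almost sure statement). Since $n\mapsto\mathrm{KL}(\po^\star_{1:n}\|\po_{\theta,1:n})$ is superadditive for stationary processes, $\mathbf{K}(\po^\star,\po_\theta)\geq\frac12\,\mathrm{KL}\big(\mathrm{Law}_{\po^\star}(Y_1,Y_2)\,\|\,\mathrm{Law}_{\po_\theta}(Y_1,Y_2)\big)$, so by Pinsker's inequality the law of $(Y_1,Y_2)$ under the estimated model converges to its law under $\po^\star$ in total variation.

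The next step is the extraction. The kernels $K_{\hat\Xfrak_n,\hat Q_n}$ all take values in the compact metric space $(\Pcal(\Lambda),W_1)$ and share the concave modulus $\omega$, so by the Arzel\`a--Ascoli theorem the family is relatively compact for uniform convergence on the compact $\Lambda$; likewise $\hat\gamma_n$ ranges over a set compact in $L^1$. Along a subsequence, $K_{\hat\Xfrak_n,\hat Q_n}\to K_\infty$ uniformly and $\hat\gamma_n\to\gamma_\infty$ in $L^1$. The constraint $Q\in[1/(Cr),C/r]^{r\times r}$ yields a Doeblin minorization uniform in $r$, so the stationary distributions $\mu_{K_{\hat\Xfrak_n,\hat Q_n}}$ are tight and their limit points are stationary for $K_\infty$; by uniqueness of the stationary law, $R_{K_{\hat\Xfrak_n,\hat Q_n}}\to R_{K_\infty}$ weakly. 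The limit inherits the structural hypotheses: $\mu_{K_\infty}$ has compact support in $\Lambda$, hence $\mu_{K_\infty}\in\Mcal_1$; and since all these measures live on $\Lambda^2$, the maps $\Phi_R$ form a pointwise equicontinuous family, so $\Phi_{R_{K_{\hat\Xfrak_n,\hat Q_n}}}\to\Phi_{R_{K_\infty}}$ uniformly on compacts of $(\C^d)^2$, and letting $n\to\infty$ in \eqref{eq_strong_H1_seq} (any fixed $z_0$ eventually satisfies $\|z_0\|\leq\majZ_n$) shows $R_{K_\infty}$ satisfies H\ref{assum::}.

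Then I would identify the limit and conclude. The map $(K,\gamma)\mapsto\mathrm{Law}(Y_1,Y_2)=R_K*(\gamma\otimes\gamma)$ is continuous for weak convergence, so the two steps above force $\mathrm{Law}_{\po_{K_\infty,\gamma_\infty}}(Y_1,Y_2)=\mathrm{Law}_{\po^\star}(Y_1,Y_2)$. Applying the two-observation form of Theorem~\ref{theoident1} (as in Corollary~\ref{cor:reg}), with $\mu_{K_\infty},\mu_{K^\star}\in\Mcal_1$ and $R_{K_\infty},R_{K^\star}$ satisfying H\ref{assum::}, yields $R_{K_\infty}=R_{K^\star}$ and $\gamma_\infty=\gamma^\star$ up to a translation $m$. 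But the constraint $\int_\Theta\mu\,\rmd p(\mu,\Sigma)=0$ makes every density in $\Gamma$ centered, so $\gamma^\star$ and $\gamma_\infty$ are both centered, which forces $m=0$: thus $R_{K_\infty}=R_{K^\star}$ and $\gamma_\infty=\gamma^\star$. In particular $K_\infty(x,\cdot)=K^\star(x,\cdot)$ for $\mu_{K^\star}$-almost every $x$; both kernels being continuous with modulus $\omega$ and, by H\ref{hyp_transitionKernel}, $K^\star(x,\cdot)$ being equivalent to $\lambda^\star$ uniformly in $x$ so that $\Supp(\mu_{K^\star})=\Supp(\lambda^\star)$, equality holds for every $x\in\Supp(\lambda^\star)$. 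Every convergent subsequence therefore has the same limit, so the full sequences converge, which gives $\sup_{x\in\Supp(\lambda^\star)}W_1(K_{\hat\Xfrak_n,\hat Q_n}(x,\cdot),K^\star(x,\cdot))\to 0$ and $\|\hat\gamma_n-\gamma^\star\|_1\to 0$, hence the weak-convergence statements ($W_1$ metrizes weak convergence on $\Lambda$; the uniform Doeblin bound makes the stationary-distribution map continuous, so $\mu_{K_{\hat\Xfrak_n,\hat Q_n}}\to\mu_{K^\star}$ weakly).

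I expect the main obstacle to be the first step: verifying the hypotheses of the oracle inequality of \cite{lehericy2018misspe} for this model collection, and above all bounding the approximation error---simultaneously approximating $\gamma^\star$ inside $G_D$ and $K^\star$ by interpolated discretizations, in a Kullback--Leibler sense---while ensuring the approximating parameters satisfy the relaxed dependency constraint \eqref{eq_strong_H1_seq}, which is precisely what dictates the admissible growth rate of $(\majZ_n)_n$ and the smallness of $\underline\varphi$. A secondary delicate point is the joint convergence of the stationary distributions of the discretized kernels and the reduction of the kernel convergence to $\Supp(\lambda^\star)$.
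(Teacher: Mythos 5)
Your overall architecture (oracle inequality from \cite{lehericy2018misspe}, vanishing approximation error with the announced choice of $\majZ_n$ and $\underline{\varphi}$, Borel--Cantelli, compactness by Arzel\`a--Ascoli, identification of subsequential limits via Theorem~\ref{theoident1} with the centering fixing the translation) is the same as the paper's. But there is a genuine gap at the step where you pass from $\Kbf(\po_{K^\star,\gamma^\star}\,\|\,\po_{\hat{\Xfrak}_n,\hat{Q}_n,\hat{\gamma}_n})\to 0$ to total-variation convergence of the law of $(Y_1,Y_2)$. You justify it by claiming that $n\mapsto KL(\po^{\star(n)}\|\po^{(n)}_\theta)$ is superadditive for stationary processes, whence $\Kbf\geq\tfrac12 KL$ of the two-dimensional marginals. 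Superadditivity of the Kullback--Leibler divergence along the time index holds when the reference law is a product measure, but not for general stationary reference laws, and the candidate laws here are HMM laws, not products. Worse, the implication you want is simply false without mixing: take $\po^\star$ the law of a constant sequence $Y_1=Y_2=\cdots$ equal to a fair Bernoulli variable and $\po_\theta$ the law of a constant sequence equal to a biased Bernoulli variable; both are stationary, $KL(\po^{\star(m)}\|\po^{(m)}_\theta)$ is constant in $m$, so the entropy rate $\Kbf$ is $0$, yet the marginals do not coincide. What rescues the argument in the paper is that all laws in the models (and the true one, by H\ref{hyp_transitionKernel} and the constraint $Q\in[1/(Cr),C/r]^{r\times r}$) satisfy a uniform Doeblin condition, giving geometric forgetting and $\phi$-mixing; this is the content of Lemma~\ref{lem_KL_equivalent_VT}, proved via the forgetting bound of Douc--Moulines--Ryd\'en (Lemma~\ref{lem_douc04_revu}), which shows $k\Kbf\approx KL(\po^{\star(m+k)}\|\cdot)-KL(\po^{\star(m)}\|\cdot)$ up to $O\bigl((1-1/C^2)^{m}\bigr)$ and then lower bounds this increment by $d_{TV}^2(\po^{\star(k)},\po^{(k)}_{\theta})$ minus an exponentially small term using the chain rule, Pinsker and the mixing of the candidate chains. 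You need this (or an equivalent quantitative forgetting argument); the superadditivity shortcut does not work.

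A second, related soft spot is the converse direction you need for the approximation error: ``$\gamma^\star$ approximated in $\|\cdot\|_1$, hence in Kullback--Leibler'' is not automatic, because an $\Lbf^1$-small perturbation can blow up a log-likelihood ratio in the tails. In the paper this is again part of Lemma~\ref{lem_KL_equivalent_VT} (the converse implication), which controls $\Kbf$ by a finite-marginal $KL$ via forgetting, and then bounds that $KL$ by total variation plus a truncated log-ratio term whose integrability is exactly what H\ref{hyp_tails} (the envelope condition $\int b\,(b/m)^\epsilon<\infty$) provides. Apart from these two points, your identification and conclusion steps (equicontinuity of $\{\Phi_R\}$ to pass H\ref{assum::} to the limit through~\eqref{eq_strong_H1_seq}, $\mu_K\in\Mcal_1$ from compact support, centering to kill the translation, equality of kernels extended from $\mu_{K^\star}$-a.e.\ to $\Supp(\lambda^\star)$ by continuity, and convergence of stationary laws) match the paper's proof, which invokes \cite{karr75} for the last point where you invoke a uniform Doeblin continuity argument.
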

Theorem \ref{theoconsi2} is a special case of a theorem stated and proved in Appendix~\ref{sec:proof:mle} that holds for more general sets $\Gamma$ and $(G_D)_{D \geq 1}$.

\section{Simulations}
\label{sec:simu}

Consider the model where $Z_0$ is a uniform random variable on $(0,2\pi)$ and for all $k\geq 1$,
\begin{equation*}
Z_{k} = Z_{k-1} + \sigma_x \eta_k \eqsp,\quad X_{k} = \cos\left(Z_{k}\right)\quad\mathrm{and}\quad
Y_k = X_k + \sigma_y \varepsilon_k\eqsp,
\end{equation*}
where $(\sigma_x,\sigma_y) \in \times \R_+^*\times \R_+^*$ and where $(\varepsilon_k,\eta_k)_{k\geq 1}$ are independent standard Gaussian random variables independent of $Z_0$. The parameters $(\sigma_x,\sigma_y) = (0.1,0.1)$ are used to sample the observations. Assumption H\ref{hyp_transitionKernel} holds: the transition kernel $K^\star$ of $(X_k)_{k \geq 1}$ is $1/2$-H\"older and the probability measure $\lambda^\star$ can be taken as the invariant measure of $K^\star$.

This section provides numerical illustrations of the maximum likelihood approach, additional simulations using least squares for the characteristic functions are given in Appendix~\ref{sec:simu:appendix}.  The algorithm proposed here  is more efficient than the algorithm proposed in Appendix~\ref{sec:simu:appendix} whose performance  highly depends on the evolutionnary algorithm to minimize the criterion. The performance of the estimation procedure proposed in Section~\ref{sec:mle} is assessed in the case where $\Lambda = \R$ and $\Gamma$ is as in \eqref{eq_ex_Gamma} with $\Theta = \R \times (0,+\infty)$.
Although the compactness assumptions of Section~\ref{sec:mle} are not satisfied, in practice, the estimator is shown to converge to the true distribution. 
The main reason for these assumptions is to ensure theoretical consistency by ruling out the worst case scenarios where the estimators are degenerate.

For each $n \in \{5.10^3, 10^4, 2.10^4, 5.10^4, 10^5, 2.10^5\}$, 10 independent and identically distributed sequences $(Y_i)_{i = 1, \dots, n}$ are generated. For each sample, an approximation of the maximum likelihood estimator is computed using the Estimation Maximization algorithm \cite{dempster:laird:rubin:1977} for $D = 2$ and $r \in \{10, 20, 30\}$.
The error criterion is the estimated Wasserstein distance between the estimated and the true distribution of $(X_1, X_2)$, computed using $N_X \times N_W$ independent and identically distributed pairs $(X_{1,i}^{(j)}, X_{2,i}^{(j)})_{i = 1, \dots, N_X, j = 1, \dots, N_W}$ following the distribution $R_{K^\star}$ with $N_X = 5000$ and $N_W = 4$:
\begin{equation}
\label{eq:wass:dist}
\text{Error}(\hat{\Xfrak}_n, \hat{Q}_n) =
	\frac{1}{N_W} \sum_{j=1}^{N_W}
	W_1\left(  
		R_{K_{\hat{\Xfrak}_n, \hat{Q}_n}}, 
		\frac{1}{N_X} \sum_{i=1}^{N_X} \delta_{(X_{1,i}^{(j)}, X_{2,i}^{(j)})}
	\right),
\end{equation}
or equivalently (when written as a distance between weighted point processes)
\begin{equation*}
\text{Error}(\hat{\Xfrak}_n, \hat{Q}_n) =
	\frac{1}{N_W} \sum_{j=1}^{N_W}
	W_1\left(  
		\sum_{x,x' \in \hat{\Xfrak}_n} R_{K_{\hat{\Xfrak}_n, \hat{Q}_n}}(x,x') \delta_{(x,x')}, 
		\frac{1}{N_X} \sum_{i=1}^{N_X} \delta_{(X_{1,i}^{(j)}, X_{2,i}^{(j)})}
	\right).
\end{equation*}
The distance $W_1$ is computed using function \verb?wasserstein? from R package \verb?transport? \cite{Rpackage_transport, R}. The results are displayed in Figure~\ref{fig:mle}.

\newpage

\begin{figure}[!ht]
\centering
\includegraphics[width=.8\textwidth]{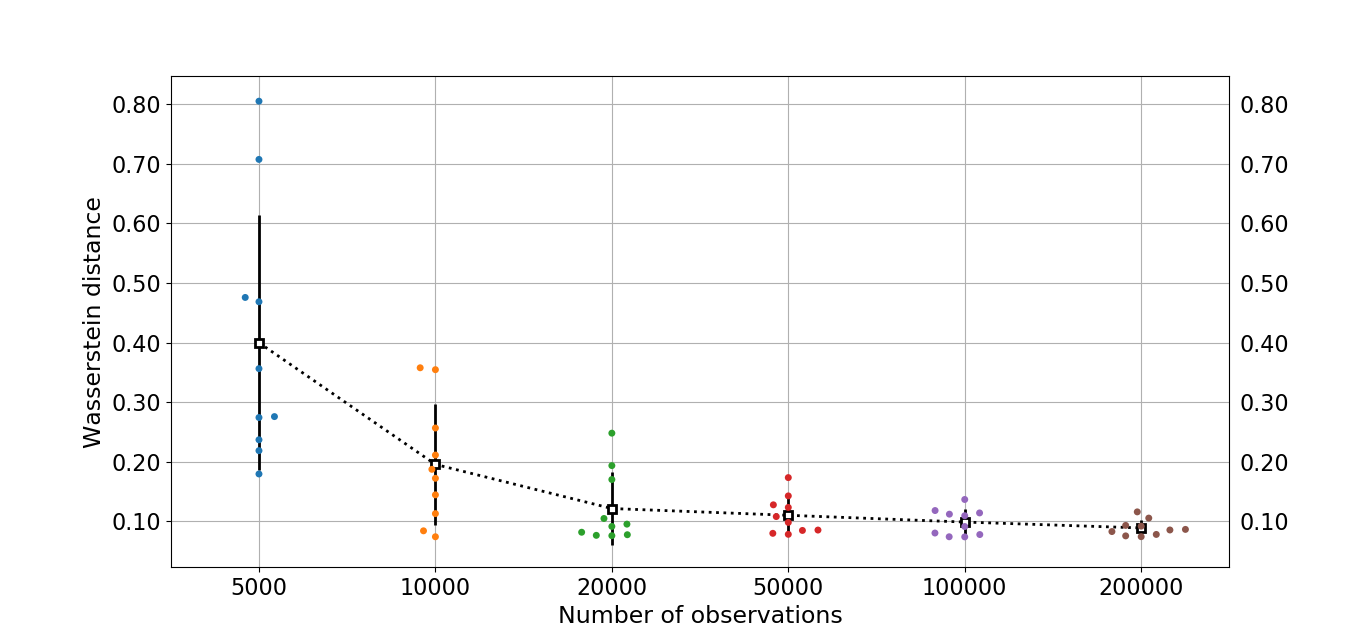}
\includegraphics[width=.8\textwidth]{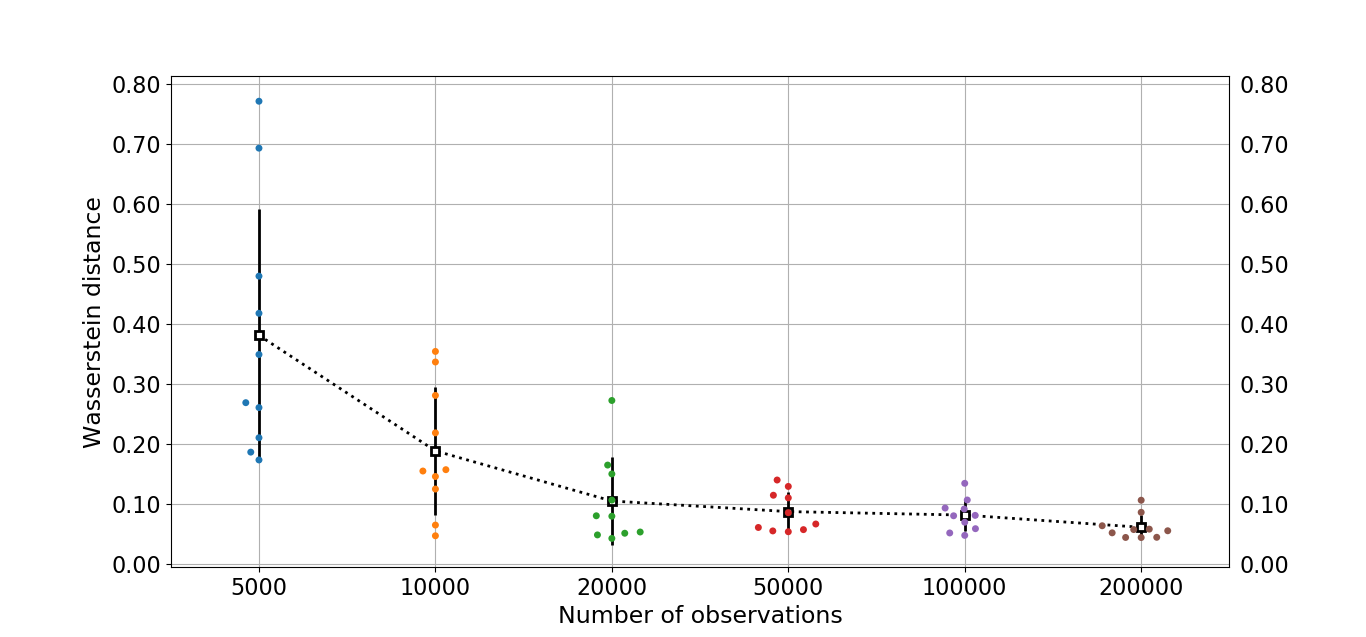}
\includegraphics[width=.8\textwidth]{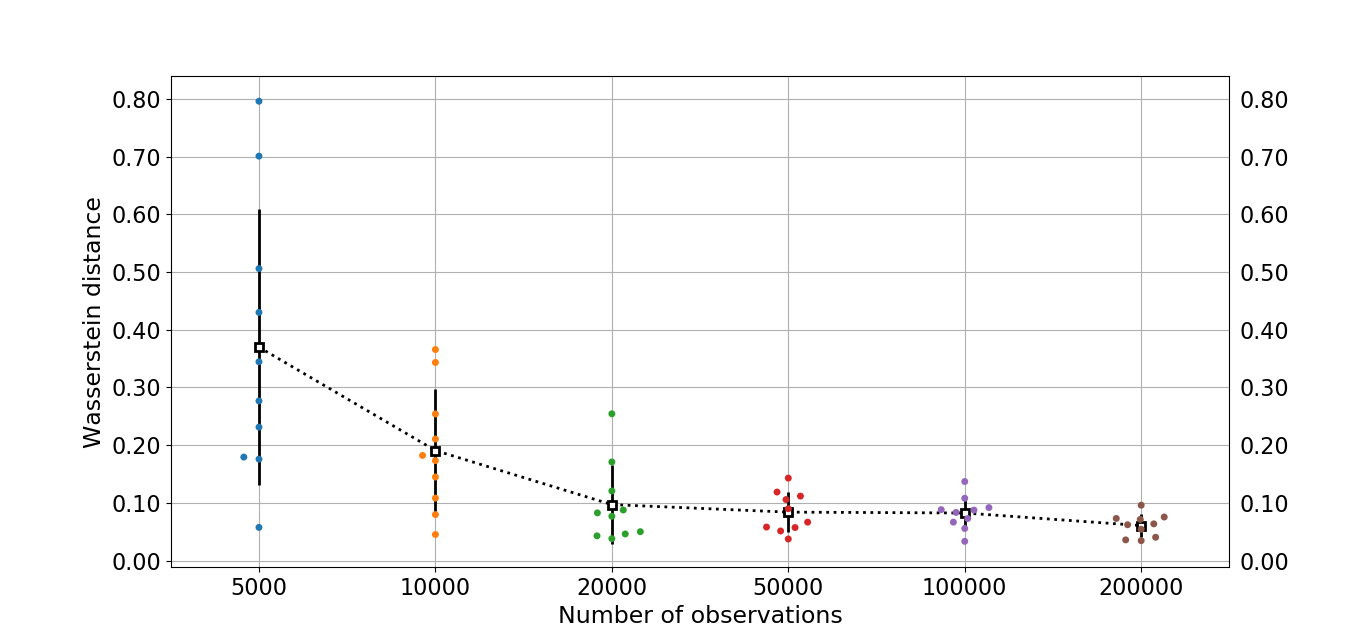}
\caption{Wasserstein distance computed as in \eqref{eq:wass:dist} for $r=10$ (top), $r = 20$ (middle) and $r = 30$ (bottom).  Each dot is an estimated value with the maximum likelihood approach. For each value of $r$, the mean value (squares) over all runs as well as the empirical standard deviation (bars) are displayed.}
\label{fig:mle}
\end{figure}

\begin{figure}[!ht]
\centering
\begin{tabular}{c|ccc}
$n$ & $r=10$ & $r=20$ & $r=30$ \\ \hline
5000 & [0.196; 0.327; 0.773] & [0.170; 0.312; 0.778] & [0.059; 0.304; 0.800] \\ 
10000 & [0.075; 0.182; 0.355] & [0.047; 0.169; 0.363] & [0.045; 0.184; 0.371] \\ 
20000 & [0.075; 0.097; 0.261] & [0.045; 0.082; 0.267] & [0.036; 0.079; 0.255] \\ 
50000 & [0.077; 0.098; 0.166] & [0.048; 0.077; 0.155] & [0.034; 0.074; 0.160] \\ 
100000 & [0.076; 0.103; 0.149] & [0.046; 0.091; 0.142] & [0.038; 0.084; 0.139] \\ 
200000 & [0.076; 0.087; 0.110] & [0.045; 0.065; 0.100] & [0.037; 0.062; 0.107] \\
\end{tabular}
\caption{Summary of the Wasserstein distance computed as in \eqref{eq:wass:dist}. Each cell contains the min, median and max value of the error over the 10 simulations with corresponding $r$ and $n$.}
\end{figure}

\section{Discussion}
\label{sec:discussion}
In this paper, we proved that statistical learning of a Markov signal corrupted by additive noise is possible without any knowledge of the noise and with weak nonparametric assumptions on the distribution of the hidden variables. 
We proposed estimation methods and proved consistency of the estimators under weak assumptions.
Establishing rates of convergence is a much more challenging task. It would require to relate the limiting criterion  to the risk of the estimator, namely to lower bound $M(\cdot)$ in section  \ref{sec:least:squares}
for the least squares estimator, or the entropy rate $\Kbf$ in 
section \ref{sec:proof:generaltheorem} for the maximum likelihood estimator, to some risk on the kernel of the hidden process and some risk on the distribution of the noise. In other words, the challenge is to get quantitative control in the inverse problem relating the kernel $K$ and the distribution $P$ to $\po_{K,P}$, in a similar way as Theorem 6 in \cite{MR3543517}. This can not be done as in usual deconvolution where one has prior knowledge on the noise distribution. 

As explained in Section \ref{sec:identifiability:general}, identifiability may be proved in other dependency settings. In the context of at least $2$-dimensional observations, deconvolution is possible without any knowledge of the noise distribution if the signal to be recovered has dependent coordinates. This is further developed in 
\cite{GLL}. In this work, rates are provided for the least squares estimator in the setting of independent and identically distributed observations.

\bibliographystyle{apalike}
\bibliography{biblio}

\appendix

\section{Proof of Theorem~\ref{theoident1}, Theorem~\ref{theoident2} and Theorem~\ref{theoconsi1}}
\label{sec:proof:ident1}

\subsection{Proof of Theorem~\ref{theoident1}}
The following result, which  may be established by arguing variable by variable, is used repeatedly in this proof. If a multivariate function is analytic on the whole multivariate complex space and is the null function in an open set of the multivariate real space or in an open set of the multivariate purely imaginary space, then it is the null function on the whole multivariate complex space. 

Assume that  $\po_{{K},P}=\po_{{\widetilde K},\widetilde P}$ and let $\phi$ (resp. $\widetilde \phi$) be the characteristic function of $\noisedist$ (resp. $\noisedistbis$). Notice that $\Phi_{R_K}(it_{1},it_{2})$ (resp. $\Phi_{R_{\tilde K}}(it_{1},it_{2})$) for real numbers numbers $t_{1}$ and $t_{2}$ defines the characteristic function of $(X_{1},X_{2})$ when the Markov chain has kernel $K$ (resp. $\tilde{K}$) and $\Phi_{R_K}(it,0)=\Phi_{R_K}(0,it)$ (resp. $\Phi_{R_{\tilde{K}}}(it,0)=\Phi_{R_K}(0,it)$)  for real numbers $t$ defines the characteristic function of any $X_{i}$ when the Markov chain has kernel $K$ (resp. $\tilde{K}$).
Since the distribution of $Y_{1}$ and $Y_2$  are the same under $\po_{{K},P}$ and $\po_{{\widetilde K},\widetilde P}$, for any $t\in \R^{d}$,
\begin{equation}
\label{chara1bis}
\phi\left(t\right)\Phi_{R_K}\left(it,0\right) = \widetilde{\phi}\left(t\right)\Phi_{R_{\tilde K}}\left(it,0\right)\eqsp.
\end{equation}
Since the distribution of $(Y_{1},Y_{2})$  is the same under $\po_{{K},P}$ and $\po_{{\widetilde K},\widetilde P}$, for any $(t_{1},t_{2})\in \R^{d}\times \R^{d}$,
\begin{equation}
\label{chara3bis}
\phi\left(t_{1}\right)\phi\left(t_{2}\right)\Phi_{R_K}\left(it_{1},it_{2}\right)=\widetilde{\phi}\left(t_{1}\right)\widetilde{\phi}\left(t_{2}\right)\Phi_{R_{\tilde K}}\left(it_{1},it_{2}\right)\eqsp.
\end{equation}
There exists a neighborhood $V$ of $0$ in $\R^{d}\times \R^{d}$ such that for all $t=(t_{1},t_{2})\in V$, $\phi\left(t_{1}\right)\neq 0$,  $\phi\left(t_{2}\right)\neq 0$, $\widetilde{\phi}\left(t_{1}\right)\neq 0$, $\widetilde{\phi}\left(t_{2}\right)\neq 0$, so that  (\ref{chara1bis}) and (\ref{chara3bis}) imply that for any $ (t_{1},t_{2})\in V^{2}$,
\begin{equation*}
\Phi_{R_K}\left(it_{1},it_{2}\right)\Phi_{R_{\tilde K}}\left(it_{1},0\right)\Phi_{R_{\tilde K}}\left(it_{2},0\right)=\Phi_{R_{\tilde K}}\left(it_{1},it_{2}\right)\Phi_{R_K}\left(it_{1},0\right)\Phi_{R_K}\left(it_{2},0\right)\eqsp.
\end{equation*}
Since $(z_1,z_2) \mapsto \Phi_{R_K}\left(z_1,z_2\right)\Phi_{R_{\tilde K}}\left(z_1,0\right)\Phi_{R_{\tilde K}}\left(z_2,0\right)-\Phi_{R_{\tilde K}}\left(z_1,z_2\right)\Phi_{R_K}\left(z_1,0\right)\Phi_{R_K}\left(z_2,0\right)$ is a multivariate analytic function of $2d$ variables which is zero in a purely imaginary neighborhood of $0$, then it is the null function on the whole multivariate complex space so that for any $z_{1}\in\C^{d}$ and $z_{2}\in\C^{d}$,
\begin{equation}
\label{analyticbis}
\Phi_{R_K}\left(z_{1},z_{2}\right)\Phi_{R_{\tilde K}}\left(z_{1},0\right)\Phi_{R_{\tilde K}}\left(z_{2},0\right)=\Phi_{R_{\tilde K}}\left(z_{1},z_{2}\right)\Phi_{R_K}\left(z_{1},0\right)\Phi_{R_K}\left(z_{2},0\right)
\eqsp.
\end{equation}
Fix $(u_{2},\ldots,u_{d})\in \C^{d-1}$ and let  $\Zsf$ be the set of zeros of $u \mapsto \Phi_{R_{K}}(u,u_{2},\ldots,u_{d},0)$ and $\widetilde{\Zsf}$ be the set of zeros of $u\mapsto\Phi_{R_{\tilde K}}(u,u_{2},\ldots,u_{d},0)$. Let  $u_{1}\in \Zsf$ and write $z_{1}=(u_{1},u_{2},\ldots,u_{d})$ so that by \eqref{analyticbis}, for any $z_{2}\in\C^{d}$,
\begin{equation}
\label{zerosbis}
\Phi_{R_K}\!\left(z_{1},z_{2}\right)\Phi_{R_{\tilde K}}\!\left(z_{1},0\right)\Phi_{R_{\tilde K}}\!\left(z_{2},0\right)=0\;\mathrm{and}\; \Phi_{R_K}\!\left(z_{2},z_{1}\right)\Phi_{R_{\tilde K}}\!\left(z_{1},0\right)\Phi_{R_{\tilde K}}\!\left(z_{2},0\right)=0
\eqsp.
\end{equation}
By assumption H\ref{assum::}, $z_2 \rightarrow \Phi_{R_K}\left(z_{1},z_{2}\right)$ is not the null function or $z_2 \rightarrow \Phi_{R_K}\left(z_{2},z_{1}\right)$ is not the null function.  Assume without loss of generality that $z_2 \rightarrow \Phi_{R_K}\left(z_{1},z_{2}\right)$ is not the null function (the proof follows the same steps in the other case). 
Then, there exists $z_{2}^{\star}$ in $\C^{d}$ such that $\Phi_{R_K}\left(z_{1},z_{2}^{\star}\right)\neq 0$ and by continuity, there exists an open neighborhood of $z_{2}^{\star}$ such that for all $z_{2}$ in this open set, $\Phi_{R_K}\left(z_{1},z_{2}\right)\neq 0$. Since $z\mapsto \Phi_{R_{\tilde K}}\left(z,0\right)$ is not the null function and is analytic on $\C^{d
}$, it can not be null all over this open set, so that there exists $z_{2}$ such that simultaneously $\Phi_{R_K}\left(z_{1},z_{2}\right)\neq 0$ and $\Phi_{R_{\tilde K}}\left(z_{2},0\right)\neq 0$. 
 Then (\ref{zerosbis}) leads to $\Phi_{R_{\tilde K}}\left(z_{1},0\right)=0$, so that $\Zsf \subset \widetilde{\Zsf}$.
A symmetric argument yields $\widetilde{\Zsf}\subset \Zsf$ so that  $\Zsf = \widetilde{\Zsf}$.

Moreover, the analytic functions $u\mapsto \Phi_{R_K}(u,u_{2},\ldots,u_{d},0)$ and $u\mapsto \Phi_{R_{\tilde K}}(u,u_{2},\ldots,u_{d},0)$ have exponential growth order less than $2$, so that using Hadamard's factorization Theorem, see \cite[Chapter~5, Theorem~5.1]{Stein:complex}, there exists a polynomial function $s$ with  degree at most $1$ (and with coefficients depending on $(u_{2},\ldots,u_{d})$) such that for all $u\in\C$,
\[
\Phi_{R_K}(u,u_{2},\ldots,u_{d},0) = \rme^{s(u)}\Phi_{R_{\tilde K}}(u,u_{2},\ldots,u_{d},0)\eqsp.
\]
Arguing similarly for all variables, we get that there exists a  function $S$ on $\C^{d}$ which is, for any $i=1,\ldots,d$, polynomial with degree at most $1$ in $u_i$, and such that for all $(u_{1},\ldots,u_{d})\in\C^{d}$,
\begin{equation}
\label{eq:phi:exp}
\Phi_{R_K}(u_{1},u_{2},\ldots,u_{d},0) = \rme^{S(u_{1},u_{2},\ldots,u_{d})}\Phi_{R_{\tilde K}}(u_{1},u_{2},\ldots,u_{d},0)\eqsp.
\end{equation}
In other words, there exists complex functions $a_{i}$, $b_{i}$ on $\C^{d-1}$ such that, if we denote $u^{(-i)}$ the $(d-1)$-dimensional complex vectors with the same coordinates as $u$ except that $u_i$ is not included in the coordinates, then
$$
S(u_{1},u_{2},\ldots,u_{d})=a_{i}(u^{(-i)})u_{i}+b_{i}(u^{(-i)}),\;i=1,\ldots,d.
$$
But, for $i\neq j$, the fact that $a_{i}(u^{(-i)})u_{i}+b_{i}(u^{(-i)})=a_{j}(u^{(-j)})u_{j}+b_{i}(u^{(-j)})$ implies that $a_{i}(u^{(-i)})$ and $b_{i}(u^{(-i)})$ are polynomial functions with degree at most $1$ in $u_j$ (this may be seen for instance by taking complex derivatives), and by induction we get that $S$ is a polynomial function which is, for any $i=1,\ldots,d$ polynomial with degree at most $1$ in $u_i$.

Since $\Phi_{R_K}(0,\ldots,0)=\Phi_{R_{\tilde K}}(0,\ldots,0)=1$, the constant term of the polynomial $S$ is $0$. 
Assume that $\mu_{\tilde{K}}$  is not supported by $0$. Then there exist $a=(a_{1},\ldots,a_{d})\in\R^{d}$, $\alpha>0$ and $\delta >0$ such that 
$$
0 \notin \prod_{j=1}^{d} [a_{j}-\alpha,a_{j}+\alpha] \quad\mathrm{and}\quad
\mu_{\tilde{K}}\left(\prod_{j=1}^{d} [a_{j}-\alpha,a_{j}+\alpha]
\right)\geq \delta\eqsp,
$$
which gives, for all $u\in\R^{d}$,
$$ 
\Phi_{R_{\tilde K}}(u,0) \geq \delta \rme^{ \sum_{j=1}^{d}\inf_{x_{j} \in [a_{j}-\alpha,a_{j}+\alpha]}u_{j}x_{j}}\eqsp,
$$
so that using (\ref{eq:phi:exp}), for all $u\in\R^{d}$,
$$
\Phi_{R_{K}}(u,0) \geq \delta \rme^{S(u)} \rme^{ \sum_{j=1}^{d}\inf_{x_{j} \in [a_{j}-\alpha,a_{j}+\alpha]}u_{j}x_{j}}\eqsp.
$$
 If  $S$ has degree at least $2$, then there exist $i\neq j$ and polynomial functions with degree at most one in each variable $c_{1}$ on $\C^{d-2}$ and $c_{2}$, $c_{3}$ on $\C^{d-1}$ such that, if we denote $u^{(-i,-j)}$ the $(d-2)$-dimensional complex vectors with the same coordinates as $u$ except that $u_i$ and $u_{j}$ are not included in the coordinates, then
$S(u)=c_{1}(u^{(-i;-j)})u_{i}u_{j} + c_{2}(u^{(-i)}) + c_{3}(u^{(-j)}) $. Without loss of generality say that $i=1$ and $j=2$. Then it is possible to find $u\in \R^{d}$ and $\tilde{\delta}>0$ such that  for all $t\geq 0$,
$S(tu_{1},tu_{2},u_{3},\ldots,u_{d})\geq \tilde{\delta} t(u_{1}^2+u_{2}^{2})$ leading to
$$
\forall t\geq 0,\; \Phi_{R_{K}}(tu_{1},tu_{2},u_{3},\ldots,u_{d},0) \geq \delta\rme^{\tilde{\delta} t(u_{1}^2+u_{2}^{2})} \rme^{ \sum_{j=1}^{d}\inf_{x_{j} \in [a_{j}-\alpha,a_{j}+\alpha]}u_{j}x_{j}}\eqsp,
$$
contradicting the assumption that $\mu_{ K} \in \M_{\rho}$ for some $\rho <2$. Thus, 
 $S$ has degree at most $1$
 and there exists $m\in \C^{d}$ such that for all $z\in\C^{d}$, 
\begin{equation}
\label{idem}
\Phi_{R_K}(z,0)=\rme^{m^{T}z}\Phi_{R_{\tilde K}}(z,0)\eqsp.
\end{equation}
As for all $z\in\R^{d}$, $\Phi_{R_K}(-iz,0)=\overline{\Phi_{R_K}(iz,0)}$ and
$\Phi_{R_{\tilde K}}(-iz,0)=\overline{\Phi_{R_{\tilde K}}(iz,0)}$,  then $m\in\R^{d}$. Combining (\ref{idem}) with  (\ref{analyticbis}) yields, for all $ (t_{1},t_{2})\in \R^{d}\times\R^{d}$,
\begin{equation}
\label{ident1m}
\Phi_{R_K}(it_{1},it_{2})=e^{im^{T}t_{1}+im^{T}t_{2}}\Phi_{R_{\tilde K}}(it_{1},it_{2})\eqsp.
\end{equation} 
Then, using (\ref{chara1bis}), for all $t\in \R^{d}$ such that $\Phi_{R_K}(it,0)\neq 0$,  $\phi(t)=\rme^{-im^{T}t}\widetilde{\phi}(t)$. Since the set of zeros of $t\mapsto \Phi_{R_K}(it,0)$ has empty interior, for each $t$ such that $\Phi_{R_K}(it,0)= 0$ it is possible to find a sequence $(t_{n})_{n\geq 1}$ such that $t_{n}$ tends to $t$ and for all $n$,  $\Phi_{R_K}(it_{n},0)\neq 0$. But $\phi$ and $\widetilde{\phi}$ are continuous functions, so that for all $t\in\R$, 
\begin{equation}
\label{ident2m}
\phi(t)=\rme^{-im^{T}t}\tilde{\phi}(t)\eqsp.
\end{equation} 
The proof is concluded by noting that (\ref{ident1m}) and  (\ref{ident2m}) imply that $R_K=R_{\tilde K}$ and $P=\tilde{P}$ up  to translation.

\subsection{Proof of Theorem~\ref{theoident2}}
\label{sec:proof:ident2}

Following the same steps as in the proof of Theorem \ref{theoident1}, there exists a polynomial $S$ with real coefficients and degree at most $2$ such that, for all $z\in\C$, 
$
\Phi_{R_K}(z,0)=\rme^{S(z)}\Phi_{R_{\widetilde K}}(z,0)
\eqsp,
$
and for all $(z_{1},z_{2})\in \C\times \C$,
\begin{equation}
\label{eq:rho2}
\Phi_{R_K}(z_{1},z_{2})=e^{S(z_{1})}e^{S(z_{2})}\Phi_{R_{\widetilde K}}(z_{1},z_{2})\eqsp.
\end{equation}
Assume that $S$ has degree equal to $2$. Then, there exist real numbers $a$, $b$, $c$ such that for all $z\in\C$, $S(z)=az^{2}+bz+c$. With no loss of generality assume that $a>0$ (otherwise, replace $K$ by $\widetilde K$).
Then, (\ref{eq:rho2}) means that there exist independent and identically distributed Gaussian variables $\eta_{i}$, with variance $2a$, such that, if $(X_{i})_{i\geq1}$ is a stationary Markov chain with transition kernel $K$ and $(\tilde{X}_{i})_{i\geq1}$ is a stationary Markov chain with transition kernel $\widetilde K$,   $(X_{i})_{i\geq 1}$ has the same distribution as $(\tilde{X}_{i}+\eta_{i})_{i\geq 1}$, with $\eta_{i}$, $i\geq 1$, independent of $(\tilde{X}_{i})_{i\geq1}$. Using Lemma \ref{lem:gaussMarkov}, this implies that the random variables $(X_{i})_{i\geq 1}$ are independent and identically distributed. 
If $z\mapsto\Phi_{R_K}(z,0)$ has no zeros, then it has the same set of zeros as the constant function equal to one (corresponding to deterministic independent variables equal to $0$), so that 
using Hadamard's Theorem, since $\mu_{K}\in \M_{\rho}$  with $\rho<3$, then there exists a polynomial with degree at most $2$ such that $\Phi_{R_K}(z,0)$ is the exponential of that polynomial, so that $(X_{i})_{i\geq 1}$ is a sequence of independent Gaussian variables,
contradicting the assumption of Theorem \ref{theoident2}.
But for all $(z_{1},z_{2})\in\C^{d}\times \C^{d}$, $\Phi_{R_K}(z_{1},z_{2})=\Phi_{R_K}(z_{1},0)\Phi_{R_K}(0,z_{2})$.  Thus if $z\mapsto\Phi_{R_K}(z,0)$ has at least one zero  $z_{0}$, then 
$\Phi_{R_K}(z_{0},z)=0$ and $\Phi_{R_K}(z,z_{0})=0$ for all $z\in\C$, contradicting assumption H\ref{assum::} in Theorem \ref{theoident2}.
Then we may conclude that $S$ has degree at most $1$, and the end of the proof of Theorem \ref{theoident2} follows the same steps as the proof of Theorem \ref{theoident1}.

\subsection{Proof of Lemma \ref{lem:gaussMarkov}}

For all $x\in\R$, let $x'\mapsto q(x,x')$ be the density of the transition kernel 
of the Markov chain $(X_{i})_{i\geq 1}$ with respect to the Lebesgue measure  and $\mu$ be its stationary density. Denote $m$ the mean and $\sigma^{2}$ the variance of $\eta_{1}$, and let $\phi$ be the density of $\eta_{1}$ . Denote by $g_{1}$ the density of $X_{i}+\eta_{i}$, $g_{2}$ the density of $(X_{i}+\eta_{i}, X_{i+1}+\eta_{i+1})$, $g_{3}$ the density of $(X_{i}+\eta_{i}, X_{i+1}+\eta_{i+1}, X_{i+2}+\eta_{i+2})$ for any $i\geq 1$.
The fact that $(X_{i}+\eta_{i})_{i\geq 1}$ is a Markov chain implies that the conditional distribution of $X_{3}+\eta_{3}$, conditionally to $(X_{2}+\eta_{2},X_{1}+\eta_{1})$, equals the conditional distribution of $X_{3}+\eta_{3}$, conditionally to $X_{2}+\eta_{2}$ alone, that is for all real numbers $y_{1}, y_{2}, y_{3}$, 
$$
g_{3}(y_{1},y_{2},y_{3})g_{1}(y_{2})=g_{2}(y_{1},y_{2})g_{2}(y_{2},y_{3}).
$$
This rewrites as follows. 
 For all real numbers $y_{1}, y_{2}, y_{3}$, 
\begin{multline*}
\!\int\! \mu(x_{1})q (x_{1},x_{2})\phi (y_{1}-x_{1})\phi (y_{2}-x_{2})q(x_{2},x_{3})\phi (y_{3}-x_{3})\rmd x_{1}\rmd x_{2}\rmd x_{3} \!\int\! \mu(x_{4})\phi (y_{2}-x_{4})\rmd x_{4}
\\
= \!\int\! \mu(x_{1})q (x_{1},x_{2})\phi (y_{1}-x_{1})\phi (y_{2}-x_{2})\rmd x_{1}\rmd x_{2} \!\int\! \mu(x_{4})q (x_{4},x_{3})\phi (y_{3}-x_{3})\phi (y_{2}-x_{4})\rmd x_{3}\rmd x_{4}.
\end{multline*}
But for all real numbers $x$ and $y$, $\phi (y-x)=\phi (x-y-2m)$.
Since $y$ is a complete statistic for $\phi(x-y-2m) \rmd x$, this 
implies that for all real numbers $x_{1}, x_{3}, y_{2}$, 
\begin{equation}
\label{multigauss2}
\int \mu(x_{1})q (x_{1},x_{2})\mu (x_{4})[q (x_{2},x_{3})-q (x_{4},x_{3})]\phi (y_{2}-x_{2})\phi (y_{2}-x_{4})\rmd x_{2}\rmd x_{4}=0\eqsp.
\end{equation}
Using that $\phi (y_{2}-x_{2})\phi (y_{2}-x_{4})=\phi (\sqrt{2}[y_{2}-(x_{2}+x_{4})/2)]) \phi ((x_{2}-x_{4}+m)/\sqrt{2})$,  (\ref{multigauss2}) implies that for  all real numbers $x_{1}, x_{3}$, $u$,
\begin{equation}
\label{multigauss3}
\int \mu(x_{1})q \left(x_{1},\frac{u+v}{2}\right)\mu \left(\frac{u-v}{2}\right)\left[q \left(\frac{u+v}{2},x_{3}\right)-q \left(\frac{u-v}{2},x_{3}\right)\right]\phi ((v+m)/\sqrt{2})\rmd v=0\eqsp.
\end{equation}
Let $H:\R^{3}\longrightarrow \R$ be any measurable and positive function. Define the measurable and positive function $G: (x,y,z)\mapsto H(x,y,z)\phi ((x-y+2m)/2\sqrt{2})$.
Then by multiplying (\ref{multigauss3}) by $H((u+v)/2, (u-v)/2, x_{3})$ and integrating over $x_{1}, x_{3}$, $u$,
we get by change of variable that 
\begin{multline}
\int \mu(x_{1})q (x_{1},x_{2})q (x_{2},x_{3})\mu (x_{4})G(x_{2},x_{4},x_{3})\rmd x_{1}\rmd x_{2}\rmd x_{3}\rmd x_{4}\\=
\int \mu(x_{1})q (x_{1},x_{2})\mu (x_{4})q (x_{4},x_{3})G(x_{2},x_{4},x_{3})\rmd x_{1}\rmd x_{2}\rmd x_{3}\rmd x_{4}\eqsp.\label{multigauss4}
\end{multline}
Let now $(\tilde{X}_{i})_{i\geq 1}$ be a Markov chain with the same distribution of $(X_{i})_{i\geq 1}$ but independent of  $(X_{i})_{i\geq 1}$. 
Since the correspondance $G \leftrightarrow H$ between measurable positive functions is one-to-one, (\ref{multigauss4}) means that
for any measurable and positive function $G$, 
$
\E\left[ G\left(X_{2},\tilde{X}_{2},X_{3}\right)\right]= \E\left[ G\left(X_{2},\tilde{X}_{2},\tilde{X}_{3}\right)\right]\eqsp,
$
which means that $(X_{2},\tilde{X}_{2},X_{3})$ and $(X_{2},\tilde{X}_{2},\tilde{X}_{3})$ have the same distribution. But this implies that $X_{2}$ is independent of $(\tilde{X}_{2},X_{3})$ which implies that $X_{2}$ is independent of $X_{3}$.

\subsection{Proof of Theorem \ref{theoconsi1}}

Using the fact that characteristic functions are bounded by $1$, for all $R\in \Rcal$,
\begin{equation}
\label{ineqMn}
\left \vert M_{n}(R)-M(R) \right\vert \leq \frac{3}{\sqrt{n} }\sup_{(t_{1},t_{2})\in \Scal} | Z_{n}(t_{1},t_{2}) | + \frac{1}{n }\sup_{(t_{1},t_{2})\in \Scal} | Z_{n}(t_{1},t_{2}) | ^{2}\eqsp,
\end{equation}
and using the assumption on $Z_{n}$, 
$\sup_{R\in \Rcal}\vert M_{n}(R)-M(R)\vert = O_{\po^{\star}}(n^{-1/2})$. Now, using the definition of $\widehat{R}_{n}$ and (\ref{ineqMn}), 
$
M(\widehat{R}_{n})\leq M_{n}(\widehat{R}_{n})+O_{\po^{\star}}(n^{-1/2})
\leq M_{n}(R^{\star})+O_{\po^{\star}}(n^{-1/2})
\leq M(R^{\star})+O_{\po^{\star}}(n^{-1/2})\eqsp.
$
$M(\widehat{R}_{n})$ is then upper bounded by a term of order $O_{\po^{\star}}(n^{-1/2})$ since $M(R^{\star})=0$, and 
the first assertion of Theorem \ref{theoconsi1} is proved. Now, 
$R\mapsto M(R)$ is continuous for the weak convergence topology, and for any $\epsilon >0$, $\sup_{R\in \Rcal, d(R,R^{\star})\geq \epsilon} M(R)$ is attained by compactness of $\{R\in \Rcal, d(R,R^{\star})\geq \epsilon\}$, and positive since $M(R)=0$ if and only if $R=R^{\star}$ up to translation. Thus using  Theorem 5.7 in \cite{MR1652247},
the set of limiting values of $(\widehat{R}_{n})_{n\geq 1}$ for the weak convergence topology is the set of $R\in \Rcal$ such that  $R=R^{\star}$ up to translation.

\section{Proof of Theorem \ref{theoconsi2}}
\label{sec:proof:mle}

\subsection{General statement}
This section provides in Theorem~\ref{theoconsi_mle_general} a more general statement of the result claimed in Theorem~\ref{theoconsi2}. It extends the class of emission densities $\Gamma$ and the models $(G_D)_D$ considered beyond mixtures of Gaussian distributions, but does not change the modelling of the state space.
The proof of Theorem~\ref{theoconsi_mle_general} is postponed to Section~\ref{sec:proof:generaltheorem}.

Let $\Gamma$ be a set of probability densities on $\R^d$ that satisfies the following assumption.
\begin{hypH}
\label{hyp_densiteEmission}
$\Gamma$ is a set of continuous and positive probability densities that admit a first order moment and are centered in the sense that for all $\gamma \in \Gamma$,
\begin{equation}
\label{eq_centering}
\int_{\R^d} y \gamma(y) \rmd y = 0\eqsp.
\end{equation}
$\Gamma$ is a compact subset of $\Lbf^1(\R^d)$ and the envelope function
\begin{equation*}
b : y \in \R^d \longmapsto \sup_{\gamma \in \Gamma} \sup_{x \in \Lambda} \max( \gamma(y-x), \gamma(x-y) )
\end{equation*}
satisfies $b \in \Lbf^1(\R^d) \cap \Lbf^\infty(\R^d)$, admits a first order moment, and there exists a constant $C_\Gamma > 0$ such that for all $\gamma \in \Gamma$ and $y \in \R^d$, the mapping $x \in \Lambda \longmapsto \gamma(y-x) / b(y)$ is $C_\Gamma$-Lipschitz.
Finally, $\gamma^{\star}\in \Gamma$.
\end{hypH}
The centering assumption~\eqref{eq_centering} allows to fix the translation parameter in the identifiability results. 

\begin{ex}
Let $f$ be a bounded and positive probability density on $\R^d$ that admits a first order moment and is centered.
Assume that there exists $\epsilon > 0$ such that
\begin{equation*}
\underset{\|\mu\|_2 \leq \epsilon, \  \|\Sigma - \Id_d\|_F \leq \epsilon}{\sup_{(\mu, \Sigma) \in \R^d \times GL_d(\R)}}
		f(\Sigma(\cdot - \mu))
	\in \Lbf^1(\R^d)
\end{equation*}
and let $\Theta$ be a compact subset of $\R^d \times GL_d(\R)$.
Finally, assume that there exists a function $D_f$ such that for all $y,y' \in \R^d$, $|f(y) - f(y')| \leq D_f(y) |y-y'|$ and such that $(D_f/f) \in \Lbf^\infty(\R^d)$.
Then the set of translation-scale mixtures of $f$ with parameters in $\Theta$
\begin{equation*}
\Gamma = \left\{ \gamma : y \longmapsto \int_\Theta |\det(\Sigma)| f\left( \Sigma(y-\mu) \right) \rmd p(\mu,\Sigma) : p \in \Pcal(\Theta), \int_\Theta \mu \rmd p(\mu, \Sigma) = 0 \right\}
\end{equation*}
satisfies H\ref{hyp_densiteEmission}.
\end{ex}

\begin{hypH}
\label{hyp_tails}
$\Gamma$ satisfies H\ref{hyp_densiteEmission} with the envelope function $b$.
Let $m$ be the lower envelope function of $\Gamma$ defined by
\begin{equation*}
m : y \in \R^d \longmapsto \inf_{\gamma \in \Gamma} \inf_{x \in \Lambda} \gamma(y-x).
\end{equation*}
There exists $\epsilon > 0$ such that $\int b(y) [b(y)/m(y)]^\epsilon \rmd y < \infty$.
\end{hypH}

\begin{ex}
The set $\Gamma$ of Gaussian location-scale mixtures of Section~\ref{sec:mle} satisfies H\ref{hyp_densiteEmission} and H\ref{hyp_tails}.
\end{ex}

Then, consider $(G_D)_{D \geq 1}$  a family of subsets of $\Gamma$. The following assumption essentially means that each $G_D$ is a parametric model with dimension $D$.
\begin{hypH}
\label{hyp_modelesParam}
$\Gamma$ satisfies H\ref{hyp_densiteEmission} and H\ref{hyp_tails} with the functions $b$ and $m$, the set
$\bigcup_{D \geq 1} G_D$ is dense in $\Gamma$ with respect to the $\Lbf^1$ norm, and
there exists a constant $\tilde c > 0$, a mapping $(D,A) \in \N^* \times \R_+ \longmapsto c(D,A)$ and an increasing mapping $D \longmapsto \dim_D$ such that the following holds.
\begin{itemize}
\item For all $D \geq 1$ and $A \geq 0$, $\log c(D,A) \leq \tilde c (\log \dim_D + A)$.
\item For all $D \geq 1$, there exists a surjective application $\theta \in \Theta_D \subset [-1,1]^{\dim_D} \longmapsto \gamma^\theta \in G_D$ such that for all $x \in \Lambda$, $A \geq 0$ and $y \in \R^d$ such that $\log (b(y)/m(y)) \leq A$, the mapping
$
\theta \in \Theta_D \longmapsto \gamma^\theta(y - x) / b(y)
$
is $c(D,A)$-Lipschitz (with $\Theta_D$ endowed with the supremum norm).
\end{itemize}
\end{hypH}
The exact value of $\tilde c$ only matters for the constants in the penalty.

\begin{ex}
The family $(G_D)_{D \geq 1}$ of finite Gaussian translation-scale mixtures defined in Section~\ref{sec:mle} satisfies H\ref{hyp_modelesParam} with $\dim_D = D(d^2 + d) + D-1$ for all $D\geqslant 1$.
\end{ex}
Define the sets $(\Sbf_{r,D})_{r \geq 1, D \geq 1}$, the models $(S_{r,D,n})_{r,D,n}$ and their maximum likelihood estimators $(\hat{\Xfrak}_{r,D,n},\hat{Q}_{r,D,n},\hat{\gamma}_{r,D,n})$ as in Section~\ref{sec:mle}.
Then, select the number of states and the model dimension using the penalized likelihood. Let $\pen(n,r,D)$ be a penalty function such that $\pen(n,r,D) \underset{n \rightarrow +\infty}{\longrightarrow} 0$ for all $r$ and $D$ and such that there exists a sequence $(u_n)_{n \geq 1}$ satisfying $u_n \underset{n \rightarrow \infty}{\longrightarrow} +\infty$ and for all $n$, $r$, $D$,
\begin{equation*}
\pen(n,r,D) \geq u_n (\dim_D + rd + r^2 - 1) \frac{(\log n)^{14} \log \log n}{n}\eqsp.
\end{equation*}
For instance, for any constant $\text{cst} > 0$, this inequality holds by choosing $\pen : (n,r,D) \longmapsto ({\text{cst} \cdot \dim_D} + r^2) \frac{(\log n)^{15}}{n}$.
Let
\begin{equation*}
(\hat{r}_n, \hat{D}_n) \in \argmax_{r \leq \log n, D \text{ s.t. } \dim_D \leq n}
	\left( \frac{1}{n} \ell_n(\hat{\Xfrak}_{r,D,n},\hat{Q}_{r,D,n},\hat{\gamma}_{r,D,n}) - \pen(n,r,D) \right)
\end{equation*}
and define the final estimators
$
(\hat{\Xfrak}_n, \hat{Q}_n, \hat{\gamma}_n) = (\hat{\Xfrak}_{\hat{r}_n, \hat{D}_n, n}, \hat{Q}_{\hat{r}_n, \hat{D}_n, n}, \hat{\gamma}_{\hat{r}_n, \hat{D}_n, n})\eqsp.
$ 

\begin{theo}
\label{theoconsi_mle_general}
Assume that assumptions H\ref{assum::}, H\ref{hyp_transitionKernel}, H\ref{hyp_densiteEmission}, H\ref{hyp_tails} and H\ref{hyp_modelesParam} hold.
Let $\lambda^\star$ be the measure defined in assumption H\ref{hyp_transitionKernel}.
Then, almost surely
\begin{equation*}
\sup_{x \in \Supp(\lambda^\star)} W_1(K_{\hat{\Xfrak}_n,\hat{Q}_n}(x,\cdot),K^{\star}(x, \cdot)) \underset{n \rightarrow \infty}{\longrightarrow} 0
\end{equation*}
and
$
\| \hat{\gamma}_n - \gamma^\star \|_1 \underset{n \rightarrow \infty}{\longrightarrow} 0
$.
In particular, almost surely under $\po^\star$, for all $x \in \Supp(\lambda^\star)$,
$
{ K_{\hat{\Xfrak}_n,\hat{Q}_n}(x,\cdot) 
\longrightarrow 
K^{\star}(x, \cdot) }
$
for the weak convergence topology and if $\po^X_{K}$ denotes the distribution of the stationary Markov chain with transition kernel $K$,
$
{ \po^X_{K_{\hat{\Xfrak}_n,\hat{Q}_n}} 
\longrightarrow
\po^X_{K^{\star}} }
$
for the weak convergence topology.
\end{theo}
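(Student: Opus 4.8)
The plan is to reduce Theorem~\ref{theoconsi_mle_general} to an oracle inequality for the penalized maximum likelihood estimator in finite state space HMMs, combined with the identifiability result of Theorem~\ref{theoident1}. The structure mirrors the standard ``consistency via bias-variance control plus identifiability'' scheme, but the presence of an unknown noise distribution and of the state-space discretization requires care at several points. First I would invoke Theorem~6 of \cite{lehericy2018misspe}: applied to the collection of models $(S_{r,D,n})_{r,D,n}$, it yields a bound on the excess Kullback--Leibler-type risk (an entropy rate $\Kbf$ between the stationary laws of the observed process under the estimated parameter and under $\po^\star$) of the form $\Kbf(\po^\star, \po_{\hat{\Xfrak}_n,\hat Q_n,\hat\gamma_n}) \leq \inf_{r,D}\{\text{approximation error of }S_{r,D,n}\} + \pen(n,r,D) + o(1)$ almost surely, where the penalty is exactly the one fixed in the statement. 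The hypotheses H\ref{hyp_densiteEmission}, H\ref{hyp_tails}, H\ref{hyp_modelesParam} are tailored so that the bracketing entropy and Lipschitz constants needed in that theorem are under control, and H\ref{hyp_transitionKernel} gives the lower bound on the transition density that the oracle inequality requires.

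The second step is to show that the right-hand side tends to $0$. Since $(\majZ_n)_n$ grows slowly enough and $\bigcup_D G_D$ is dense in $\Gamma$ for the $\Lbf^1$ norm, and since $K^\star$ has modulus of continuity $\omega/2$ so that discrete kernels supported on fine grids approximate it in $W_1$ uniformly, one can exhibit, for each fixed ``target accuracy'', a pair $(r,D)$ and an element of $S_{r,D,n}$ whose induced observed-process law is arbitrarily close to $\po^\star$; together with $\pen(n,r,D)\to 0$ this forces the risk to vanish. The key technical point here is that the relaxed condition~\eqref{eq_strong_H1_seq} must be compatible with this approximation: $K^\star$ itself satisfies H\ref{assum::}, hence \eqref{eq_strong_H1} holds near $K^\star$ after choosing $\underline\varphi$ small enough, and the discretized kernels $K_{\Xfrak,Q}$ approaching $K^\star$ will satisfy \eqref{eq_strong_H1_seq} for $n$ large because their Laplace transforms converge uniformly on compacts.

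The third step converts the vanishing risk into weak convergence of the parameters. From $\Kbf(\po^\star,\cdot)\to 0$ one deduces that the law of $(Y_1,Y_2)$ under the estimated parameter converges to its law under $\po^\star$; by tightness (all kernels live on the fixed compact $\Lambda$, and $\Gamma$ is $\Lbf^1$-compact with an integrable envelope, so $\hat\gamma_n$ and the laws $R_{K_{\hat{\Xfrak}_n,\hat Q_n}}$ are tight) every subsequence has a further subsequence along which $R_{K_{\hat{\Xfrak}_n,\hat Q_n}}\Rightarrow R_\infty$ and $\hat\gamma_n \to \gamma_\infty$ in $\Lbf^1$; the limit $R_\infty$ satisfies H\ref{assum::} thanks to the uniform equicontinuity of $\{\Phi_R\}$ on $\Mcal_{\rho,2d}(A,B)$ and the relaxed assumption \eqref{eq_strong_H1_seq} (this is precisely the point of introducing $(\majZ_n)_n$ and $\underline\varphi$). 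Hence $\po_{R_\infty,\gamma_\infty}=\po^\star$, and Theorem~\ref{theoident1} (applied with the stated moment condition $\mu_{K^\star}\in\Mcal_1\subset\Mcal_\rho$, $\rho<2$) together with the centering $\int y\gamma\,\rmd y = 0$ forces $R_\infty = R_{K^\star}$ and $\gamma_\infty=\gamma^\star$; the translation indeterminacy is killed by \eqref{eq_centering}. Since the limit is unique, the whole sequence converges. Finally, $W_1$-convergence of $K_{\hat{\Xfrak}_n,\hat Q_n}(x,\cdot)$ to $K^\star(x,\cdot)$ uniformly on $\Supp(\lambda^\star)$ follows from the uniform modulus of continuity $\omega$ shared by all kernels in the models and the convergence of the joint law $R_{K_{\hat{\Xfrak}_n,\hat Q_n}}$, using that $K^\star(x,\cdot)$ has a density bounded below on $\Supp(\lambda^\star)$; convergence of the stationary laws $\mu_{K_{\hat{\Xfrak}_n,\hat Q_n}}\Rightarrow\mu_{K^\star}$ is then immediate as these are the marginals of $R_{K_{\hat{\Xfrak}_n,\hat Q_n}}$.

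The main obstacle I anticipate is the third step: transferring smallness of the information-theoretic risk $\Kbf$ on the \emph{observed} process into identifiability-usable convergence of $R_{K_{\hat{\Xfrak}_n,\hat Q_n}}$ and $\hat\gamma_n$, while ensuring the limit still satisfies assumption H\ref{assum::}. Assumption H\ref{assum::} is not closed under weak convergence in general, which is exactly why the models carry the reinforced-but-relaxed condition \eqref{eq_strong_H1_seq} with a growing radius $\majZ_n$ and a fixed lower bound $\underline\varphi$; verifying that this machinery is simultaneously (i) satisfiable by the discretizations of $K^\star$ for large $n$, so the approximation step goes through, and (ii) strong enough to pass to the limit, so identifiability applies, is the delicate balancing act, and it is where the slow growth of $(\majZ_n)_n$ and the smallness of $\underline\varphi$ must be pinned down precisely.
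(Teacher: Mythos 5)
Your overall route is the one the paper itself follows: the oracle inequality of \cite{lehericy2018misspe} applied to the models $S_{r,D,n}$, an explicit discretization of $K^\star$ that stays inside the models and is compatible with the relaxed condition~\eqref{eq_strong_H1_seq}, then compactness, subsequence extraction, and identification of the limit through Theorem~\ref{theoident1} with the centering~\eqref{eq_centering} killing the translation indeterminacy. You also correctly isolate the role of $(\majZ_n)_n$ and $\underline{\varphi}$ in making H\ref{assum::} survive the passage to the limit, which is indeed the delicate design point of the models.

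The genuine gap is the bridge between the risk $\Kbf$ and distributional closeness, which you use in both directions without justification. The approximation term in the oracle inequality is an entropy rate $\Kbf(\po_{K^\star,\gamma^\star}\|\po_{\Xfrak,Q,\gamma})$, not a distance between laws: exhibiting elements of $S_{r,D,n}$ whose observed-process law is close to $\po^\star$ (your second step) does not by itself make this term small; and conversely $\Kbf\to 0$ does not, by Pinsker alone, give convergence of the law of $(Y_1,Y_2)$, because $\Kbf$ is a normalized limit of Kullback--Leibler divergences of growing blocks rather than the divergence of the pair marginal. The paper devotes Lemma~\ref{lem_KL_equivalent_VT} to exactly this two-way equivalence; its proof requires the uniform geometric forgetting and mixing coming from the Doeblin-type bounds in H\ref{hyp_transitionKernel} (via a variant of Lemma 3 of \cite{douc:moulines:ryden:04}) and, for the direction ``total variation of marginals small $\Rightarrow$ $\Kbf$ small'', a truncation argument (Lemma 4 of \cite{STG13bayesiandensity}) whose integrability requirement is precisely H\ref{hyp_tails}. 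Your plan never identifies this step, yet it is where H\ref{hyp_tails} and the quantitative part of H\ref{hyp_transitionKernel} are actually consumed; without it, neither the vanishing of the bias term nor the conversion of the vanishing risk into convergence of $R_{K_{\hat{\Xfrak}_n,\hat{Q}_n}}$ and $\hat{\gamma}_n$ is supported. Two smaller omissions: the oracle inequality holds with probability $1-3n^{-2}$, so an explicit Borel--Cantelli argument is needed to reach the almost-sure statement; and the theorem asserts convergence of the law $\po^X_{K_{\hat{\Xfrak}_n,\hat{Q}_n}}$ of the whole stationary chain, which the paper deduces from Theorem 4 and the corollary of Theorem 6 of \cite{karr75}, whereas your argument only covers the stationary marginal $\mu_{K_{\hat{\Xfrak}_n,\hat{Q}_n}}$.
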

The remaining sections of this paper are dedicated to the proof of Theorem~\ref{theoconsi_mle_general}.

\subsection{Proof of Theorem~\ref{theoconsi_mle_general}}
\label{sec:proof:generaltheorem}

This section  states a few intermediate results whose proofs are  postponed to the following sections. These results are followed by the proof of Theorem~\ref{theoconsi_mle_general},  the consistency of the maximum likelihood estimator, which is the main result of this appendix.
Let $\Omega_\omega^C$ be the set of transition kernels $K$ on $\Lambda$ which admit the modulus of continuity $\omega$ with respect to the Wasserstein 1 metric and such that there exists a probability measure $\lambda$ (which may depend on $K$) such that for all $x \in \Lambda$, $K(x,\cdot)$ is absolutely continuous with respect to $\lambda$ with a density taking values in $[1/C,C]$. The kernel $K^\star$ as well as all kernels considered in the models $S_{r,D}$ belong to $\Omega_\omega^C$.

\begin{lem}
\label{lem_compactness}
Assume that $\Omega_\omega^C$ is endowed with the topology of the uniform convergence on the set of continuous functions with values in $(\Pcal(\Lambda), W_1)$, and $\Gamma$ is endowed with the $\Lbf_1$ topology. Then $\Omega_\omega^C \times \Gamma$ endowed with the product topology is compact.
\end{lem}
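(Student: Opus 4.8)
\textbf{Proof plan for Lemma~\ref{lem_compactness}.}
The plan is to prove sequential compactness of each factor separately, since the product of two compact metrizable spaces is compact. Compactness of $\Gamma$ in the $\Lbf^1$ topology is assumed directly in H\ref{hyp_densiteEmission}, so the real work is to show that $\Omega_\omega^C$ is compact for the topology of uniform convergence of the maps $x \in \Lambda \mapsto K(x,\cdot) \in (\Pcal(\Lambda), W_1)$. Note first that $\Lambda = [-L,L]^d$ is compact, so $(\Pcal(\Lambda), W_1)$ is a compact metric space ($W_1$ metrizes weak convergence on a compact base space), and hence $\Omega_\omega^C$ is a subset of $C(\Lambda, (\Pcal(\Lambda), W_1))$, the space of continuous maps from a compact metric space to a compact metric space, endowed with the uniform metric. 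This is itself a metric space, so it suffices to extract, from any sequence $(K_n)_{n \geq 1}$ in $\Omega_\omega^C$, a subsequence converging uniformly to some limit $K_\infty$ which still lies in $\Omega_\omega^C$.

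First I would apply the Arzelà--Ascoli theorem in the form valid for maps into a compact metric space: a subset of $C(\Lambda, (\Pcal(\Lambda),W_1))$ is relatively compact for the uniform topology if and only if it is equicontinuous (pointwise relative compactness of the images being automatic since $(\Pcal(\Lambda),W_1)$ is compact). Every $K \in \Omega_\omega^C$ admits the \emph{same} modulus of continuity $\omega$, i.e. $W_1(K(x,\cdot), K(x',\cdot)) \leq \omega(\|x - x'\|)$ for all $x, x' \in \Lambda$, with $\omega(t) \to 0$ as $t \to 0$; hence $\Omega_\omega^C$ is (uniformly) equicontinuous. Arzelà--Ascoli then yields, from any sequence in $\Omega_\omega^C$, a subsequence $(K_{n_k})_k$ converging uniformly to some continuous map $K_\infty : \Lambda \to (\Pcal(\Lambda), W_1)$. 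It remains to check that $K_\infty \in \Omega_\omega^C$, i.e.\ that (i) $K_\infty$ is a genuine transition kernel (measurability is immediate from continuity), (ii) $K_\infty$ still admits the modulus of continuity $\omega$, and (iii) there is a probability measure $\lambda_\infty$ on $\Lambda$ such that $K_\infty(x,\cdot)$ has a density in $[1/C, C]$ with respect to $\lambda_\infty$ for all $x$. Point (ii) passes to the limit because $W_1(K_{n_k}(x,\cdot),K_{n_k}(x',\cdot)) \leq \omega(\|x-x'\|)$ and $W_1$ is continuous under weak convergence on the compact $\Lambda$.

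The main obstacle is point (iii): preserving the domination property in the limit. The measures $\lambda_n$ (witnessing domination for $K_n$) live in the compact space $\Pcal(\Lambda)$, so after a further subsequence extraction I may assume $\lambda_{n_k} \to \lambda_\infty$ weakly for some $\lambda_\infty \in \Pcal(\Lambda)$. The point is then that the inequalities
\begin{equation*}
\frac{1}{C}\, \lambda_{n_k}(B) \leq K_{n_k}(x, B) \leq C\, \lambda_{n_k}(B)
\end{equation*}
hold for every Borel set $B$, in particular for every open set; taking $k \to \infty$ and using the portmanteau theorem (weak convergence gives $\liminf$ lower bounds on open sets and the $W_1$ convergence of $K_{n_k}(x,\cdot)$ gives the corresponding statements for $K_\infty(x,\cdot)$) yields $\frac{1}{C}\lambda_\infty(U) \leq K_\infty(x,U) \leq C \lambda_\infty(U)$ for all open $U$, hence for all Borel sets by a monotone class / outer regularity argument on the compact metric space $\Lambda$. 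This mutual absolute continuity with uniformly bounded Radon--Nikodym ratio is exactly the condition defining $\Omega_\omega^C$, so $K_\infty \in \Omega_\omega^C$, completing the proof. A minor care point is that the chosen subsequences for the $K_n$ and for the $\lambda_n$ must be taken successively (first for $K$, then for $\lambda$ along it), which is harmless.
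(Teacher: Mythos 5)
Your overall architecture coincides with the paper's: compactness of $\Gamma$ is part of H\ref{hyp_densiteEmission}, the common modulus $\omega$ plus compactness of $(\Pcal(\Lambda),W_1)$ and Arzel\`a--Ascoli give relative compactness of the kernels, and the real issue is closedness of the density-domination property, which you handle by extracting a weak limit $\lambda_\infty$ of the dominating measures $\lambda_{n_k}$. The one step that does not hold as written is the passage to the limit in $\frac{1}{C}\lambda_{n_k}(U) \le K_{n_k}(x,U) \le C\,\lambda_{n_k}(U)$ for open $U$ via the portmanteau theorem. For open sets, portmanteau only yields the one-sided bounds $K_\infty(x,U) \le \liminf_k K_{n_k}(x,U)$ and $\lambda_\infty(U) \le \liminf_k \lambda_{n_k}(U)$; both bounds point in the same direction, so they cannot be chained with the two-sided inequality along the subsequence to produce either $K_\infty(x,U) \ge \frac{1}{C}\lambda_\infty(U)$ or $K_\infty(x,U) \le C\,\lambda_\infty(U)$. (Abstractly: from $a \le \liminf_k a_k$, $b \le \liminf_k b_k$ and $b_k/C \le a_k \le C b_k$ one gets no relation between $a$ and $b$.) Set-wise limits genuinely fail at sets whose boundary is charged, so some device beyond portmanteau-on-open-sets is needed here; this is a real gap, though a localized and fixable one.

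The standard repair --- and the route the paper takes --- is to pass to the limit through continuous test functions rather than sets: for every nonnegative bounded continuous $f$ and every $x \in \Lambda$, $\int f(x') K_{n_k}(x,\rmd x') \in [\frac{1}{C}\int f \,\rmd\lambda_{n_k},\, C\int f \,\rmd\lambda_{n_k}]$, and weak convergence gives genuine convergence (not mere semicontinuity) of both sides, so the two-sided inequality survives in the limit for all such $f$. One then takes a decreasing sequence of continuous functions $f_i \searrow \one_F$ for $F$ closed to obtain $K_\infty(x,F) \in [\lambda_\infty(F)/C,\, C\lambda_\infty(F)]$, and finally invokes regularity of Borel measures on the compact Polish space $\Lambda$ to extend the inequalities to all Borel sets. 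With this substitution your argument is complete and matches the paper's; your remaining points --- successive subsequence extractions, stability of the modulus $\omega$ under uniform limits, measurability of the limit kernel, and compactness of the product --- are all fine.
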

For all probability measures $\mu$ and $\nu$, the Kullback Leibler divergence between $\mu$ and $\nu$ is defined by
\begin{equation*}
KL(\mu \| \nu) = \begin{cases}
\int \log \frac{\text{d} \mu}{\text{d} \nu} \text{d}\mu &\text{when $\mu$ is absolutely continuous with respect to $\nu$}, \\
+\infty &\text{otherwise}.
\end{cases}
\end{equation*}

\begin{lem}
\label{lem_KL_equivalent_VT}
Let $(K_n, \gamma_n)_{n \geq 1} \in (\Omega_\omega^C \times \Gamma)^{\N^*}$. For all $n \geq 1$, the quantity
$
\Kbf(\po_{K^\star,\gamma^\star} \| \po_{K_n,\gamma_n})
	= \lim_{m \rightarrow +\infty} \frac{1}{m} KL(\po^{(m)}_{K^\star,\gamma^\star} \| \po^{(m)}_{K_n,\gamma_n})
$
exists and is finite, and the following two statements are equivalent.
\begin{enumerate}
\item $\Kbf(\po_{K^\star,\gamma^\star} \| \po_{K_n,\gamma_n}) \underset{n \rightarrow \infty}{\longrightarrow} 0$.
\item For all $k \geq 1$, $d_{TV}(\po^{(k)}_{K^\star,\gamma^\star}, \po^{(k)}_{K_n,\gamma_n}) \underset{n \rightarrow \infty}{\longrightarrow} 0$.
\end{enumerate}
\end{lem}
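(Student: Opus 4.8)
The plan rests on the uniform exponential forgetting of the prediction filters. Every transition kernel in play --- $K^\star$ by H\ref{hyp_transitionKernel} and every $K \in \Omega_\omega^C$ --- has a transition density bounded between $1/C$ and $C$ with respect to a dominating probability measure, which is a Doeblin minorization; hence both the filtering and the one-step prediction recursions forget their initialization at a geometric rate $\rho = \rho(C) < 1$, uniformly over all observation sequences. For $(K,\gamma) \in \Omega_\omega^C \times \Gamma$, write
\[
c_j^{K,\gamma} := \Esp_{\po_{K^\star,\gamma^\star}}\!\bigl[ KL\bigl( \po_{K^\star,\gamma^\star}(Y_j \in \cdot \mid Y_{1:j-1}) \,\big\|\, \po_{K,\gamma}(Y_j \in \cdot \mid Y_{1:j-1}) \bigr) \bigr],
\]
so that the chain rule for relative entropy gives $KL(\po^{(m)}_{K^\star,\gamma^\star} \| \po^{(m)}_{K,\gamma}) = \sum_{j=1}^m c_j^{K,\gamma}$. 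Each such one-step conditional density is a mixture $y \mapsto \int_\Lambda \gamma(y - x)\,\nu(\rmd x)$ of translates of $\gamma$, hence takes values in $[m(y),b(y)]$; with $\log t \leq t^\epsilon/\epsilon$ and H\ref{hyp_tails} this yields a finite deterministic bound $c_j^{K,\gamma} \leq \epsilon^{-1}\int b\,(b/m)^\epsilon\,\rmd y =: c_0$ for every $j$. To obtain existence and finiteness of $\Kbf$, I would then use the uniform forgetting to show that for every realization the length-$j$ prediction distributions converge in total variation at rate $O(\rho^j)$ to stationary limits; since the associated densities are again squeezed between $m$ and $b$, so that $KL$ is continuous along total-variation convergent sequences of such densities (again via $\log t \leq t^{\epsilon'}/\epsilon'$ and Hölder's inequality using H\ref{hyp_tails}), this gives $c_j^{K,\gamma} \to c_\infty^{K,\gamma} \in [0,c_0]$ as $j \to \infty$, and a Cesàro argument turns it into $\frac{1}{m} KL(\po^{(m)}_{K^\star,\gamma^\star}\|\po^{(m)}_{K,\gamma}) = \frac{1}{m}\sum_{j=1}^m c_j^{K,\gamma} \to c_\infty^{K,\gamma}$. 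Hence $\Kbf(\po_{K^\star,\gamma^\star}\|\po_{K,\gamma}) = c_\infty^{K,\gamma}$ exists and is finite.

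For $(2)\Rightarrow(1)$, fix $\eta > 0$ and pick $j_0$ with $\sup_{K,\gamma}\lvert c_{j_0}^{K,\gamma} - c_\infty^{K,\gamma}\rvert < \eta/2$ (possible by the uniform forgetting rate). Then $\Kbf(\po_{K^\star,\gamma^\star}\|\po_{K_n,\gamma_n}) = c_\infty^{K_n,\gamma_n} \leq c_{j_0}^{K_n,\gamma_n} + \eta/2 \leq KL(\po^{(j_0)}_{K^\star,\gamma^\star}\|\po^{(j_0)}_{K_n,\gamma_n}) + \eta/2$, the last step again by the chain rule. The joint $j_0$-marginal densities are everywhere positive and trapped between $C^{-j_0}\prod_{l=1}^{j_0} m(y_l)$ and $C^{j_0}\prod_{l=1}^{j_0} b(y_l)$, so the corresponding log-likelihood ratio is bounded in absolute value by $2j_0\log C + \sum_{l=1}^{j_0}\log(b(Y_l)/m(Y_l))$, which lies in $\Lbf^1(\po^{(j_0)}_{K^\star,\gamma^\star})$ by H\ref{hyp_tails}; therefore $d_{TV}(\po^{(j_0)}_{K^\star,\gamma^\star},\po^{(j_0)}_{K_n,\gamma_n}) \to 0$ (assumption $(2)$) forces $KL(\po^{(j_0)}_{K^\star,\gamma^\star}\|\po^{(j_0)}_{K_n,\gamma_n}) \to 0$ by dominated convergence, along subsequences. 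Thus $\limsup_n \Kbf(\po_{K^\star,\gamma^\star}\|\po_{K_n,\gamma_n}) \leq \eta/2$, and letting $\eta \downarrow 0$ yields $(1)$.

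The implication $(1)\Rightarrow(2)$ is the delicate one. I would first prove the claim that \emph{$\Kbf(\po_{K^\star,\gamma^\star}\|\po_{K,\gamma}) = 0$ implies $\po_{K,\gamma} = \po_{K^\star,\gamma^\star}$} for $(K,\gamma)\in\Omega_\omega^C\times\Gamma$: indeed $c_\infty^{K,\gamma} = 0$ means the two stationary prediction filters coincide $\po_{K^\star,\gamma^\star}$-a.s., so the two length-$j$ prediction densities are $O(\rho^j)$-close in $\Lbf^1$ a.s., and a Hölder interpolation between this $\Lbf^1$ gap and the envelope bound upgrades this to $c_j^{K,\gamma} = O(\rho^{\delta j})$ for some $\delta > 0$, which is summable; hence $\sup_m KL(\po^{(m)}_{K^\star,\gamma^\star}\|\po^{(m)}_{K,\gamma}) < \infty$, and a uniform-integrability argument for the likelihood-ratio martingale (using that $t\mapsto t\log t$ is superlinear) gives $\po_{K^\star,\gamma^\star} \ll \po_{K,\gamma}$ on $\sigma(Y_i, i\geq1)$; but the Doeblin minorizations make both $\po_{K^\star,\gamma^\star}$ and $\po_{K,\gamma}$ stationary ergodic for the shift, distinct ergodic laws are mutually singular, which is incompatible with absolute continuity, so $\po_{K,\gamma} = \po_{K^\star,\gamma^\star}$. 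Granting the claim, $(1)\Rightarrow(2)$ follows by compactness: it suffices that every subsequence of $(K_n,\gamma_n)$ have a further subsequence along which $d_{TV}(\po^{(k)}_{K^\star,\gamma^\star},\po^{(k)}_{K_n,\gamma_n}) \to 0$ for all $k$; by Lemma~\ref{lem_compactness} extract a limit $(\bar K,\bar\gamma)\in\Omega_\omega^C\times\Gamma$; continuity of the finite marginals in $(K,\gamma)$ (from $\gamma_n\to\bar\gamma$ in $\Lbf^1$, $K_n\to\bar K$ uniformly for $W_1$, and continuity of the invariant measure) together with continuity of $(K,\gamma)\mapsto c_\infty^{K,\gamma}$ (a uniform limit of the continuous maps $(K,\gamma)\mapsto c_j^{K,\gamma}$) and $(1)$ give $\Kbf(\po_{K^\star,\gamma^\star}\|\po_{\bar K,\bar\gamma}) = 0$; by the claim $\po^{(k)}_{\bar K,\bar\gamma} = \po^{(k)}_{K^\star,\gamma^\star}$ for all $k$, and the convergence along the subsequence follows.

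The step I expect to be the main obstacle is the summability estimate $c_j^{K,\gamma} = O(\rho^{\delta j})$ in the claim: one must convert total-variation proximity of two prediction densities --- both trapped between $m$ and $b$ --- into smallness of their Kullback--Leibler divergence, while the only available tail control is $\int b\,(b/m)^\epsilon\,\rmd y < \infty$ (note that $b/m$ itself need not be $\po_{K^\star,\gamma^\star}$-integrable), which is what forces the Hölder split of the integration domain according to whether the density ratio is close to $1$, together with a matching choice of auxiliary exponent. The uniform geometric forgetting (classical for Doeblin kernels, via a Dobrushin-coefficient bound) and the continuity statements on the compact set $\Omega_\omega^C\times\Gamma$ are the remaining points to check, but these are routine.
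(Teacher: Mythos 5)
Your plan is sound, but it takes a genuinely different route from the paper's, most visibly in the direction $(1)\Rightarrow(2)$. The paper argues directly and quantitatively: a forgetting lemma adapted from Douc--Moulines--Ryd\'en (Lemma~\ref{lem_douc04_revu}) shows that the block increments $KL(\po^{(m+l+k)}_{K^\star,\gamma^\star}\|\po^{(m+l+k)}_{K_n,\gamma_n})-KL(\po^{(m)}_{K^\star,\gamma^\star}\|\po^{(m)}_{K_n,\gamma_n})$ differ from $(l+k)\Kbf$ by at most $2C^4(1-1/C^2)^{m-1}$, and then the entropy chain rule, Pinsker's inequality and the $\phi$-mixing induced by the Doeblin condition turn $\Kbf\to 0$ into $d_{TV}(\po^{(k)}_{K^\star,\gamma^\star},\po^{(k)}_{K_n,\gamma_n})\to 0$ with explicit bounds, without any compactness or continuity of $\Kbf$. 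You instead prove an identifiability-type claim ($\Kbf=0$ forces equality of the laws, via summable one-step conditional divergences, uniform integrability of the likelihood-ratio martingale and mutual singularity of distinct ergodic laws) and conclude by a subsequence argument resting on Lemma~\ref{lem_compactness}, continuity of the finite-dimensional laws (essentially Lemma~\ref{lem_VT_continue}) and continuity of $(K,\gamma)\mapsto\Kbf(\po_{K^\star,\gamma^\star}\|\po_{K,\gamma})$ on $\Omega_\omega^C\times\Gamma$. This is softer and reuses infrastructure already needed elsewhere in the appendix, and it yields as a by-product that $\Kbf$ separates $\po_{K^\star,\gamma^\star}$ within the class; but it is non-quantitative and it charges you with one extra item the paper never needs, namely the continuity of your maps $(K,\gamma)\mapsto c_j^{K,\gamma}$ (hence of $\Kbf$ as their uniform limit), which is provable with the envelope sandwich $m\leq\cdot\leq b$, H\ref{hyp_tails} and an a.e.-convergence-along-subsequences argument, but is not entirely free. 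For $(2)\Rightarrow(1)$ the two proofs share the same skeleton --- reduce to a finite horizon by uniform forgetting, then convert total-variation smallness into Kullback--Leibler smallness using H\ref{hyp_tails} --- except that the paper invokes the truncation inequality of Shen et al.\ while you use dominated convergence with the integrable envelope $\sum_l \log(b/m)(Y_l)$; both work. One simplification worth noting: the step you flag as the main obstacle can be bypassed, since the expected-value version of the forgetting bound (which you already need to choose $j_0$ uniformly) gives $|c_j^{K,\gamma}-c_\infty^{K,\gamma}|\leq 2C^2(1-1/C^2)^{j-2}$ for every $(K,\gamma)$, so $c_\infty^{K,\gamma}=0$ immediately yields a summable, geometrically decaying sequence $c_j^{K,\gamma}$ without any H\"older interpolation between the $\Lbf^1$ gap of the prediction densities and the envelopes.
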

The consistency of the maximum likelihood estimator relies on the following oracle inequality, which follows from \cite[Theorem~8]{lehericy2018misspe}. It is proved in detail in Section~\ref{sec_full_oracle_inequality} how Proposition~\ref{prop_oracle_simplifie} is deduced from \cite[Theorem~8]{lehericy2018misspe} in the setting of this paper.
\begin{prop}

\label{prop_oracle_simplifie}
For each $r, D$ and $n$, let $S_{r,D,n}$ and $(\hat{\Xfrak}_{n}, \hat{Q}_{n}, \hat{\gamma}_{n})$ be defined as in Section~\ref{sec:mle}. There exist constants $C_\pen$, $A$ and $n_0$ such that the following holds. Assume that the penalty satisfies $\pen(n,r,D) \geq C_\pen (\dim_D + rd + r^2 - 1) \log(n)^{14} / n$ for all $n \geq n_0$, $r$ and $D$. Then, for all $n \geq n_0$, with probability at least $1 - 3n^{-2}$,
\begin{multline*}
\Kbf(\po_{K^\star,\gamma^\star} \| \po_{\hat{\Xfrak}_n, \hat{Q}_n, \hat{\gamma}_n})
	\\ \leq 2 \inf_{r \leq \log n, D \text{ s.t.} \dim_D \leq n} \left( 
		\inf_{(\Xcal, Q, \gamma) \in S_{r,D}} \Kbf(\po_{K^\star,\gamma^\star} \| \po_{\Xfrak, Q, \gamma})
		+ 2\pen(n,r,D)
	\right) 
	+ A \frac{(\log n)^9}{n}\eqsp.
\end{multline*}
\end{prop}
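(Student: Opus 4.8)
The plan is to invoke \cite[Theorem~8]{lehericy2018misspe}, which is an oracle inequality for penalized maximum likelihood in misspecified HMMs, and to check that the collection of models $(S_{r,D,n})_{r,D,n}$ defined in Section~\ref{sec:mle} falls within its scope with the announced bracketing-entropy and metric-entropy bounds. First I would recall the hypotheses of \cite[Theorem~8]{lehericy2018misspe}: a uniform lower and upper bound on the emission densities and on the transition matrix coefficients (here guaranteed by the parametrization $Q \in [1/(Cr),C/r]^{r\times r}$ and by H\ref{hyp_densiteEmission}, H\ref{hyp_tails}, which force each $\gamma$ to be squeezed between the positive envelope $m$ and the integrable envelope $b$), together with a control of the bracketing entropy of each model $S_{r,D}$. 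The main bookkeeping step is therefore to bound this entropy: using the Lipschitz parametrizations supplied by H\ref{hyp_modelesParam} (the map $\theta \in \Theta_D \subset [-1,1]^{\dim_D} \mapsto \gamma^\theta(y-x)/b(y)$ is $c(D,A)$-Lipschitz on the region $\{\log(b/m)\le A\}$) and the Lipschitz dependence of the discrete kernel on $(\Xfrak,Q) \in \Lambda^r \times [1/(Cr),C/r]^{r\times r}$ (of dimension $rd + r^2$, minus $r$ for the stochasticity constraints and $d$ for the centering constraint, hence $rd + r^2 - 1$ after also removing one normalization), one gets a covering number of $S_{r,D}$ of order $(\text{poly}(n)/\delta)^{\dim_D + rd + r^2 - 1}$ on the relevant high-probability event, so that the bracketing entropy term in \cite[Theorem~8]{lehericy2018misspe} is $(\dim_D + rd + r^2 - 1)\log(n)$ up to constants and up to the logarithmic factors $(\log n)^{14}$ coming from the dependence structure of the HMM (mixing, truncation of tails via the $[b/m]^\epsilon$-moment of H\ref{hyp_tails}).

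Once the entropy bound is in place, \cite[Theorem~8]{lehericy2018misspe} directly yields: there exist constants $C_\pen$, $A$, $n_0$ such that whenever $\pen(n,r,D) \geq C_\pen(\dim_D + rd + r^2 - 1)\log(n)^{14}/n$, one has, with probability at least $1 - 3n^{-2}$ (the $3n^{-2}$ absorbing the deviation bounds on the empirical process, on the mixing coefficients, and on the tail-truncation event), the stated inequality with the factor $2$ in front of the oracle term and the residual $A(\log n)^9/n$. I would then simply translate the quantity controlled by that theorem, namely an averaged Kullback--Leibler-type divergence between the true stationary law of the observations and the one induced by the estimated parameters, into the notation $\Kbf(\po_{K^\star,\gamma^\star}\|\cdot)$; the existence and finiteness of this limit is exactly Lemma~\ref{lem_KL_equivalent_VT}, so no separate argument is needed. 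The restriction of the infimum to $r\le \log n$ and $\dim_D \le n$ matches the range over which $(\hat r_n,\hat D_n)$ is selected, so the oracle term on the right is over the same collection.

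The main obstacle is the faithful verification that the present setting is a genuine special case of \cite[Theorem~8]{lehericy2018misspe}: that paper is written for a fixed abstract collection of models with bracketing-entropy assumptions, and one must carefully match (i) the emission-density regularity and envelope conditions to H\ref{hyp_densiteEmission}--H\ref{hyp_tails}, (ii) the parametric-complexity assumption to H\ref{hyp_modelesParam}, and (iii) the precise form of the penalty and of the constants. This matching — and in particular tracking how the $(\log n)^{14}$ and the $(\log n)^9$ arise and why the model dimension is exactly $\dim_D + rd + r^2 - 1$ — is the content of Section~\ref{sec_full_oracle_inequality}, and is where the real work lies; the probabilistic heart (chaining, Bernstein-type deviation inequalities under mixing) is entirely outsourced to \cite{lehericy2018misspe}.
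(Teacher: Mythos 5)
Your overall route is the same as the paper's: verify the forgetting/mixing/tail/ergodicity assumptions of \cite[Theorem~8]{lehericy2018misspe} from H\ref{hyp_transitionKernel}--H\ref{hyp_tails}, bound the bracketing entropy of each model through the Lipschitz parametrization of H\ref{hyp_modelesParam} (joint covering of $\Lambda^r \times \Theta_D$, giving $\dim_D + rd$ emission parameters plus $r^2-1$ for the hidden chain), and then apply the oracle inequality with $t \asymp \log n$ so that $1-e^{-t}-2n^{-\alpha}$ becomes $1-3n^{-2}$ and the residual $At(\log n)^8/n$ becomes $A(\log n)^9/n$. This is exactly what Section~\ref{sec_full_oracle_inequality} does, so the plan is sound in outline.

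There are, however, two points where your "direct citation" claim hides real work. First, as the paper points out, the entropy and growth assumptions of \cite{lehericy2018misspe} \emph{as published} would yield a penalty proportional to $r\dim_D + rd + r^2$, not $\dim_D + rd + r^2 - 1$: the per-state formulation of the entropy condition multiplies the emission complexity by $r$. To get the stated penalty one must replace those assumptions by the joint bracketing condition (\textbf{[Aentropy']} in the appendix), with the truncation sets $\Scal_{n,A}=\{\log(b/m)\leq A\}$ and the growth control $\log C_\text{aux} \lesssim (\log n)^2\log\log n$, and argue that the proof of Theorem~8 goes through unchanged with this substitution and with the model selection restricted to $\Pfrak_n=\{r\leq\log n,\ \dim_D\leq n\}$. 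Your joint covering-number computation is precisely the right input for this, but saying that Theorem~8 "directly yields" the result skips the step of justifying the modified assumption; this is the genuinely nontrivial part of the verification. Second, your dimension bookkeeping aside ("minus $r$ for the stochasticity constraints and $d$ for the centering constraint, hence $rd+r^2-1$") does not add up; in the paper the count is simply $m_{r,D}=\dim_D+rd$ for the emission part (positions $\Xfrak\in\Lambda^r$ plus $\theta\in\Theta_D$) and $r^2-1$ for the chain as it appears in the penalty of Theorem~8, with no cancellation against centering or stochasticity. This slip is not load-bearing since your final count is correct, but the heuristic justification you give for it is wrong.
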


\begin{lem}
\label{lem_VT_continue}
Let $(K_n, \gamma_n)_{n \geq 1} \in (\Omega_\omega^C \times \Gamma)^{\N^*}$ be a sequence that converges to $(K,\gamma)$. Then, for all $k \geq 1$, $d_{TV}(\po^{(k)}_{K,\gamma}, \po^{(k)}_{K_n,\gamma_n}) \underset{n \rightarrow \infty}{\longrightarrow} 0$.
\end{lem}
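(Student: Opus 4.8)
The plan is to show that $\|p^{(k)}_{K,\gamma}-p^{(k)}_{K_n,\gamma_n}\|_1 \to 0$, where $p^{(k)}_{K,\gamma}$ denotes the Lebesgue density of $\po^{(k)}_{K,\gamma}$ on $\R^{dk}$; this density exists because $\po^{(k)}_{K,\gamma}$ is the convolution of the law $\nu^{(k)}_K$ of $(X_1,\dots,X_k)$ under the chain with kernel $K$ with the product emission density, so that for $y_{1:k}\in\R^{dk}$,
\[
p^{(k)}_{K,\gamma}(y_{1:k})=\int_{\Lambda^k}\prod_{j=1}^k\gamma(y_j-x_j)\,\nu^{(k)}_K(\rmd x_{1:k})\eqsp,
\]
and similarly for $p^{(k)}_{K_n,\gamma_n}$; recall that $d_{TV}$ of two such measures equals, up to a factor $2$, the $\Lbf^1$ distance of their densities, and that $(K,\gamma)\in\Omega_\omega^C\times\Gamma$ by Lemma~\ref{lem_compactness}. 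I would split $\|p^{(k)}_{K,\gamma}-p^{(k)}_{K_n,\gamma_n}\|_1\le \|p^{(k)}_{K,\gamma}-p^{(k)}_{K_n,\gamma}\|_1+\|p^{(k)}_{K_n,\gamma}-p^{(k)}_{K_n,\gamma_n}\|_1$ and handle the two terms separately.

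For the second term the kernel is frozen: I would expand $\prod_j\gamma(y_j-x_j)-\prod_j\gamma_n(y_j-x_j)$ as a telescoping sum and integrate in $y_{1:k}$ by Tonelli — the integrals over the coordinates $j\ne i$ factor out and equal $1$ by translation invariance of the Lebesgue measure, while $\nu^{(k)}_{K_n}$ is a probability measure — to obtain $\|p^{(k)}_{K_n,\gamma}-p^{(k)}_{K_n,\gamma_n}\|_1\le k\,\|\gamma-\gamma_n\|_1$, which tends to $0$ uniformly in $n$ since $\gamma_n\to\gamma$ in $\Lbf^1$.

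For the first term the emission density is frozen and I would prove that $\nu^{(k)}_{K_n}\to\nu^{(k)}_K$ weakly. First, $\mu_{K_n}\to\mu_K$ weakly: $\Pcal(\Lambda)$ is weakly compact, and any subsequential weak limit $\mu_\infty$ of $(\mu_{K_n})$ is $K$-invariant, because $K_nf\to Kf$ uniformly for every Lipschitz $f$ on $\Lambda$ (by Kantorovich--Rubinstein duality, $\sup_x|K_nf(x)-Kf(x)|\le\mathrm{Lip}(f)\sup_x W_1(K_n(x,\cdot),K(x,\cdot))\to0$), so that $\mu_{K_n}(f)=\mu_{K_n}(K_nf)\to\mu_\infty(Kf)$ while also $\mu_{K_n}(f)\to\mu_\infty(f)$; since every kernel of $\Omega_\omega^C$ satisfies a uniform Doeblin minorization it has a unique invariant measure, hence $\mu_\infty=\mu_K$ and the whole sequence converges. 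This propagates by induction on $k$: writing $\nu^{(k)}_{K_n}(F)=\nu^{(k-1)}_{K_n}(g_n)$ with $g_n(x_{1:k-1})=\int K_n(x_{k-1},\rmd x_k)F(x_{1:k})$ for $F$ bounded Lipschitz on $\Lambda^k$, one has $g_n\to g$ uniformly and $g$ bounded continuous, so $\nu^{(k)}_{K_n}(F)\to\nu^{(k)}_K(F)$, and bounded Lipschitz functions are convergence-determining. Since for each fixed $y_{1:k}$ the function $x_{1:k}\mapsto\prod_j\gamma(y_j-x_j)$ is continuous and bounded (by $\|b\|_\infty^k$, with $b$ the envelope of assumption H\ref{hyp_densiteEmission}) on the compact set $\Lambda^k$, weak convergence yields $p^{(k)}_{K_n,\gamma}(y_{1:k})\to p^{(k)}_{K,\gamma}(y_{1:k})$ for every $y_{1:k}$; as these are probability densities, Scheffé's lemma upgrades this pointwise convergence to $\|p^{(k)}_{K_n,\gamma}-p^{(k)}_{K,\gamma}\|_1\to0$.

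The main obstacle is the continuity of $K\mapsto\nu^{(k)}_K$ on $\Omega_\omega^C$, and in particular the convergence $\mu_{K_n}\to\mu_K$ of the invariant measures: the uniform minorization built into $\Omega_\omega^C$ is precisely what makes the $\mu_{K_n}$ well defined and pins down the subsequential limits, and the weak convergence must then be pushed through $k-1$ successive applications of $K_n$. The telescoping bound for the $\gamma$-part and the concluding Scheffé argument are routine.
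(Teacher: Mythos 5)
Your proof is correct, and its overall architecture coincides with the paper's: the same splitting of the $\Lbf^1$ distance into a term where $\gamma$ is frozen and the kernel varies and a term where the kernel is frozen and $\gamma$ varies, with the latter handled by the identical telescoping bound $k\|\gamma-\gamma_n\|_1$. The genuine difference is in the kernel term. The paper obtains weak convergence of the law of the hidden chain by citing Karr (1975, Theorem 4 and the corollary of Theorem 6), which yields convergence of the whole process law $\po^X_{K_n}\to\po^X_K$ from uniform convergence of the kernels plus uniqueness of the stationary distribution of the limit; you instead give a self-contained argument at the level of the $k$-dimensional marginals: compactness of $\Pcal(\Lambda)$, the identification of any subsequential limit of $(\mu_{K_n})$ as $K$-invariant via Kantorovich--Rubinstein duality and the uniform $W_1$-convergence $K_n\to K$, uniqueness of the invariant measure from the Doeblin minorization built into $\Omega_\omega^C$, and then an induction pushing the weak convergence through $K_n$ one step at a time. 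This buys independence from the external reference (at the cost of a slightly longer argument), and it isolates exactly which structural features of $\Omega_\omega^C$ are used (equicontinuity in $W_1$ for the duality step, the minorization for uniqueness); the paper's citation is shorter and gives process-level convergence, more than is needed here. Your final step via Scheff\'e's lemma is a minor variant of the paper's dominated-convergence argument with dominating function $\prod_i b(y_i)$; both are valid, and your version only uses that the objects are probability densities rather than the integrability of the envelope $b$.
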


\begin{lem}
\label{lem_approx_noyau_existe}
There exists a sequence $(\Xfrak_t, Q_t, \gamma_t)_{t \geq 1}$ taking values in $\bigcup_{r \geq 1, D \geq 1} \Sbf_{r,D}$ such that
$\Kbf(\po_{K^\star,\gamma^\star} \| \po_{\Xfrak_t,Q_t,\gamma_t}) \underset{t \rightarrow \infty}{\longrightarrow} 0$
and
$R_{K_{\Xfrak_t,Q_t}} \underset{t \rightarrow \infty}{\longrightarrow} R_{K^\star}$
in distribution.
\end{lem}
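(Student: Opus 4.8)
I would split the statement into its two convergences and handle the first, $\Kbf(\po_{K^\star,\gamma^\star}\|\po_{\Xfrak_t,Q_t,\gamma_t})\to 0$, by combining Lemmas~\ref{lem_KL_equivalent_VT} and~\ref{lem_VT_continue}: it suffices to exhibit $(\Xfrak_t,Q_t,\gamma_t)\in\bigcup_{r,D}\Sbf_{r,D}$ such that $(K_{\Xfrak_t,Q_t},\gamma_t)\to(K^\star,\gamma^\star)$ in $\Omega_\omega^C\times\Gamma$, i.e. $\sup_{x\in\Lambda}W_1(K_{\Xfrak_t,Q_t}(x,\cdot),K^\star(x,\cdot))\to 0$ and $\|\gamma_t-\gamma^\star\|_1\to 0$; indeed Lemma~\ref{lem_VT_continue} then gives $d_{TV}(\po^{(k)}_{K^\star,\gamma^\star},\po^{(k)}_{K_{\Xfrak_t,Q_t},\gamma_t})\to 0$ for every $k$, and Lemma~\ref{lem_KL_equivalent_VT} converts this into $\Kbf\to 0$. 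The noise part is immediate: since $\bigcup_D G_D$ is dense in $\Gamma$ for the $\Lbf^1$ norm (assumption H\ref{hyp_modelesParam}) and every element of $G_D$ is centered, I pick $\gamma_t\in G_{D_t}$ with $\|\gamma_t-\gamma^\star\|_1\le 1/t$ and $D_t\to\infty$. It then remains to construct the discrete kernels and, separately, to deduce $R_{K_{\Xfrak_t,Q_t}}\to R_{K^\star}$ from the uniform $W_1$-convergence of the kernels.

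\textbf{Construction of the discrete kernels.} Because $K^\star(x,\cdot)\ll\lambda^\star$ for every $x$ (assumption H\ref{hyp_transitionKernel}), all the measures $K^\star(x,\cdot)$ are carried by $\Supp(\lambda^\star)$, so discretizing $\Supp(\lambda^\star)$ suffices. I would fix a mesh $\delta_t\downarrow 0$, choose points $\Xfrak_t=(x_1^{(t)},\dots,x_{r_t}^{(t)})$ and a Lipschitz partition of unity $(\psi_z^{(t)})_{z=1}^{r_t}$ on $\Lambda$ subordinate to cells of diameter at most $\delta_t$, with $\psi_z^{(t)}(x_{z'}^{(t)})=\ind\{z=z'\}$ and chosen adapted to $\lambda^\star$ so that the masses $w_z^{(t)}:=\int_\Lambda\psi_z^{(t)}\,\rmd\lambda^\star$ all lie in $[1/(2r_t),2/r_t]$ (routine when $\lambda^\star$ is diffuse), then set $Q_t(z,z')=\int_\Lambda\psi_{z'}^{(t)}(y)\,K^\star(x_z^{(t)},\rmd y)$, which is a transition matrix equal, on the grid, to the image of $K^\star(x_z^{(t)},\cdot)$ under the projection $\Pi_t:\mu\mapsto\sum_{z'}\big(\int\psi_{z'}^{(t)}\,\rmd\mu\big)\delta_{x_{z'}^{(t)}}$. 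Three checks are then needed. First, the entrywise bounds $Q_t(z,z')\in[1/(Cr_t),C/r_t]$: since the density of $K^\star(x_z^{(t)},\cdot)$ with respect to $\lambda^\star$ lies in $[2/C,C/2]$, one gets $Q_t(z,z')\in[\tfrac2C w_{z'}^{(t)},\tfrac C2 w_{z'}^{(t)}]\subset[1/(Cr_t),C/r_t]$ --- this is exactly where the factor-$2$ margins in assumption H\ref{hyp_transitionKernel} are used, and the same bound shows $K_{\Xfrak_t,Q_t}\in\Omega_\omega^C$ (with dominating measure $r_t^{-1}\sum_z\delta_{x_z^{(t)}}$). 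Second, the map $x_z^{(t)}\mapsto\sum_{z'}Q_t(z,z')\delta_{x_{z'}^{(t)}}$ admits the modulus $\omega$: since $\Pi_t$ contracts $W_1$ up to a dimensional constant and $K^\star$ has modulus $\omega/2$, one bounds $W_1(\Pi_t K^\star(x_z^{(t)},\cdot),\Pi_t K^\star(x_{z''}^{(t)},\cdot))$ by $\omega(\|x_z^{(t)}-x_{z''}^{(t)}\|)$ after absorbing the constant into the remaining slack. Third, uniform convergence: $W_1(\Pi_t K^\star(x,\cdot),K^\star(x,\cdot))\le\delta_t$ at a grid point, and at a general $x\in\Lambda$ one interpolates and uses the moduli $\omega/2$ and $\omega$ of $K^\star$ and $K_{\Xfrak_t,Q_t}$ to conclude $\sup_{x\in\Lambda}W_1(K_{\Xfrak_t,Q_t}(x,\cdot),K^\star(x,\cdot))\to 0$. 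Then $(\Xfrak_t,Q_t,\gamma_t)\in\Sbf_{r_t,D_t}$ and the first convergence follows.

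\textbf{Convergence of $R_{K_{\Xfrak_t,Q_t}}$.} All the kernels $K_{\Xfrak_t,Q_t}$ and $K^\star$ lie in $\Omega_\omega^C$, hence satisfy a Doeblin minorization with constant $1/C$ and are uniformly ergodic with a common rate $(1-1/C)^n$; in particular each admits a unique stationary law on the compact $\Lambda$. The family $(\mu_{K_{\Xfrak_t,Q_t}})_t$ is tight; along any weakly convergent subsequence with limit $\nu$, the uniform $W_1$-convergence of the kernels gives $\mu_{K_{\Xfrak_t,Q_t}}K_{\Xfrak_t,Q_t}\to\nu K^\star$ weakly (testing against Lipschitz functions), while the left-hand side equals $\mu_{K_{\Xfrak_t,Q_t}}\to\nu$, so $\nu=\nu K^\star$ and thus $\nu=\mu_{K^\star}$ by uniqueness; hence $\mu_{K_{\Xfrak_t,Q_t}}\to\mu_{K^\star}$ weakly. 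Writing $R_{K_{\Xfrak_t,Q_t}}(\rmd x,\rmd y)=\mu_{K_{\Xfrak_t,Q_t}}(\rmd x)K_{\Xfrak_t,Q_t}(x,\rmd y)$ and splitting $\int f\,\rmd R_{K_{\Xfrak_t,Q_t}}-\int f\,\rmd R_{K^\star}$ (for $f$ bounded Lipschitz) into a term controlled by $\sup_x W_1(K_{\Xfrak_t,Q_t}(x,\cdot),K^\star(x,\cdot))$ and a term controlled by $\mu_{K_{\Xfrak_t,Q_t}}\to\mu_{K^\star}$ yields $R_{K_{\Xfrak_t,Q_t}}\to R_{K^\star}$ in distribution.

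\textbf{Main obstacle.} The delicate point is the joint bookkeeping in the kernel construction: one must keep every row of $Q_t$ inside the narrow band $[1/(Cr_t),C/r_t]$ \emph{and} preserve the modulus of continuity $\omega$ while the mesh $\delta_t\to 0$. This is feasible precisely because assumption H\ref{hyp_transitionKernel} leaves a factor-$2$ margin on both the density bounds and the modulus, but it forces a careful choice of discretization: the grid must be fine, $\lambda^\star$-balanced so that the hat masses $w_z^{(t)}$ are of order $1/r_t$, and the partition of unity chosen so that the dimensional Lipschitz constant of $\Pi_t$ is absorbed by the slack; the only genuinely non-routine subcase is that of an atomic $\lambda^\star$.
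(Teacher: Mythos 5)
Your overall route is the same as the paper's: reduce, via Lemmas~\ref{lem_KL_equivalent_VT} and~\ref{lem_VT_continue}, to constructing finite-support kernels in $\bigcup_{r,D}\Sbf_{r,D}$ that converge uniformly in $W_1$ to $K^\star$ (the emission part being immediate from the density of $\bigcup_D G_D$ in $\Gamma$), discretize $\Supp(\lambda^\star)$ into mass-balanced cells so that the factor-$2$ margins in H\ref{hyp_transitionKernel} yield the entrywise bounds $[1/(Cr),C/r]$ and the modulus $\omega$, and deduce $R_{K_{\Xfrak_t,Q_t}}\to R_{K^\star}$ from the uniform kernel convergence together with uniqueness of the stationary law (the paper invokes \cite{karr75}; your tightness/fixed-point argument is an acceptable substitute).

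The genuine gap is the one you flag yourself and then leave open: the case where $\lambda^\star$ is not diffuse, or more generally not absolutely continuous with respect to the Lebesgue measure. Assumption H\ref{hyp_transitionKernel} puts no restriction on $\lambda^\star$ --- it may have atoms or a singular continuous part --- and then no fine partition of $\Supp(\lambda^\star)$ with all masses in $[1/(2r),2/r]$ need exist (an atom of mass larger than $2/r$ cannot be split), so your construction of $(\Xfrak_t,Q_t)$ fails precisely where the density-bound check needs it. The paper's fix is a preliminary smoothing step: for $\epsilon>0$, replace $K^\star$ by $K^\star * U_\epsilon$, the convolution of each row with the uniform law on $[-\epsilon,\epsilon]^d$. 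Since $W_1(\mu*U_\epsilon,\nu*U_\epsilon)\le W_1(\mu,\nu)$, this kernel keeps the modulus $\omega$, it has a density with values in $[2/C,C/2]$ with respect to $\lambda^\star * U_\epsilon$, which \emph{is} absolutely continuous, so it belongs to $\Omega_\omega^C$ (up to enlarging $\Lambda$), and $K^\star*U_\epsilon\to K^\star$ uniformly as $\epsilon\to 0$; one then discretizes the smoothed kernels (Lemma~\ref{lemma_discretisation_support}) and concludes by a diagonal argument. Without this (or an equivalent) reduction, your proof only covers a strict subclass of the kernels allowed by H\ref{hyp_transitionKernel}. A second, smaller, imprecision: your modulus check appeals to a ``dimensional constant'' for the contraction of $\Pi_t$, but the relevant error is additive, of order the cell diameter $D_t$, and upgrading $\omega/2$ to $\omega$ requires a minimum separation between the points at which the modulus is tested; the paper enforces this by first defining $\tilde K_t$ on a grid of spacing $\eta_t\ge 4D_t/\inf_{u\in(0,\mathrm{diam}(\Lambda)]}\omega(u)/u$ and only then extending to a kernel $K_t\in\Omega_\omega^C$ from which $Q_t$ is read off at the support points. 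This detail is repairable within your scheme, but as written the ``absorb the constant into the slack'' step does not go through for support points that happen to be arbitrarily close to each other.
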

Let us now discuss the choice of $\majZ'$, $(\majZ_n)_n$ and $\underline{\varphi}$ in equation~\eqref{eq_strong_H1_seq}. Let $\majZ'$ be a positive real number. $(\majZ_n)_n$ and $\underline{\varphi}$ are chosen such that there exists sequences $r_n, t_n \rightarrow +\infty$ with $r_n \leq \log n$ for $n$ large enough such that $(\Xfrak_{t_n}, Q_{t_n}, \gamma_{t_n}) \in S_{r_n,D,n}$ for all $n$ (the choice of $D$ does not matter since $(\gamma_t)_t$ can be replaced by any sequence that converges to $\gamma^\star$). Let us show that such a choice is possible. Let $t_n \rightarrow \infty$ and $(r_n)_n$ be such that $r_n \leq \log n$ and $(\Xfrak_{t_n},Q_{t_n},\gamma_{t_n}) \in \bigcup_{D \geq 1} \Sbf_{r_n,D}$ for all $n$ large enough. With the notation $\Mcal_{\rho,d'}(A,B)$ defined in Section~\ref{sec:least:squares}, the sequence $(R_{K_{\Xfrak_t,Q_t}})_{t \geq 1}$ takes values in $\Mcal_{1,2d}(1,L\sqrt{2d})$. By equicontinuity of $\{\Phi_R : R \in \Mcal_{1,2d}(1,L\sqrt{2d})\}$, the convergence
\begin{equation*}
\Phi_{R_{K_{\Xfrak_t,Q_t}}} \underset{t \rightarrow \infty}{\longrightarrow} \Phi_{R_{K^\star}}
\end{equation*}
holds uniformly over all compacts of $\C^{2d}$. Take $\underline{\varphi}$ such that equation~\eqref{eq_strong_H1} holds with $\majZ^{eq.\eqref{eq_strong_H1}} = \majZ'$ and $\underline{\varphi}^{eq.\eqref{eq_strong_H1}} = 2 \underline{\varphi}$ since $R_{K^\star}$ satisfies equation~\eqref{eq_strong_H1}. Then the uniform convergence over all compacts entails that by choosing $\majZ_n \rightarrow \infty$ slowly enough, the desired property holds.

Theorem~\ref{theoconsi_mle_general} may now be proved. Proposition~\ref{prop_oracle_simplifie} actually gives a deterministic function $f : \N^* \longrightarrow \R_+$ such that for all $n \geq n_0$, with probability at least $1 - 3n^{-2}$,
\begin{equation*}
\Kbf(\po_{K^\star,\gamma^\star} \| \po_{\hat{\Xfrak}_n, \hat{Q}_n, \hat{\gamma}_n}) \leq f(n)\eqsp.
\end{equation*}
By the previous paragraph and the assumption that $\pen(n,r,D)$ goes to zero as $n$ goes to infinity for each $r$ and $D$, $f \rightarrow 0$. Hence, by Borel-Cantelli's Lemma, almost surely,
\begin{equation*}
\Kbf(\po_{K^\star,\gamma^\star} \| \po_{\hat{\Xfrak}_n, \hat{Q}_n, \hat{\gamma}_n}) \underset{n \rightarrow \infty}{\longrightarrow} 0\eqsp.
\end{equation*}
Thus, by Lemma~\ref{lem_KL_equivalent_VT}, almost surely, for all $k \geq 1$,
\begin{equation*}
d_\text{TV} \left(\po^{(k)}_{K^\star,\gamma^\star}, \po^{(k)}_{\hat{\Xfrak}_n, \hat{Q}_n, \hat{\gamma}_n}\right)
	\underset{n \rightarrow +\infty}{\longrightarrow} 0\eqsp.
\end{equation*}
In particular, by Lemma~\ref{lem_VT_continue}, all limits $(K,\gamma)$ of convergent subsequences of $(K_{\hat{\Xfrak}_n, \hat{Q}_n}, \hat{\gamma}_n)_n$ satisfy $\po^{(2)}_{K^\star,\gamma^\star} = \po^{(2)}_{K,\gamma}$. Since the support of $X$ is in the known compact set $\Lambda$, $\mu_K \in \Mcal_1$. Moreover, equation~\eqref{eq_strong_H1_seq} entails that $K$ satisfies H\ref{assum::}.
Since the translation parameter is fixed by the centering condition on the densities, Theorem~\ref{theoident1} ensures that $R_{K^\star} = R_K$ and $\gamma = \gamma^\star$. Therefore, using the continuity of $K$ and $K^\star$, it follows that $K(x,\cdot) = K^\star(x,\cdot)$ for all $x \in \Supp(\lambda^\star)$.
Since the set of parameters is compact by Lemma~\ref{lem_compactness}, the estimators converge to the true parameters, which is the first part of Theorem~\ref{theoconsi_mle_general}. Finally, since $K^\star$ admits a unique stationary distribution, Theorem 4 and the corollary of Theorem 6 of \cite{karr75} entail that
\begin{equation*}
\po^X_{K_{\hat{\Xfrak}_n, \hat{Q}_n}} \overset{(d)}{\underset{n \rightarrow \infty}{\longrightarrow}} \po^X_{K^\star}\eqsp,
\end{equation*}
which concludes the proof of Theorem~\ref{theoconsi_mle_general}.

\subsection{Proof of Lemma~\ref{lem_compactness}}

Let $\Omega_\omega$ be the set of transition kernels on $\Lambda$ which admit the modulus of continuity $\omega$ with respect to the Wasserstein 1 metric. $\Omega_\omega$ is an equicontinuous family of functions from $\Lambda$ to the set of probability measures $\Pcal(\Lambda)$ on $\Lambda$ endowed with the Wasserstein 1 metric. Since $\Lambda$ is compact, convergence in Wasserstein distance is equivalent to convergence in distribution and $\Pcal(\Lambda)$ is compact for the topology of the convergence in distribution, so that Arzelà-Ascoli's theorem ensures that $\Omega_\omega$ is relatively compact in the class of continuous functions from $\Lambda$ to $(\Pcal(\Lambda), W_1)$ with respect to the uniform convergence distance. It is closed, therefore it is compact.

Recall that $\Omega_\omega^C$ is the subset of $\Omega_\omega$ such that $K \in \Omega_\omega^C$ if and only if there exists a probability measure $\lambda$ such that for all $x \in \Lambda$, $K(x,\cdot)$ is absolutely continuous with respect to $\lambda$ with a density taking values in $[1/C,C]$. Let us show that it is closed. Let $(K_n)_{n \geq 1}$ be a convergent sequence in $\Omega_\omega^C$ and $(\lambda_n)_{n \geq 1}$ the associated probability measures. Write $K \in \Omega_\omega$ its limit. Without loss of generality, it is possible to assume that $\lambda_n \longrightarrow_{} \lambda$ for some $\lambda \in \Pcal(\Lambda)$ as $n$ grows to $+\infty$. Let $\Ccal^0_{b,+}$ be the set of real-valued, nonnegative, bounded and continuous function on $\Lambda$, then for all $f \in \Ccal^0_{b,+}$ and all $x \in \Lambda$,
\begin{align*}
\int K_n(x,\rmd x') f(x') \in \left[\frac{1}{C}\int f \rmd\lambda,C\int f \rmd\lambda\right]
\end{align*}
by definition of $\Omega_\omega^C$. Then, using the convergence of the sequences, for all $f \in \Ccal^0_{b,+}$ and all $x \in \Lambda$,
\begin{align*}
\int K(x,\rmd x') f(x') \in \left[\frac{1}{C}\int f \rmd\lambda,C\int f \rmd\lambda\right] \eqsp.
\end{align*}
For all closed set $F \subset \Lambda$, there exists a sequence $(f_i)_{i \geq 1} \searrow \one_F$. Therefore, for all closed set $F \subset \Lambda$ and all $x \in \Lambda$,
\begin{align*}
K(x,F) \in \left[\frac{\lambda(F)}{C},C\lambda(F)\right] \eqsp.
\end{align*}
Thus, using the regularity of Borel probability measures on polish spaces, the same holds for all measurable sets, so that $K \in \Omega_\omega^C$. Therefore, $\Omega_\omega^C$ is closed, so that it is compact.

\subsection{Proof of Lemma~\ref{lem_KL_equivalent_VT}}
\label{preuve_lem_KL_equivalent_VT}

The following lemma follows from the proof of Lemma 3 of \cite{douc:moulines:ryden:04}. In this section only, for all integers $a \leq b$, write $Y_a^b$ instead of $(Y_a, \dots, Y_b)$.
\begin{lem}
\label{lem_douc04_revu}
Assume that assumption H\ref{hyp_densiteEmission} holds. By stationarity, extend the process $(Y_t)_{t \geq 1}$ into a process $(Y_t)_{t \in \Zbb}$. Let $K, K' \in \Omega_\omega^C$ and $\gamma, \gamma' \in \Gamma$.
Then, there exists random variables $\delta_{k, \infty}(K,\gamma)$ and $\delta_{k, \infty}(K',\gamma')$ such that almost surely, for all $k \in \Zbb$ and $m \geq 0$,
\begin{equation*}
\left|
	\log \frac{p_{Y_{k} | Y_{k-m}^{k-1}, K,\gamma}(Y_{k} | Y_{k-m}^{k-1})}{p_{Y_{k} | Y_{k-m}^{k-1}, K',\gamma'}(Y_{k} | Y_{k-m}^{k-1})}
	- \log \frac{\delta_{k, \infty}(K,\gamma)}{\delta_{k, \infty}(K',\gamma')}
\right| \leq 2 C^2 \left(1 - \frac{1}{C^2}\right)^{m-1}\eqsp,
\end{equation*}
and for all $k \in \Zbb$,
\begin{equation*}
\left(\sup_{m \geq 0} \left| \log \frac{p_{Y_{k} | Y_{k-m}^{k-1}, K,\gamma}(Y_{k} | Y_{k-m}^{k-1})}{p_{Y_{k} | Y_{k-m}^{k-1}, K',\gamma'}(Y_{k} | Y_{k-m}^{k-1})} \right| \right)
	\vee \left| \log \frac{\delta_{k, \infty}(K,\gamma)}{\delta_{k, \infty}(K',\gamma')} \right|
	\in \Lbf^1(\po^\star)\eqsp.
\end{equation*}
\end{lem}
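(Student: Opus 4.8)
The plan is to transpose to the present framework the forgetting estimates from the proof of Lemma~3 of \cite{douc:moulines:ryden:04}, the only new ingredient being the use of H\ref{hyp_densiteEmission} and H\ref{hyp_tails} to handle the possibly heavy tails of the densities in $\Gamma$. Fix $K, K' \in \Omega_\omega^C$ and $\gamma, \gamma' \in \Gamma$, and let $\lambda$ (resp.\ $\lambda'$) be a probability measure on $\Lambda$ such that, for every $x \in \Lambda$, $K(x,\cdot)$ (resp.\ $K'(x,\cdot)$) admits a density $\kappa_x$ with values in $[1/C,C]$ with respect to $\lambda$ (resp.\ $\lambda'$), as in the definition of $\Omega_\omega^C$. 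The starting point is the identity, valid for every $m \geq 0$,
\begin{equation*}
p_{Y_k \mid Y_{k-m}^{k-1}, K,\gamma}(Y_k \mid Y_{k-m}^{k-1}) = \int_\Lambda g_{K,\gamma}(x)\, \phi_m^{K,\gamma}(\rmd x)\eqsp, \qquad g_{K,\gamma}(x) := \int_\Lambda \gamma(Y_k - x')\, K(x,\rmd x')\eqsp,
\end{equation*}
where, under $\po_{K,\gamma}$, $\phi_m^{K,\gamma}$ is the filtering distribution of $X_{k-1}$ given $Y_{k-m}^{k-1}$ for $m \geq 1$, and $\phi_0^{K,\gamma} := \mu_K$ (the case $m = 0$ then follows from stationarity, $\mu_K K = \mu_K$). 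I would first record the two consequences of the $[1/C,C]$-density structure of $\Omega_\omega^C$ that drive the argument. First, writing $K(x,\rmd x') = \kappa_x(x')\,\lambda(\rmd x')$ shows $g_{K,\gamma}(x) \in [a_{K,\gamma}/C,\ C\,a_{K,\gamma}]$ for every $x \in \Lambda$, with $a_{K,\gamma} := \int_\Lambda \gamma(Y_k - x')\,\lambda(\rmd x')$; in particular $\sup_\Lambda g_{K,\gamma} / \inf_\Lambda g_{K,\gamma} \leq C^2$, a \emph{deterministic} bound independent of $\gamma$ and of $Y_k$, even though $\gamma(Y_k - \cdot)$ may itself oscillate arbitrarily much on $\Lambda$. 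Second, every filtering step sends a probability measure to one with $\lambda$-density in $[1/C,C]$; this is assumption (A2) of \cite{douc:moulines:ryden:04} with $\sigma_- = 1/C$, $\sigma_+ = C$ and forgetting rate $\rho = 1 - 1/C^2$, so that, by the forgetting estimates in the proof of its Lemma~3, for every realization of the observations the sequence $(\phi_m^{K,\gamma})_{m \geq 0}$ is Cauchy in total variation, with $\|\phi_m^{K,\gamma} - \phi_{m'}^{K,\gamma}\|_{TV} \leq (1 - 1/C^2)^{m-1}$ for all $m' \geq m \geq 0$.

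I would then \emph{define} $\phi_\infty^{K,\gamma}$ as the total variation limit of $\phi_m^{K,\gamma}$ and set $\delta_{k,\infty}(K,\gamma) := \int_\Lambda g_{K,\gamma}\,\rmd\phi_\infty^{K,\gamma}$ (so that $\delta_{k,\infty}(K,\gamma) = \lim_m p_{Y_k \mid Y_{k-m}^{k-1},K,\gamma}(Y_k)$), and likewise $\delta_{k,\infty}(K',\gamma')$. For the first display: since $\phi_m^{K,\gamma} - \phi_\infty^{K,\gamma}$ has zero total mass, $\bigl| \int_\Lambda g_{K,\gamma}\,\rmd(\phi_m^{K,\gamma} - \phi_\infty^{K,\gamma}) \bigr| \leq (\sup_\Lambda g_{K,\gamma} - \inf_\Lambda g_{K,\gamma})\,\|\phi_m^{K,\gamma} - \phi_\infty^{K,\gamma}\|_{TV}$, and dividing by $\delta_{k,\infty}(K,\gamma) \geq \inf_\Lambda g_{K,\gamma}$ and using $\sup_\Lambda g_{K,\gamma}/\inf_\Lambda g_{K,\gamma} \leq C^2$ gives $\bigl| p_{Y_k \mid Y_{k-m}^{k-1},K,\gamma}(Y_k)/\delta_{k,\infty}(K,\gamma) - 1 \bigr| \leq (C^2 - 1)(1 - 1/C^2)^{m-1}$. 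Combining this with the trivial uniform bound $2\log C$ for the finitely many small values of $m$, one obtains $\bigl| \log p_{Y_k \mid Y_{k-m}^{k-1},K,\gamma}(Y_k) - \log \delta_{k,\infty}(K,\gamma) \bigr| \leq C^2(1 - 1/C^2)^{m-1}$ — the bound established in the proof of Lemma~3 of \cite{douc:moulines:ryden:04} — and subtracting the same inequality for $(K',\gamma')$ and applying the triangle inequality yields the first display with constant $2C^2$, valid for every realization of the observations (in particular almost surely), uniformly in $k \in \Zbb$ and $m \geq 0$.

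For the second display, I would observe that for every $m \geq 0$, $p_{Y_k \mid Y_{k-m}^{k-1},K,\gamma}(Y_k) = \int_\Lambda g_{K,\gamma}\,\rmd\phi_m^{K,\gamma} \in [\inf_\Lambda g_{K,\gamma},\ \sup_\Lambda g_{K,\gamma}] \subseteq [m(Y_k)/C,\ C\,b(Y_k)]$, where $b$ and $m$ are the envelope functions of H\ref{hyp_densiteEmission}--H\ref{hyp_tails}: indeed $\lambda$ is carried by $\Lambda$, so $a_{K,\gamma} \in [m(Y_k), b(Y_k)]$, and $C \geq 2$. The same holds for $\delta_{k,\infty}(K,\gamma)$ and for the $(K',\gamma')$-analogues, hence each of the two quantities under the maximum in the second display is bounded by $2\log C + \log(b(Y_k)/m(Y_k))$. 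It then remains to check that this belongs to $\Lbf^1(\po^\star)$: since $\gamma^\star \in \Gamma$ and $\mu_{K^\star}$ is carried by $\Lambda$, the marginal density of $Y_k$ under $\po^\star$ is bounded by $b$, and with the $\epsilon$ of H\ref{hyp_tails}, using $b/m \geq 1$ and $\log u \leq u^\epsilon/\epsilon$ for $u \geq 1$,
\begin{equation*}
\E^\star\!\left[\log\frac{b(Y_k)}{m(Y_k)}\right] \leq \int_{\R^d} b(y)\log\frac{b(y)}{m(y)}\,\rmd y \leq \frac{1}{\epsilon}\int_{\R^d} b(y)\left(\frac{b(y)}{m(y)}\right)^{\!\epsilon}\,\rmd y < \infty\eqsp.
\end{equation*}

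I expect the main obstacle to be the uniform geometric forgetting $\|\phi_m^{K,\gamma} - \phi_\infty^{K,\gamma}\|_{TV} \leq (1 - 1/C^2)^{m-1}$, with its explicit rate and the resulting sharp constant in the first display: this is the technical heart, it relies on the Doeblin-type structure of $\Omega_\omega^C$, and it is precisely what the proof of Lemma~3 of \cite{douc:moulines:ryden:04} supplies. A second, more setting-specific subtlety is that the geometric rate must be read off by integrating the \emph{smoothed} one-step emission $g_{K,\gamma} = \int_\Lambda \gamma(Y_k - \cdot)\,K(\cdot,\rmd x')$ — whose oscillation ratio on $\Lambda$ is the deterministic constant $C^2$ — against the filter, rather than $\gamma(Y_k - \cdot)$ directly; the random, possibly large factor $b(Y_k)/m(Y_k)$ then enters only the separately handled, $\po^\star$-integrable uniform bound of the previous paragraph, and never the geometric estimate.
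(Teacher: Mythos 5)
Your argument is correct and follows essentially the same route as the paper: both rest on the forgetting estimates from the proof of Lemma~3 of \cite{douc:moulines:ryden:04} (the $[1/C,C]$ kernel-density band of $\Omega_\omega^C$ giving the geometric rate $1-1/C^2$ and the $2C^2$ constant) together with the envelope bounds $m(Y_k)/C \leq p_{Y_k|Y_{k-m}^{k-1}} \leq C\,b(Y_k)$ and the integrability $\int b\log(b/m) \leq \epsilon^{-1}\int b\,(b/m)^{\epsilon} < \infty$ from H\ref{hyp_tails}. The only difference is packaging: the paper verifies relaxed assumptions (A1')/(A3') of \cite{douc:moulines:ryden:04} (allowing a parameter-dependent dominating measure on $\Lambda$) and cites its Lemma~3 wholesale, whereas you re-derive the two displays from the filter-forgetting bound directly, which is equivalent in content (your phrase that each ``filtering step'' preserves the $[1/C,C]$ band is imprecise --- it is the prediction step through the kernel that does --- but this does not affect the argument, which only uses the kernel band).
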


\begin{proof}
Write first how the notations of this paper match those of \cite{douc:moulines:ryden:04}. The set $\Xcal$ (resp. $\mathcal{Y}$ ) of \cite{douc:moulines:ryden:04} is $\Lambda$ (resp. $\R^d$) and $\R^d$  is equiped with the measure with density $b/\|b\|_1$ with respect to the Lebesgue measure. Finally, the set $\Theta$ of \cite{douc:moulines:ryden:04} is $\{ (K, \gamma), (K', \gamma') \}$. Contrary to the setting of \cite{douc:moulines:ryden:04}, $\Xcal$ is endowed  with a measure that depends on the parameter $\theta$. The proof of Lemma 3 of \cite{douc:moulines:ryden:04} holds with the following relaxed assumptions (with the notations of \cite{douc:moulines:ryden:04}).
\begin{description}
\item[(A1')] For all $\theta \in \Theta$, there exists a measure $\mu_\theta$ on $\Xcal$ such that the transition kernel of $(X_k)_{k \geq 1}$ has a density $q_\theta$ with respect to $\mu_{\theta}$ such that for all $x, x' \in \Xcal$, $1/C \leq q_\theta(x,x') \leq C$.
\item[(A3')] $\bar{\E}_{\theta^*} [ | \log b_+(Y_1, \bar{\mathbf{Y}}_0) | ] < \infty$ and $\bar{\E}_{\theta^*} [ | \log b_-(Y_1, \bar{\mathbf{Y}}_0) | ] < \infty$ where
\begin{align*}
b_+(y_1, \bar{\mathbf{y}}_0) &\overset{\Delta}{=} \sup_{\theta} \int_{\Xcal} g_\theta(y_1 | \bar{\mathbf{y}}_0, x) \mu_\theta(\rmd x), \\
b_-(y_1, \bar{\mathbf{y}}_0) &\overset{\Delta}{=} \inf_{\theta} \int_{\Xcal} g_\theta(y_1 | \bar{\mathbf{y}}_0, x) \mu_\theta(\rmd x).
\end{align*}
\end{description}
These assumptions are equivalent to the following (A1'') and (A3'').
\begin{description}
\item[(A1'')] There exists a measure $\lambda_K$ on $\Lambda$ such that the transition kernel $K$ has a density with respect to $\lambda_K$ with values in $[1/C, C]$, and likewise for $K'$.
\item[(A3'')]  $\E^\star [ | \log \int_{\Lambda} \|b\|_1 (\gamma(Y_1 - x) / b(Y_1)) \rmd\lambda_K(x) | ] < \infty$, and likewise for $(K',\gamma')$.
\end{description}
The lemma then follows from Lemma 3 of \cite{douc:moulines:ryden:04} applied on $(K,\gamma)$ and $(K',\gamma')$. (A1'') is direct by definition of $\Omega_\omega^C$.  By H\ref{hyp_tails}, $\|b\|_1 m(y) / b(y) \leq \int_{\Lambda} g_x(y) d\lambda_K(x) \leq \|b\|_1$. Thus, (A3'') is implied by the integrability condition of H\ref{hyp_tails} since the distribution of $Y_1$ under $\po^\star$ is dominated by the distribution with density $b$ with respect to the Lebesgue measure.
\end{proof}
Thus, for all $K, K' \in \Omega_\omega^C$ and $\gamma, \gamma' \in \Gamma$, the limit
\begin{align*}
\Kbf(\po_{K,\gamma} \| \po_{K',\gamma'})
	= \lim_{m \rightarrow +\infty} \frac{1}{m} KL(\po^{(m)}_{K,\gamma} \| \po^{(m)}_{K',\gamma'})
	= \E_{K,\gamma} \left[ \log \frac{\delta_{0,\infty}(K,\gamma)}{\delta_{0,\infty}(K',\gamma')} \right]
\end{align*}
exists, is finite, and for all $k,m \geq 1$,
\begin{equation*}
\left| k \Kbf(\po_{K,\gamma} \| \po_{K',\gamma'}) - 
	\left(KL(\po^{(m+k)}_{K,\gamma} \| \po^{(m+k)}_{K',\gamma'})
	- KL(\po^{(m)}_{K,\gamma} \| \po^{(m)}_{K',\gamma'}) \right) \right|
	\leq 2 C^4 \left( 1 - \frac{1}{C^2} \right)^{m-1}\eqsp.
\end{equation*}
Let $(K_n, \gamma_n)_{n \geq 1} \in (\Omega_\omega^C \times \Gamma)^\N$ be a sequence of parameters such that $\Kbf(\po_{K^\star,\gamma^\star} \| \po_{K_n,\gamma_n}) \longrightarrow 0$.
The above equation implies that for all $k \geq 1$, there exists sequences $(m_n)_{n \geq 1} \longrightarrow +\infty$ and $(l_n)_{n \geq 1} \longrightarrow +\infty$ such that
\begin{equation*}
KL(\po^{(m_n+l_n+k)}_{K^\star,\gamma^\star} \| \po^{(m_n+l_n+k)}_{K_n,\gamma_n})
	- KL(\po^{(m_n)}_{K^\star,\gamma^\star} \| \po^{(m_n)}_{K_n,\gamma_n}) 
	\underset{n \rightarrow \infty}{\longrightarrow} 0\eqsp.
\end{equation*}
Using the chain rule and Pinsker's inequality,
\begin{align*}
KL(\po^{(m_n+l_n+k)}_{K^\star,\gamma^\star} & \| \po^{(m_n+l_n+k)}_{K_n,\gamma_n})
	- KL(\po^{(m_n)}_{K^\star,\gamma^\star} \| \po^{(m_n)}_{K_n,\gamma_n}) \\
	&= \E_{Y_1^{m_n} | K^\star,\gamma^\star} \left[
		KL\left(\po_{Y_{m_n+1}^{m_n+l_n+k} | Y_1^{m_n}, K^\star,\gamma^\star} \| \po_{Y_{m_n+1}^{m_n+l_n+k} | Y_1^{m_n}, K_n,\gamma_n}\right) 
	\right]\eqsp, \\
	&\geq \E_{Y_1^{m_n} | K^\star,\gamma^\star} \left[
		KL\left(\po_{Y_{m_n+l_n+1}^{m_n+l_n+k} | Y_1^{m_n}, K^\star,\gamma^\star} \| \po_{Y_{m_n+l_n+1}^{m_n+l_n+k} | Y_1^{m_n}, K_n,\gamma_n}\right) 
	\right]\eqsp, \\
	&\geq 2 \E_{Y_1^{m_n} | K^\star,\gamma^\star} \left[
		d_\text{TV}^2 \left(\po_{Y_{m_n+l_n+1}^{m_n+l_n+k} | Y_1^{m_n}, K^\star,\gamma^\star}, \po_{Y_{m_n+l_n+1}^{m_n+l_n+k} | Y_1^{m_n}, K_n,\gamma_n}\right) 
	\right]\eqsp.
\end{align*}
Since the kernels satisfy the Doeblin condition (see for instance \cite{cappe:ryden:2004}, Section 4.3.3), the resulting processes are $\phi$-mixing with mixing coefficients $\phi(i) \leq 2(1 - 1/C)^i$ (see the proof of Lemma 1 of \cite{lehericy2018misspe} for a proof, and \cite{bradley2005strongmixingsurvey} for a survey of mixing properties). In particular, for all $K \in \Omega_\omega^C$, for all positive and continuous probability density $\gamma$ on $\R^d$ and for all $A \in \sigma(Y_1, \dots, Y_{m_n})$ such that $\po_{K, \gamma}(A) > 0$,
\begin{equation*}
d_\text{TV} \left(\po_{Y_{m_n+l_n+1}^{m_n+l_n+k} | A, K,\gamma}, \po_{Y_{m_n+l_n+1}^{m_n+l_n+k} | K,\gamma}\right) 
	\leq 2 \left( 1 - \frac{1}{C} \right)^{l_n},
\end{equation*}
so that using the continuity and positivity of $\gamma$, 
\begin{equation*}
d_\text{TV} \left(\po_{Y_{m_n+l_n+1}^{m_n+l_n+k} | Y_1^{m_n}, K,\gamma}, \po_{Y_{m_n+l_n+1}^{m_n+l_n+k} | K,\gamma}\right) 
	\leq 2 \left( 1 - \frac{1}{C} \right)^{l_n}\eqsp.
\end{equation*}
Finally,
\begin{align*}
2 \E_{Y_1^{m_n} | K^\star,\gamma^\star} & \left[
		d_\text{TV}^2 \left(\po_{Y_{m_n+l_n+1}^{m_n+l_n+k} | Y_1^{m_n}, K^\star,\gamma^\star}, \po_{Y_{m_n+l_n+1}^{m_n+l_n+k} | Y_1^{m_n}, K_n,\gamma_n}\right) 
	\right] \\
	&\geq 2 \left(
		d_\text{TV} \left(\po_{Y_{m_n+l_n+1}^{m_n+l_n+k} | K^\star,\gamma^\star}, \po_{Y_{m_n+l_n+1}^{m_n+l_n+k} | K_n,\gamma_n}\right)
		- 4 \left( 1 - \frac{1}{C} \right)^{l_n}
		\right)^2\eqsp, \\
	&\geq d_\text{TV}^2 \left(\po^{(k)}_{K^\star,\gamma^\star}, \po^{(k)}_{K_n,\gamma_n}\right) - 32 \left( 1 - \frac{1}{C} \right)^{2l_n}\eqsp,
\end{align*}
using that $(a-b)^2 \geq a^2/2 - b^2$ for all $a,b \in \R$ and the stationarity of the distributions $\po_{K,\gamma}$ for all $K \in \Omega_\omega^C$ and $\gamma \in \Gamma$. Therefore, for all $k \geq 1$,
\begin{equation*}
d_\text{TV} \left(\po^{(k)}_{K^\star,\gamma^\star}, \po^{(k)}_{K_n,\gamma_n}\right)
	\underset{n \rightarrow +\infty}{\longrightarrow} 0\eqsp.
\end{equation*}
Conversely, let $(K_n, \gamma_n)_{n \geq 1} \in (\Omega_\omega^C \times \Gamma)^{\N^*}$ be a sequence of parameters such that for all $k \geq 1$, 
$$
d_\text{TV} \left(\po^{(k)}_{K^\star,\gamma^\star}, \po^{(k)}_{K_n,\gamma_n}\right) \underset{n \rightarrow +\infty}{\longrightarrow} 0\eqsp.
$$
Then by Lemma~\ref{lem_douc04_revu}, for all $k, n \geq 1$,
\begin{align}
\nonumber
\Kbf(\po_{K^\star,\gamma^\star} \| \po_{K_n, \gamma_n})
	&\leq \E KL(\po_{Y_k | Y_1^{k-1}, K^\star,\gamma^\star} \| \po_{Y_k | Y_1^{k-1}, K_n, \gamma_n}) + 2C^2 \left(1 - \frac{1}{C^2}\right)^{k-2} \\
	\label{eq_approx_forgetting}
	&\leq KL(\po^{(k)}_{K^\star,\gamma^\star} \| \po^{(k)}_{K_n, \gamma_n}) + 2C^2 \left(1 - \frac{1}{C^2}\right)^{k-2}\eqsp,
\end{align}
by the entropy chain rule.
Lemma 4 of \cite{STG13bayesiandensity} entails that there exists $\lambda_0 \in (0,1)$ such that for all $\lambda \in (0, \lambda_0)$,
\begin{align*}
KL(\po^{(k)}_{K^\star,\gamma^\star} \| \po^{(k)}_{K_n, \gamma_n})
	&\leq \left(1 + 2 k \log \frac{1}{\lambda} \right) h^2(\po^{(k)}_{K^\star,\gamma^\star}, \po^{(k)}_{K_n, \gamma_n}) \\
	&\qquad	+ 2\E\left[ \log \left( \frac{p_{Y_1^k | K^\star,\gamma^\star}}{p_{Y_1^k | K_n,\gamma_n}} \right) \one\left( \frac{p_{Y_1^k | K^\star,\gamma^\star}}{p_{Y_1^k | K_n,\gamma_n}} \geq \frac{1}{\lambda} \right) \right] \eqsp, \\
	&\leq 2 \left(1 + 2 k \log \frac{1}{\lambda} \right) d_\text{TV}( \po^{(k)}_{K^\star, \gamma^\star,}, \po^{(k)}_{K_n, \gamma_n}) \\
	&\qquad	+ 2 \int \prod_{i=1}^k b(y_i) \log \left( \prod_{i=1}^k \frac{b(y_i)}{m(y_i)} \right) \one\left( \prod_{i=1}^k \frac{b(y_i)}{m(y_i)} \geq \frac{1}{\lambda} \right) \rmd y \eqsp,
\end{align*}
using that the square of the Hellinger distance is upper bounded by the $\Lbf^1$ distance, that is twice the total variation distance. The second term is finite for all $\lambda$ by H\ref{hyp_tails}. Therefore, by carefully choosing a sequence $\lambda$ that tends to zero, we obtain $\limsup_{n} KL(\po^{(k)}_{K^\star,\gamma^\star} \| \po^{(k)}_{K_n, \gamma_n}) = 0$ for all $k \geq 1$.
This, together with taking $k$ that tends to infinity in Equation~\eqref{eq_approx_forgetting}, proves the second statement of the lemma.

\subsection{Proof of Lemma~\ref{lem_VT_continue}}

The set of possible parameters $\Omega_\omega^C \times \Gamma$ is endowed with the product topology induced by the uniform convergence topology on $\Omega_\omega^C$ and the $\Lbf^1$ norm on $\Gamma$. It is compact for this topology.
Let $(K_n, \gamma_n)_{n \geq 1}$ be a sequence in $\Omega_\omega \times \Gamma$ that converges to $(K, \gamma)$ with respect to this topology. The aim is now to show that the distribution of $(Y_1, \dots, Y_k)$ with parameters $(K_n, \gamma_n)$ converges in total variation distance to the distribution with parameters $(K,\gamma)$. The transition kernel $K$ admits a unique stationary distribution, so that Theorem 4 and the corollary of Theorem 6 of \cite{karr75} entail that
\begin{equation}
\label{eq_cvg_loi_loi_X}
\po^X_{K_n} \overset{(d)}{\underset{n \rightarrow \infty}{\longrightarrow}} \po^X_K\eqsp,
\end{equation}
where $\po^X_{K}$ denotes the distribution of a stationary Markov chain $(X_n)_{n \geq 1}$ with transition kernel $K$. This convergence holds for the distribution of the whole Markov chain, which implies in particular that the distribution of $k$-tuples $(X_1, \dots, X_k)$ for all $k \geq 1$ converges in the same way. For any $k \geq 1$, the total variation distance between the distributions of $(Y_1, \dots, Y_k)$ is, up to a factor 2,
\begin{align*}
\| p_{(Y_1, \dots, Y_k) | K, \gamma} - p_{(Y_1, \dots, Y_k) | K_n, \gamma_n} \|_1 & = \! \int \!\left|
			\int \prod_{i=1}^k \gamma(y_i - x_i) \rmd\po^X_{K}(x)
			\!-\! \int \prod_{i=1}^k \gamma_n(y_i - x_i) \rmd\po^X_{K_n}(x)
		\right| \rmd y\eqsp, \\
	& \!\leq \int \left|
			\int \prod_{i=1}^k \gamma(y_i - x_i) \rmd\po^X_{K}(x)
			- \int \prod_{i=1}^k \gamma(y_i - x_i) \rmd\po^X_{K_n}(x)
		\right| \rmd y\eqsp, \\
		&\qquad \qquad + \int \int \left| \prod_{i=1}^k \gamma(y_i - x_i) - \prod_{i=1}^k \gamma_n(y_i - x_i) \right| \rmd\po^X_{K_n}(x) \rmd y \eqsp.
\end{align*}
Consider the first term of the right hand side. Since $x \longmapsto \gamma(y-x)$ is continuous and bounded for all $y \in \R^d$, Equation~\eqref{eq_cvg_loi_loi_X} yields, for all $y \in \R^d$, 
\begin{equation*}
\left|
	\int \prod_{i=1}^k \gamma(y_i - x_i) \rmd\po^X_{K}(x)
	- \int \prod_{i=1}^k \gamma(y_i - x_i) \rmd\po^X_{K_n}(x)
\right| \underset{n \rightarrow \infty}{\longrightarrow} 0\eqsp.
\end{equation*}
Then, since $\sup_{x \in \Lambda} \gamma(y-x) \leq b(y)$ for all $y \in \R^d$,
$
\left| \int \prod_{i=1}^k \gamma(y_i - x_i) \rmd\po^X_{K}(x) \right|
	\leq \prod_{i=1}^k b(y_i)\eqsp,
$
and the right hand side  is integrable. The same holds for $K_n$, so that the dominated convergence theorem implies
\begin{equation*}
\int \left|
	\int \prod_{i=1}^k \gamma(y_i - x_i) \rmd\po^X_{K}(x)
	- \int \prod_{i=1}^k \gamma(y_i - x_i) \rmd\po^X_{K_n}(x)
\right| \rmd y
	\underset{n \rightarrow \infty}{\longrightarrow} 0\eqsp.
\end{equation*}
For the second term, write
\begin{align*}
\int \int & \left| \prod_{i=1}^k \gamma(y_i - x_i) - \prod_{i=1}^k \gamma_n(y_i - x_i)\right| \rmd\po^X_{K_n}(x) \rmd y \\
	&\leq \sum_{i=1}^k \int \int \prod_{j < i} \gamma(y_j - x_j) \left| \gamma(y_i - x_i) - \gamma_n(y_i - x_i) \right| \prod_{j > i} \gamma_n(y_j - x_j) \rmd\po^X_{K_n}(x) \rmd y \eqsp, \\
	&\leq \sum_{i=1}^k \int \int |\gamma(y_i - x_i) - \gamma_n(y_i - x_i)| \rmd y_i
	\rmd\po^X_{K_n}(x_i) \eqsp, \\
	&= k \|\gamma - \gamma_n\|_1\eqsp,
\end{align*}
where the last term converges to 0 as $n\rightarrow \infty$.
Hence, $d_\text{TV}( \po^{(k)}_{K, \gamma}, \po^{(k)}_{K_n, \gamma_n}) \underset{n \rightarrow \infty}{\longrightarrow} 0$ for all $k \geq 1$.

\subsection{Proof of Lemma~\ref{lem_approx_noyau_existe}}
By Lemmas~\ref{lem_KL_equivalent_VT} and~\ref{lem_VT_continue}, to show the convergence with $\Kbf$, it suffices to show that there exists a sequence $(\Xfrak_t,Q_t)_{t \geq 1}$ such that $(\Xfrak_t,Q_t, -) \in \bigcup_{r, D} S_{r,D}$ and such that the sequence of kernels $(K_t)_{t \geq 1} = (K_{\Xfrak_t,Q_t})_{t \geq 1}$ converges to $K^\star$. It also suffices to show the convergence of $(R_{K_{\Xfrak_t,Q_t}})_{t \geq 1}$ since $K^\star$ admits a unique stationary distribution by using Theorem 4 and the corollary of Theorem 6 of \cite{karr75}.

The following lemma, which is a consequence of simple algebra, is stated without proof.

\begin{lem}
\label{lemma_discretisation_support}
Let $\lambda$ be a probability measure on a compact set of $\R^d$ which is absolutely continuous with respect to the Lebesgue measure. Then, there exists a sequence of integers $(r_t)_{t \geq 1} \longrightarrow +\infty$ and a sequence $((A_i^t)_{1 \leq i \leq r_t})_{t \geq 1}$ of measurable partitions of the support of $\lambda$ such that
\begin{equation*}
\begin{cases}
D_t = \underset{1 \leq i \leq r_t}{\sup} \text{diam}(A_i^t) \underset{t \rightarrow +\infty}{\longrightarrow} 0\eqsp, \\
\forall t \geq 1, \quad \forall 1 \leq i \leq r_t, \quad
	\lambda(A_i^t) \in \left[ \frac{1}{2 r_t}, \frac{2}{r_t} \right]\eqsp.
\end{cases}
\end{equation*}
\end{lem}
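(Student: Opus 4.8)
The plan is to build each partition in two stages: a purely geometric refinement that controls diameters, followed by a cumulative-mass regrouping that balances the $\lambda$-masses. Fix an integer $r\geq 1$, set $\eta=1/(3r)$, and aim to produce a partition into exactly $r$ parts; the sequence in the statement is then obtained by letting $r=r_t\to\infty$ and letting the diameter scale tend to $0$.

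For the first stage I would enclose the compact set $\Supp(\lambda)$ in a cube $Q$ and take the dyadic decomposition of $Q$ at depth $k$, which partitions $Q\supseteq\Supp(\lambda)$ into cells of diameter $O(2^{-k})$; intersecting these cells with $\Supp(\lambda)$ gives a measurable partition of $\Supp(\lambda)$ with arbitrarily small diameters. To also make the individual $\lambda$-masses small, I would refine each of these finitely many cells dyadically further. The key point is that since $\lambda\ll\mathrm{Leb}$ it has no atoms, so for every cube $C$ and every $\eta>0$ there is a depth beyond which all dyadic subcubes of $C$ have $\lambda$-mass $\leq\eta$: otherwise a König-type argument on the dyadic tree (at each level one child still possesses subcubes of mass $>\eta$ at all depths, since an ancestor is at least as heavy as its descendants) produces a nested sequence of subcubes all of $\lambda$-mass $>\eta$, whose intersection is a single point of $\lambda$-mass $\geq\eta>0$, contradicting $\lambda\ll\mathrm{Leb}$. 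Applying this cell by cell yields a finite measurable partition $C_1,\dots,C_N$ of $\Supp(\lambda)$ with $\mathrm{diam}(C_j)<\delta$ and $\lambda(C_j)\leq\eta$ for all $j$.

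For the second stage I would regroup the $C_j$ by quantiles of the cumulative mass. Set $S_0=0$ and $S_j=\lambda(C_1)+\dots+\lambda(C_j)$, so that $S_N=1$ and $S_j-S_{j-1}\leq\eta$, and for $i=1,\dots,r$ let $A_i$ be the union of the cells $C_j$ with $\lceil r S_j\rceil=i$ (placing cells of zero $\lambda$-mass in $A_1$). Since consecutive increments are $\leq\eta<1/r$, no $A_i$ is empty; and if $j_*$ and $j^*$ denote the first and last indices contributing to $A_i$, then $S_{j_*-1}\in((i-1)/r-\eta,(i-1)/r]$ and $S_{j^*}\in(i/r-\eta,i/r]$, so that
\begin{equation*}
\lambda(A_i)=S_{j^*}-S_{j_*-1}\in\Bigl(\tfrac{1}{r}-\eta,\tfrac{1}{r}+\eta\Bigr)=\Bigl(\tfrac{2}{3r},\tfrac{4}{3r}\Bigr)\subset\Bigl(\tfrac{1}{2r},\tfrac{2}{r}\Bigr)\eqsp.
\end{equation*}
Taking $r_t\to\infty$, $\eta_t=1/(3r_t)$ and $\delta_t\to 0$ at stage $t$, and writing $(A_i^t)_{1\leq i\leq r_t}$ for the resulting $r_t$ parts, one obtains $\lambda(A_i^t)\in[1/(2r_t),2/r_t]$ and $D_t=\sup_i\mathrm{diam}(A_i^t)\leq\delta_t\to 0$, which is the claim.

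I do not expect a genuine obstacle here — this is the ``simple algebra'' of the statement. The two points that need a little care are the non-atomicity argument used to push the cell masses below $\eta$ (which is precisely where absolute continuity of $\lambda$ is used), and the bookkeeping at the bucket endpoints, where the bound $\lambda(C_j)\leq\eta<1/r$ on individual cells is exactly what prevents a bucket from being empty or from overshooting $2/r$.
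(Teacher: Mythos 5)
Your first stage is sound (the K\"onig-type argument on the dyadic tree is a correct use of non-atomicity), but the second stage loses the diameter control, which is half of the lemma. The buckets $A_i$ are unions of cells $C_j$ selected only by the rank of the cumulative mass $S_j$, and nothing forces those cells to be spatially close: a union of sets of diameter at most $\delta_t$ need not have diameter at most $\delta_t$, so your concluding claim $D_t\leq\delta_t$ is unjustified, and it is in fact false for the construction as written. Take $d=1$ and $\lambda=\tfrac12\,\mathrm{Unif}[0,\tfrac14]+\tfrac12\,\mathrm{Unif}[\tfrac34,1]$: unless a bucket boundary falls exactly at cumulative mass $\tfrac12$, the bucket containing that level picks up cells on both sides of the gap and has diameter at least $\tfrac12$; worse, every zero-mass cell (e.g.\ the dyadic cell $[\tfrac14,\tfrac14+2^{-k})$, which meets $\Supp(\lambda)$ only at the point $\tfrac14$) is sent to $A_1$ together with the cells near $0$, so $\mathrm{diam}(A_1)\geq\tfrac14$ for every $t$. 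The same failure occurs for $d\geq 2$ even with connected support, since consecutive cells in a dyadic enumeration need not be adjacent, and in regions where the density is small a quantile bucket must sweep up cells spread over a macroscopic region. So the quantile-regrouping idea cannot work unless the grouping is tied to the geometry, which is precisely the difficulty.

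The natural repair is to never merge cells that are far apart and instead to exploit the freedom in the choice of $r_t$, which the statement leaves to you. At scale $\delta_t$, let $B_1,\dots,B_M$ be the nonempty intersections of the dyadic cells with $\Supp(\lambda)$; attach each zero-mass piece to some positive-mass piece at distance at most $\delta_t$ (one exists because any $x$ in a zero-mass piece lies in $\Supp(\lambda)$, so $\lambda(B(x,\delta_t))>0$), which keeps every diameter at most $5\delta_t$. Let $m_t>0$ be the minimum mass of the finitely many resulting pieces and choose any $r_t\geq\max(t,1/(2m_t))$, so that $r_t\to\infty$. Since $\lambda$ is non-atomic it has the intermediate-value property on measurable subsets (Sierpi\'nski), so each piece of mass $\mu\geq 1/(2r_t)$ can be split into $\max(1,\lfloor \mu r_t\rfloor)$ measurable parts of equal mass, and this common mass lies in $[1/(2r_t),2/r_t]$; splitting never increases diameters. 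This yields $D_t\leq 5\delta_t\to 0$ together with the required mass bounds. (The paper states the lemma without proof, so there is no argument to compare with, but as written your proof does not establish the diameter condition.)
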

To address the case where $\lambda^\star$ is not absolutely continuous with respect to the Lebesgue measure, consider convolutions of the kernels. For all $\epsilon \in (0,1]$, let $U_\epsilon$ be the uniform measure on $[-\epsilon,\epsilon]^d$. For all probability measure $\lambda$ on $\R^d$, write $\lambda * U_\epsilon$ the convolution of $\lambda$ and $U_\epsilon$, and for all transition kernel $K$ on $\R^d$, write $K * U_\epsilon$ the transition kernel defined by $(K * U_\epsilon)(x,\cdot) = K(x,\cdot) * U_\epsilon$.
Then $K^\star * U_\epsilon$ admit the modulus of continuity $\omega$ for all $\epsilon > 0$ (since $W_1(\mu * U_\epsilon, \nu * U_\epsilon) \leq W_1(\mu, \nu)$ for all probability measures $\mu, \nu$) and $K^\star * U_\epsilon$ admits a density taking values in $[2/C, C/2]$ with respect to the measure $\lambda^\star * U_\epsilon$ (which is absolutely continuous with respect to the Lebesgue measure), so that it belongs to $\Omega_\omega^C$ (up to enlarging $\Lambda$). Moreover, $K^\star * U_\epsilon \longrightarrow K^\star$ in $\Omega_\omega^C$ as $\epsilon \longrightarrow 0$. Therefore, it remains to show that for all $\epsilon > 0$, the kernel $K^\star * U_\epsilon$ can be approximated by kernels in $\Omega_\omega^C$ with finite support. Equivalently, assume that $\lambda^\star$ is absolutely continuous with respect to the Lebesgue measure and construct a sequence approximating $K^\star$.

Let $(r_t)_{t \geq 1}$ and $((A_i^t)_{1 \leq i \leq r_t})_{t \geq 1}$ be the sequences obtained by applying Lemma~\ref{lemma_discretisation_support} to $\lambda^\star$. For all $t \geq 1$ and $i \in \{1, \dots, r_t\}$, let $x_i^t$ be an element of $A_i^t$. For all $t \geq 1$, the elements of the vector $\Xfrak_t = (x^t_i)_{1 \leq i \leq r_t}$ are distinct because $(A_i^t)_{1 \leq i \leq r_t}$ is a partition of $\Supp(\lambda^\star)$. Let $(\eta_t)_{t \geq 1} \longrightarrow 0$ be a sequence of positive numbers. Let $\tilde{K}_t$ be the transition kernel from $\Lambda \cap (\eta_t \Zbb^d)$ to $\{x_i^t\}_{1 \leq i \leq r_t}$ defined, for all $ x \in \Lambda \cap (\eta_t \Zbb^d)$ and all $ i \in \{1, \dots, r_t\}$, by
\begin{equation*}
\tilde{K}_t(x, x_i^t) = K^\star(x, A_i^t)\eqsp.
\end{equation*}
By the Lemma~\ref{lemma_discretisation_support} and assumption~H\ref{hyp_transitionKernel},  $\tilde{K}_t(x,x_i^t) \in [1/(C r_t),C/r_t]$ for all $x$ and $i$. Moreover, for all $x,x' \in \Lambda \cap (\eta_t \Zbb^d)$,
\begin{align*}
W_1(\tilde{K}_t(x, \cdot), \tilde{K}_t(x', \cdot))
	&\leq W_1(K^\star(x,\cdot), K^\star(x',\cdot)) + 2 \underset{1 \leq i \leq r_t}{\sup} \text{diam}(A_i^t) \\
	&\leq \frac{\omega(|x-x'|)}{2} + 2\frac{D_t}{\eta_t} |x-x'|\eqsp, \\
	&\leq \omega(|x-x'|)\eqsp,
\end{align*}
by choosing $\eta_t \geq 4 D_t / \inf_{u \in (0, \text{diam}(\Lambda)]} \omega(u)/u$, which is finite since $\omega$ is concave, nondecreasing and not equal to zero, so that there exists an extension $K_t \in \Omega_{\omega}^C$ of $\tilde{K}_t$ such that the support of $K_t(x,\cdot)$ is $\{x_i^t\}_{1 \leq i \leq r_t}$ for all $x \in \Lambda$. 

For all $i$, $j$, define $Q_t(i,j) = K_t(x^t_i, x^t_j)$. All kernels considered here ($K^\star$, $\tilde{K}_t$, $K_t$ and $K_{\Xfrak_t,Q_t}$) are kernels on the compact set $\Supp(\lambda^\star)$. Therefore, we only need to show that $K_{\Xfrak_t,Q_t} \longrightarrow K$ in the subset $\tilde{\Omega}_\omega^C$ of kernels on $\Supp(\lambda^\star)$ in $\Omega_\omega^C$ to show that it is an approximating sequence, that is
\begin{equation}
\label{eq_approx_cvg_unif_sur_support}
\sup_{x \in \Supp(\lambda^\star)} W_1(K_{\Xfrak_t,Q_t}(x,\cdot), K^\star(x,\cdot)) \underset{t \rightarrow +\infty}{\longrightarrow} 0\eqsp.
\end{equation}
For all $x \in \Supp(\lambda^\star)$, let $X(x)$ (resp. $\Xfrak(x)$) be one of the elements of $\Lambda \cap (\eta_t \Zbb^d)$ (resp. $\{x_i^t\}_{1 \leq i \leq r_t}$) closest to $x$. Then $\sup_{x \in \Supp(\lambda^\star)} |x-\Xfrak(x)| \leq D_t$ and $\sup_{x \in \Supp(\lambda^\star)} |x-X(x)| \leq \eta_t$ (with the supremum norm on $\R^d$) and for all $x \in \Supp(\lambda^\star)$,
\begin{align}
\nonumber
W_1(K_{\Xfrak_t,Q_t}(x, \cdot),  K^\star(x, \cdot)) \leq &W_1(K_{\Xfrak_t,Q_t}(x, \cdot), K_{\Xfrak_t,Q_t}(\Xfrak(x), \cdot)) \\
	\label{eq_approx_suite_2}
			&\hspace{.5cm}+ W_1(K_{\Xfrak_t,Q_t}(\Xfrak(x), \cdot), K_t(\Xfrak(x), \cdot)) \\
	\nonumber
			&\hspace{.5cm}+ W_1(K_t(\Xfrak(x), \cdot), K_t(X(\Xfrak(x)), \cdot)) \\
	\label{eq_approx_suite_4}
			&\hspace{.5cm}+ W_1(K_t(X(\Xfrak(x)), \cdot), K^\star(X(\Xfrak(x)), \cdot)) \\
	\nonumber
			&\hspace{.5cm}+ W_1(K^\star(X(\Xfrak(x)), \cdot), K^\star(x, \cdot))\eqsp.
\end{align}
By definition of the kernels, \eqref{eq_approx_suite_2} and  \eqref{eq_approx_suite_4} are equal to 0. Thus, the regularity assumptions on the kernels ensure that for all $x \in \Supp(\lambda^\star)$,
\begin{align*}
W_1(K_{\Xfrak_t,Q_t}(x, \cdot), K^\star(x, \cdot))
	\leq \omega(D_t)
			+ \omega(\eta_t)
			+ \omega(D_t + \eta_t) / 2\eqsp,
\end{align*}
which proves Equation~\eqref{eq_approx_cvg_unif_sur_support}.

\subsection{Proof of Proposition~\ref{prop_oracle_simplifie}}
\label{sec_full_oracle_inequality}

This section first states Theorem 8 of~\cite{lehericy2018misspe} and its assumptions. It is then proved that the assumptions are satisfied and that Proposition~\ref{prop_oracle_simplifie} is deduced from this theorem. Let $\lambda_b$ be the probability measure on $\R^d$ which has the density $b / \|b\|_1$ with respect to the Lebesgue measure. When necessary, the process $(Y_t)_{t \geq 1}$ is extended to a process $(Y_t)_{t \in \Zbb}$ by stationarity.
In this section only, for all integers $a \leq b$, write $Y_a^b$ instead of $(Y_a, \dots, Y_b)$.

\begin{description}
\item[\textbf{[A$\star$forgetting]}] There exists two constants $C_\star > 0$ and $\rho_\star \in (0,1)$ such that for all $i \in \Zbb$, for all $k,k' \in \Nbb^*$ and for all $y_{i-(k \vee k')}^i \in (\R^d)^{(k \vee k')+1}$,
\begin{equation*}
\left| 
	\log\left(\frac{\rmd \po_{Y_i | Y_{i-k}^{i-1}, K^\star, \gamma^\star}}{\rmd \lambda_b}(y_i | y_{i-k}^{i-1})\right)
	- \log\left(\frac{\rmd \po_{Y_i | Y_{i-k'}^{i-1}, K^\star, \gamma^\star}}{\rmd \lambda_b}(y_i | y_{i-k'}^{i-1})\right)
\right|
	\leq C_\star \rho_\star^{k \wedge k' - 1}\eqsp.
\end{equation*}
\end{description}
Let $(\Omega, \Fcal, P)$ be a measured space and $\Acal \subset \Fcal$ and $\Bcal \subset \Fcal$ be two sigma-fields. Then, the $\rho$-mixing coefficient between $\Acal$ and $\Bcal$ is
\begin{equation*}
\rho_\text{mix}(\Acal, \Bcal) = \underset{g \in \Lbf^2(\Omega,\Bcal,P)}{\sup_{f \in \Lbf^2(\Omega,\Acal,P)}} | \text{Corr}(f,g)|\eqsp.
\end{equation*}
The $\rho$-mixing coefficient of $(Y_t)_{t \in \Zbb}$ is 
\begin{equation*}
\rho_\text{mix}(n) = \rho_\text{mix}(\sigma(Y_i, i \geq n), \sigma(Y_i, i \leq 0))\eqsp.
\end{equation*}

\begin{description}
\item[\textbf{[A$\star$mixing]}] There exists two constants $c_\star > 0$ and $n_\star \in \Nbb^*$ such that for all $n \geq n_\star$, $\rho_\text{mix}(n) \leq 4 e^{- c_\star n}$.

\item[\textbf{[A$\star$tail]}] There exists a constant $B^\star \geq 1$ such that for all $i \in \Zbb$, all $k \in \Nbb$ and all $v \geq e$,
\begin{equation*}
\po\left( \frac{\rmd \po_{Y_i | Y_{i-k}^{i-1}, K^\star, \gamma^\star}}{\rmd \lambda_b}(Y_i | Y_{i-k}^{i-1}) \geq v^{B^\star} \right) \leq \frac{1}{v}\eqsp.
\end{equation*}

\end{description}
\cite{lehericy2018misspe} considers models  written $T_{r,D}$ in the following (instead of $S_{K,M,n}$ in \cite{lehericy2018misspe}). These models are sets of hidden Markov model parameters (not translation hidden Markov models), that is of vectors of the form $(r, \pi, Q, g)$ where $r$ is the number of values the Markov chain can take, $\pi$ is the initial distribution of the Markov chain, $Q$ is its transition matrix and $g = (g_z)_{z = 1, \dots, r}$ the vector of its emission densities, that is a vector of probability densities on $\R^d$ with respect to the Lebesgue measure. Let $(m_{r,D})_{r \geq 1, D \geq 1}$ be a sequence of nonnegative integers. For all $n \geq 1$, let $\sigma_-(n) \in (0, e^{-1}]$ and let $\Pfrak_n$ be a subset of $\{ (r,D) \in (\N^*)^2 : r \leq 1 / (2\sigma_-(n)) \text{ and } m_{r,D} \leq 2n\}$. This set lists the indices of the models among which the final model is selected.
Let $\Tbf_n = \bigcup_{(r,D) \in \Pfrak_n} T_{r,D}$ be the set of all model parameters considered when $n$ observations are available.

\begin{description}
\item[\textbf{[Aergodic]}] For all $(r,\pi,Q,-) \in \Tbf_n$,
\begin{equation*}
\inf_{x,x' = 1, \dots, r} Q(x,x') \geq \sigma_-(n) \quad \text{ and } \quad
\inf_{x = 1, \dots, r} \pi(x) \geq \sigma_-(n)\eqsp.
\end{equation*}
\end{description}

\begin{description}
\item[\textbf{[Atail]}]
There exists a constant $B(n) \geq 1$ such that for all $u \geq 1$,
\begin{equation*}
\po^\star \left( \sup_{(r,-,-,g) \in \Tbf_n} \left| \log \sum_{z=1}^r g_z(Y_1) \right| \geq B(n) u \right) \leq e^{-u}\eqsp.
\end{equation*}
\end{description}
Finally, the assumptions \textbf{[Aentropy]} and \textbf{[Agrowth]} of \cite{lehericy2018misspe} are replaced by the following more general assumption, which allows to improve the penalty (the original assumptions induce a penalty proportional to $r\dim_D + rd + r^2$ instead of $\dim_D + rd + r^2$). Let $N\left(B, d, \epsilon \right)$ be the smallest number of brackets of size $\epsilon$ for the distance $d$ needed to cover the set of functions $B$.

\begin{description}
\item[\textbf{[Aentropy']}]
There exist a mapping $(r, D, n, A) \longmapsto C_\text{aux}(r, D, n, A) \geq 1$, a sequence of nonnegative integers $(m_{r,D})_{r \geq 1, D \geq 1}$ and a family of sets $(\Scal_{n,A})_{n \geq 1, A \geq 0} \subset \R^d$ such that for all $n \geq 1$ and $A \geq 0$, $\po^\star(Y_1 \notin \Scal_{n,A}) \leq \exp(-2A / B(n))$ where $B(n)$ is as in \textbf{[Atail]}, for all $y \in \Scal_{n,A}$,
\begin{equation*}
\underset{(r',-,-,g') \in \Tbf_n}{\sup} \left| \log \sum_{z=1}^{r'} g'_z(y) \right| \leq A
\end{equation*}
and for all $r \geq 1$, $D \geq 1$, $n \geq 1$, $A \geq B(n)$ and $\delta \in (0,1)$,
\begin{multline}
\label{eq_bracketing_entropy}
N\left(\left\{ \left(y \mapsto g_z(y) \one_{y \in \Scal_{n,A}} \right)_{z = 1, \dots, r} \right\}_{(r,-,-,g) \in T_{r,D}}, d_\infty, \delta \right)  \\
	\leq \max \left( \frac{C_\text{aux}(r, D, n, A)}{\delta} , 1\right)^{m_{r,D}}\eqsp,
\end{multline}
where $d_\infty$ is the distance associated with the supremum norm on $(\Lbf^\infty(\mathcal{Y}))^r$.
Moreover, there exist an integer $n_\text{growth}$ and a constant $c_\text{growth} > 0$ such that for all $n \geq n_\text{growth}$,
\begin{equation*}
\sup_{(r, D) \in \Pfrak_n} \log C_\text{aux}(r,D,n, B(n) \log n) \leq c_\text{growth} (\log n)^2 \log \log n\eqsp.
\end{equation*}
\end{description}
Note that choosing $S_{n,A} = \{ y \in \R^d : \sup_{(r',-,-,g') \in \Tbf_n} | \log \sum_{z=1}^{r'} g'_z(y) | \leq A \}$ gives the original formulation of~\cite{lehericy2018misspe}. Write $\po_{r,\pi,Q,g}$ the distribution of a hidden Markov model with parameter $(r,\pi,Q,g)$. Lemma 4 and 5 of~\cite{lehericy2018misspe} show that for all $r$, $D$ and for all $(r,\pi,Q,g) \in T_{r,D}$, the limit
$\Kbf(\po_{K^\star, \gamma^\star} \| \po_{r,Q,g}) = \lim_m m^{-1} KL( \po_{Y_1^m | K^\star, \gamma^\star} \| \po_{Y_1^m | r,\pi,Q,g} )$
exists, is finite and does not depend on $\pi$. This quantity coincides with the one defined in Lemma~\ref{lem_KL_equivalent_VT} when the hidden Markov model with parameter $(r,\pi,Q,g)$ is a translation hidden Markov model with transition kernel in $\Omega_\omega^C$ and emission density in $\Gamma$. Define the loglikelihood of a hidden Markov model with parameter $(r,\pi,Q,g)$ by
\begin{equation*}
\ell_{n}^\text{HMM}(r,\pi,Q,g) = \log \left(\sum_{z_1, \ldots, z_n\in \{1,\ldots,r\}}\!\!\!\!\!\!\!\!\!\!\pi(z_1) g_{z_1}(Y_1)\prod_{t=2}^{n}Q(z_{t-1},z_{t}) g_{z_t}(Y_t)
\right).
\end{equation*}
Theorem 8 of~\cite{lehericy2018misspe} may now be stated with a noteworthy modification: not all possible number of states and model indices are considered during the model selection step~\eqref{eq_model_selection_thLuc}, but only the ones in $\Pfrak_n$. This has no consequence on the proof.

\begin{theo}
\label{TH_ORACLE_SIMPLIFIE}
Assume that \textbf{[A$\star$forgetting]}, \textbf{[A$\star$mixing]}, \textbf{[A$\star$tail]},
\textbf{[Aergodic]}, \textbf{[Atail]} and \textbf{[Aentropy']} hold.
Assume that ${\sigma_-(n) = C_\sigma (\log n)^{-1}}$ and ${B(n) = C_B \log n}$ for some constants $C_\sigma \geq 0$ and $C_B \geq 2$. Let $\alpha \geq 0$.  For all $r$ and $D$, let
\begin{equation*}
{(r, \hat\pi_{r,D,n}, \hat{Q}_{r,D,n}, \hat{g}_{r,D,n})}
	\in \underset{(r,\pi,Q,g) \in T_{r,D}}{\argmax} \; \frac{1}{n} \ell_{n}^\text{HMM}(r,\pi,Q,g)\eqsp,
\end{equation*}
\begin{equation}
\label{eq_model_selection_thLuc}
(\hat{r}_n, \hat{D}_n) \in \underset{(r, D) \in \Pfrak_n}{\argmax} \left( \frac{1}{n} \ell_{n}^\text{HMM}(r, \hat\pi_{r,D,n}, \hat{Q}_{r,D,n}, \hat{g}_{r,D,n}) - \pen(n,r,D) \right)\eqsp,
\end{equation}
for some function $\pen$, and let
\begin{equation*}
(\hat{r}_n, \hat{\pi}_n, \hat{Q}_n, \hat{g}_n) = (\hat{r}_n, \hat\pi_{\hat{r}_n,\hat{D}_n,n}, \hat{Q}_{\hat{r}_n,\hat{D}_n,n}, \hat{g}_{\hat{r}_n,\hat{D}_n,n})
\end{equation*}
be the nonparametric maximum likelihood estimator.
Then, there exist constants $A$, $C_\pen$ and $n_0$ depending only on $\alpha$, $C_\sigma$, $C_B$, $n_*$, $c_*$ and $c_\text{growth}$ such that for all
\begin{equation*}
n \geq n_\text{growth}
		\vee n_0
		\vee \exp\left(C_\sigma \left( (1 + C_*) \vee \frac{2-\rho_*}{1-\rho_*} \vee e^2 \right) \right) 
		\vee \exp\left(\frac{B^*}{C_B}\right)
		\vee \exp \sqrt{\frac{C_\sigma}{2}(n_*+1)}\eqsp,
\end{equation*}
all $t \geq 1$,  all $\eta \leq 1$, with probability at least $1 - e^{-t} - 2 n^{-\alpha}$,
\begin{multline*}
\Kbf( \po_{K^\star, \gamma^\star} \| \po_{\hat{r}_n, \hat{Q}_n, \hat{g}_n})
	\leq (1+\eta) \underset{(r,D) \in \Pfrak_n}{\inf} \left\{ \inf_{(r,\pi,Q,g) \in T_{r,D}}\Kbf( \po_{K^\star, \gamma^\star} \| \po_{r,Q,g} )
	 + 2 \pen(n,r,D) \right\} \\
		+ \frac{A}{\eta} t \frac{(\log n)^8}{n}
\end{multline*}
as soon as
\begin{equation*}
\pen(n,r,D) \geq
	\frac{C_\pen}{\eta} ( m_{r,D} + r^2 - 1) \frac{(\log n)^{14} \log \log n}{n}\eqsp.
\end{equation*}
\end{theo}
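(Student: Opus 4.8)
The plan is not to reprove this inequality from scratch but to reduce it to \cite[Theorem~8]{lehericy2018misspe}, whose statement and proof it reproduces up to two deliberate modifications, and to check that each of these is harmless. The first modification is that the selection step~\eqref{eq_model_selection_thLuc} ranges over $(r,D)\in\Pfrak_n$ rather than over the full admissible index set $\{(r,D):r\leq 1/(2\sigma_-(n)),\ m_{r,D}\leq 2n\}$. This only shrinks the family of competitors: the oracle term in the conclusion is itself an infimum over $\Pfrak_n$, and all the union bounds over models in the concentration part of the proof of~\cite{lehericy2018misspe} are now taken over a subset, which can only decrease the total failure probability. Hence every inequality in that proof goes through verbatim with the same constants, and this modification costs nothing.

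The second, and more substantial, modification is that assumptions \textbf{[Aentropy]} and \textbf{[Agrowth]} of~\cite{lehericy2018misspe} are replaced by the weaker \textbf{[Aentropy']}. The plan here is to trace exactly where the entropy assumptions enter the proof of \cite[Theorem~8]{lehericy2018misspe}: they are used only to (i) bound, by a quantity of the form $\max(C/\delta,1)^{m_{r,D}}$, the $\delta$-bracketing numbers for the supremum distance of the truncated emission classes $\{(y\mapsto g_z(y)\one_{y\in\Scal_{n,A}})_{z=1,\dots,r}\}_{(r,-,-,g)\in T_{r,D}}$, which controls the chaining term in the Bernstein-type deviation inequality for the centered log-likelihood ratio, and (ii) bound, by a quantity of order $(\log n)^2\log\log n$, the logarithm of that constant $C$ when $A=B(n)\log n$. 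Assumption~\textbf{[Aentropy']} provides precisely (i) through~\eqref{eq_bracketing_entropy} with $C=C_{\text{aux}}(r,D,n,A)$, and precisely (ii) through the bound $\log C_{\text{aux}}(r,D,n,B(n)\log n)\leq c_{\text{growth}}(\log n)^2\log\log n$. Feeding these into the chaining and peeling argument of~\cite{lehericy2018misspe} leaves the $(\log n)$-powers and the absolute constants $A$, $C_{\text{pen}}$, $n_0$ unchanged and propagates the exponent $m_{r,D}$ into the penalty term; since for translation hidden Markov models the $r$ emission densities share a single density $\gamma$ and differ only by the shifts $x_z$, the natural parametrization gives $m_{r,D}$ of order $\dim_D+rd$ rather than $r\dim_D$, which is the claimed improvement of the penalty.

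The step I expect to be the main obstacle is the bookkeeping around the truncation set $\Scal_{n,A}$: in~\cite{lehericy2018misspe} this set is taken to be $\{y:\sup_{(r',-,-,g')\in\Tbf_n}|\log\sum_{z}g'_z(y)|\leq A\}$, and one must verify that the only properties of it used in that proof are the two that \textbf{[Aentropy']} now imposes on a general $\Scal_{n,A}$, namely $\po^\star(Y_1\notin\Scal_{n,A})\leq \exp(-2A/B(n))$ — used, together with \textbf{[A$\star$tail]} and \textbf{[A$\star$mixing]}, to pay only a negligible probability for restricting the relevant log-likelihood ratios to the event that all observations lie in $\Scal_{n,A}$ — and the uniform bound $\sup_{(r',-,-,g')\in\Tbf_n}|\log\sum_z g'_z(y)|\leq A$ on $\Scal_{n,A}$, which is what makes the truncated ratios bounded by a multiple of $A$ and hence amenable to the Bernstein inequality. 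Once this is confirmed, Theorem~\ref{TH_ORACLE_SIMPLIFIE} follows line by line from \cite[Theorem~8]{lehericy2018misspe}, and Proposition~\ref{prop_oracle_simplifie} will then be obtained by instantiating it for the translation hidden Markov models of Section~\ref{sec:mle}, i.e.\ by verifying that they satisfy \textbf{[A$\star$forgetting]}, \textbf{[A$\star$mixing]}, \textbf{[A$\star$tail]}, \textbf{[Aergodic]}, \textbf{[Atail]} and \textbf{[Aentropy']} with $\sigma_-(n)\asymp (\log n)^{-1}$, $B(n)\asymp\log n$ and $m_{r,D}=\dim_D+rd$.
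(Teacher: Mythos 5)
Your proposal takes essentially the same route as the paper: the paper states Theorem~\ref{TH_ORACLE_SIMPLIFIE} as \cite[Theorem~8]{lehericy2018misspe} with the selection step restricted to $\Pfrak_n$ (remarking only that ``this has no consequence on the proof'') and with \textbf{[Aentropy]}/\textbf{[Agrowth]} replaced by \textbf{[Aentropy']}, exactly the two reductions you identify and justify. Your tracing of where the bracketing-entropy bound and the growth condition enter, and of the two properties of $\Scal_{n,A}$ actually used, is if anything more explicit than the paper's own treatment, and is correct.
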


Let us now check the assumptions. \textbf{[A$\star$mixing]} and \textbf{[A$\star$forgetting]} follow from Lemma 1 of~\cite{lehericy2018misspe} and from H\ref{hyp_transitionKernel} with $\rho_\star = 1 - 4/C^2$, $C_\star = C^2/4$, $n_\star = 1$ and $c_\star = -\log(1 - 2/C)/2$, where $C$ is the constant from H\ref{hyp_transitionKernel}. \textbf{[A$\star$tail]} follows from assumption H\ref{hyp_densiteEmission} with $B^\star = \max(1, \log \|b\|_1)$: by definition of~$\lambda_b$ and $b$, for all $i \in \Zbb$, $k \in \N$, $y_{i-k}^i \in (\R^d)^{k+1}$ and $v \geq e$,
\begin{align*}
\frac{\rmd \po_{Y_i | Y_{i-k}^{i-1}, K^\star, \gamma^\star}}{\rmd \lambda_b}(y_i | y_{i-k}^{i-1})
	= \frac{\int \gamma^\star(y_i - x) \rmd\po_{X_i | Y_{i-k}^{i-1}, K^\star, \gamma^\star}(x | y_{i-k}^i)}{b(y_i) / \|b\|_1}
	\leq \|b\|_1 \leq v^{B^\star} \eqsp.
\end{align*}
For each $r \geq 1$ and $D \geq 1$, let $m_{r,D} = \dim_D + rd$. For each $n \geq 1$, let $\sigma_-(n) = (2\log n)^{-1}$ and $\Pfrak_n = \{ (r,D) : r \leq \log n \text{ and } \dim_D \leq n \}$. For $n$ large enough, $\Pfrak_n$ is indeed a subset of $\{ (r,D) \in (\N^*)^2 : r \leq 1 / (2\sigma_-(n)) \text{ and } m_{r,D} \leq 2n\}$. For each $r \geq 1$ and $D \geq 1$, the model $T_{r,D}$ is the set of translation hidden Markov model parameters in $S_{r,D}$ seen as hidden Markov model parameters (with the dominating measure $\lambda_b$ on $\R^d$ instead of the Lebesgue measure):
\begin{equation*}
T_{r,D} = \left\{ \left(r,\pi_Q,Q,\left(y \longmapsto \frac{\gamma(y - x_r)}{b(y) / \|b\|_1}\right)_{z = 1, \dots, r} \right) : ((x_z)_{z=1, \dots, r}, Q, \gamma) \in S_{r,D}, \pi_Q Q = \pi_Q \right\} \eqsp.
\end{equation*}
By definition of $S_{r,D}$, for all $(r,\pi,Q,-) \in T_{r,D}$ and $x,x' \in \{1, \dots, r\}$, $Q(x,x') \geq (Cr)^{-1}$ and $\pi(x) \geq (Cr)^{-1}$. Thus, for all $(r,\pi,Q,-) \in \Tbf_n$, $Q(x,x') \geq (C \log n)^{-1} \geq \sigma_-(n)$ since $C \geq 2$. The same holds for $\pi$, so that \textbf{[Aergodic]} is satisfied.

By H\ref{hyp_densiteEmission}, for all $n \geq 1$ and $y \in \R^d$, $\sup_{(r,-,-,g) \in \Tbf_n} \sum_{z=1}^r g_z(y) \leq \|b\|_1 \log n$, and by H\ref{hyp_tails},
$$
\inf_{(r,-,-,g) \in \Tbf_n} \sum_{z=1}^r g_z(y)
	\geq \|b\|_1 m(y)/b(y)\eqsp,
$$
so that by Markov's inequality, for all $t > 0$, with $\epsilon$ as in H\ref{hyp_tails},
\begin{align*}
\po_{K^\star, \gamma^\star} \left[ 
	\left(\inf_{(r,-,-,g) \in \Tbf_n} \sum_{z=1}^r g_z(y)\right)^{-\epsilon}
		\geq t
\right]
	\leq \|b\|_1^{-\epsilon} \frac{\E_{K^\star, \gamma^\star} [(b(Y_1)/m(Y_1))^{\epsilon}]}{t}\eqsp,
\end{align*}
so that there exists a constant $C_{H\ref{hyp_tails}} > 0$ such that
\begin{align*}
\po_{K^\star, \gamma^\star} \left[ 
	\inf_{(r,-,-,g) \in \Tbf_n} \log \sum_{z=1}^r g_z(y)
		\leq - \frac{1}{\epsilon} u
\right]
	\leq C_{H\ref{hyp_tails}} e^{-u}\eqsp.
\end{align*}
Thus, there exists $n_\text{tail}$ such that \textbf{[Atail]} holds for any $n \geq n_\text{tail}$ and for any $B(n) \geq \max(2/\epsilon, \log(\|b\|_1 \log n))$. Choose $B(n) = \log n$.

Finally, \textbf{[Aentropy']} is implied by the following assumption, which follows from H\ref{hyp_densiteEmission} and H\ref{hyp_modelesParam} with $c(r,D,A) = c(D,A) + C_\Gamma$.
\begin{description}
\item[\textbf{[Aentropy'']}] There exists a mapping $(r,D,A) \in \N^* \times \N^* \times \R_+ \longmapsto c(r,D,A)$ and a constant $c'$ such that $\log c(r,D,A) \leq c' ( \log m_{r,D} + A )$. There exists a sequence $(\Theta_D)_{D \geq 1}$ of sets such that for all $D \geq 1$, $\Theta_D \subset [-1,1]^{\dim_D}$ and there exists a surjective mapping $\theta \in \Theta_D \longmapsto \gamma^\theta \in G_D$. For all $r \geq 1$, $D \geq 1$, $A \geq 0$ and $y \in \R^d$ such that $\log(b(y)/m(y)) \leq A$, the mapping
$
(x, \theta) \in \Lambda^r \times \Theta_D \longmapsto (\gamma^\theta(y - x_z) / b(y))_{z \in \{1, \dots, r\}}
$
is $c(r,D,A)$-Lipschitz (when $\Lambda$ and $\Theta_D$ are endowed with the supremum norm).
\end{description}
Let us see how this implies \textbf{[Aentropy']}.
Let $\Scal_{n,A} = \{ y \in \R^d : \log(b(y)/m(y)) \leq A \}$. By H\ref{hyp_tails} and Markov's inequality, $\po^\star(Y_1 \in \Scal_{n,A}) \leq \exp(- A \epsilon / 2)$ for $A$ large enough. Moreover, for all $A \geq \log(\|b\|_1 \log n)$ and $y \in \Scal_{n,A}$,
\begin{equation*}
\underset{(r',-,-,g') \in \Tbf_n}{\sup} \left| \log \sum_{z=1}^{r'} g'_z(y) \right|
	\leq \max\left( \log \frac{b(y)}{\|b\|_1 m(y)}, \log (\|b\|_1 \log n) \right) \leq A.
\end{equation*}
A bracket covering of size $\delta$ of $[-1,1]^{rd} \times [-1,1]^{\dim_D}$ gives a bracket covering of size $\delta L$ of $\Lambda^r \times \Theta_D$, which in turn gives bracket covering of size $c(r,D,A) \delta L \|b\|_1$ of the set
\begin{equation*}
\left\{ \left(y \longmapsto \|b\|_1 \frac{\gamma(y-x_z)}{b(y)} \one_{y \in \Scal_{n,A}} \right)_{z = 1, \dots, r} : x \in \Lambda^r, \gamma \in G_D \right\}.
\end{equation*}
Since there exists a bracket covering of size $\delta$ of $[-1,1]$ with cardinality at most $\max(2/\delta, 1)$, Equation~(\ref{eq_bracketing_entropy}) of \textbf{[Aentropy']} holds with $C_\text{aux}(r,D,n,A) = 2 c(r,D,A) L \|b\|_1$.
Finally, since $\sup_{(r,D) \in \Pfrak_n} \log c(r,D,A) \leq c' (\log n + A)$, the last part of \textbf{[Aentropy']} holds.

Thus, Theorem~\ref{TH_ORACLE_SIMPLIFIE} holds and ensures that there exists $n_0$, $C_\pen$ and $A$ such that if $\pen(n,r,D) \geq C_\pen(m_{r,D}+r^2-1) (\log n)^{14}/n$, then for all $n \geq n_0$ and $t \geq 1$, with probability at least $1 - e^{-t} - 2n^{-2}$,
\begin{multline*}
\Kbf( \po_{K^\star, \gamma^\star} \| \po_{\hat{\Xfrak}_n, \hat{Q}_n, \hat{\gamma}_n})
	\leq 2 \underset{(r,D) \in \Pfrak_n}{\inf} \left\{ \inf_{(\Xfrak,Q,\gamma) \in S_{r,D}}\Kbf( \po_{K^\star, \gamma^\star} \| \po_{\Xfrak,Q,\gamma} )
	 + 2 \pen(n,r,D) \right\} \\
		+ A t \frac{(\log n)^8}{n}
\end{multline*}
and Proposition~\ref{prop_oracle_simplifie} follows by taking $t = 2 \log n$ and recalling that $m_{r,D} = \dim_D + rd$ and $\Pfrak_n = \{(r,D) : r \leq \log n \text{ and } \dim_D \leq n \}$.

\section{Additional Simulations based on Least Squares for Characteristic Functions}
\label{sec:simu:appendix}
In this section, the empirical least squares criterion $M_n(R)$ introduced in Section~\ref{sec:least:squares} is approximated to obtain a practical estimate of $R$ using the same model as in Section~\ref{sec:simu}. 
The estimate $\widehat {\Phi}_{n}$ of the characteristic function of the observations $(Y_1,Y_2)$ is given for all $(t_{1},t_{2})\in\mathbb{R}^2$ by
\[
\widehat {\Phi}_{n}(t_1,t_2) = \frac{1}{n}\sum_{j=1}^{n-1} \mathrm{e}^{it_1 Y_j + it_2 Y_{j+1}}\eqsp.
\]
The function $w$ is set as the probability density function of a Gaussian random variable with standard deviation $\sigma =3$ and $M_n$ is estimated by the Monte Carlo estimate:
\[
\widehat{M}_n(R) = \frac{1}{N}\sum_{\ell=1}^N \left\vert   \widehat {\Phi}_{n}(U_1^\ell,U_2^\ell)\Phi_{R}(U_1^\ell;0)\Phi_{R}( 0; U_2^\ell)  -
  \Phi_{R}(U_1^\ell,U_2^\ell)\widehat {\Phi}_{n}(U_1^\ell;0)\widehat {\Phi}_{n}( 0; U_2^\ell)
  \right\vert^{2}\eqsp,
\]
where $(U_1^\ell,U_2^\ell)_{1\leq \ell \leq N}$ are independent and identically distributed with distribution $w$. In the following experiments, $N$ is set to $5000$. This estimated criterion is minimized over the set $\mathcal{D}_r$ of piecewise constant probability densities on $(-1,1)\times(-1,1)$ with $r^2$ uniformly spaced cells:
\[
\mathcal{D}_r = \Big\{R: \mathbb{R}^2\to \mathbb{R}_+\eqsp;\eqsp R = \sum_{i,j=1}^r \alpha_{i,j} \mathds{1}_{(x_i,x_{i+1})\times(x_j,x_{j+1})}\Big\}\eqsp,
\]
where for all $1\leq i,j\leq r$, $x_i = -1 + 2(i-1)/r$, $\alpha_{i,j}\geq 0$ and $\sum_{i,j=1}^r \alpha_{i,j} = r^{-2}$. In this setting where the support of the law of $(X_1,X_2)$ is compact and known, the up to translation indeterminacy is ruled out. The optimization is performed using the Covariance Matrix Adaptation Evolutionary Strategy \cite{igel:hansen:roth:2007} which optimizes iteratively all parameters using $(\mu,\lambda)$-selection. At each iteration, the best offsprings of the current parameter estimate are combined to form the population of the following iteration and the other offsprings are discarded.  

The performance of the least squares approach is assessed by comparing the estimated probability that $(X_1,X_2)$ lies in each cell $(x_i,x_{i+1})\times(x_j,x_{j+1})$, $1\leq i,j\leq r$, which is $\widehat\alpha^n_{i,j} r^2$ and the benchmark estimation  $\widetilde\alpha^{\mathsf{n,emp}}_{i,j}$ that would be computed if the sequence $(X_k)_{1\leq k \leq n}$ were observed: $\widetilde p^{\mathsf{n,emp}}_{i,j}  = n^{-1}\sum_{k=1}^{n-1}\mathds{1}_{(x_i,x_{i+1})\times(x_j,x_{j+1})}(X_k,X_{k+1})$. The results are displayed in Figure~\ref{fig:leastsquares:L1} over $10$ independent runs, when the order $r$ is in $\{10,20,30\}$, with CMA-ES initialized at a random point, and a maximum number of evaluations of  $\widehat{M}_n(R)$ set to 75000. Each estimate is obtained with a sequence of $n=100000$ observations and the $\mathrm{L}_1$ score is 
\begin{equation}
\label{eq:L1scores}
\varepsilon^r_{1,n} = \frac{1}{r^2}\sum_{i,j=1}^{r}\left|r^2\widehat\alpha^n_{i,j}-\widetilde p^{\mathsf{n,emp}}_{i,j}\right|\eqsp.
\end{equation} 
The associated estimated probabilities for the distribution of $X_1$ are displayed in Figure~\ref{fig:leastsquares:X1} with their confidence regions.

\begin{figure}
\includegraphics[width=.9\textwidth]{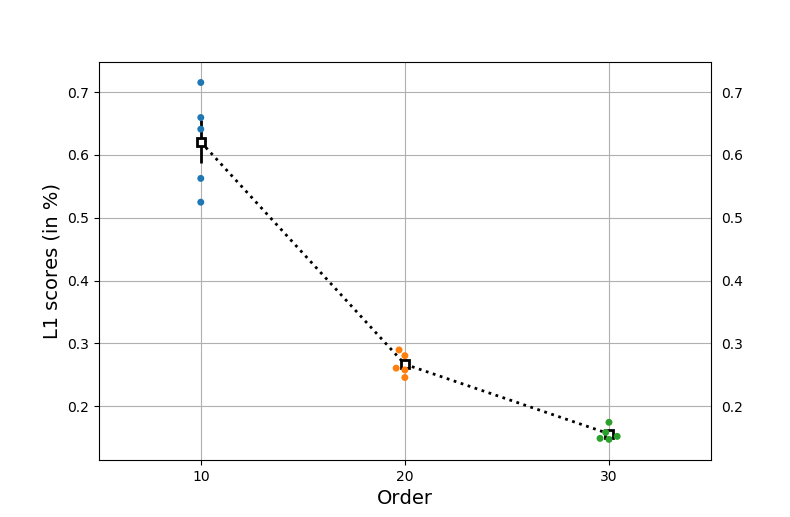}
\caption{$\mathrm{L}_1$ scores computed according to \eqref{eq:L1scores}. Each dot is an estimated value with the least squares approach. For each value of $r$, the mean value (squares) over all runs as long as the empirical standard deviation (bars) are displayed.}
\label{fig:leastsquares:L1}
\end{figure}

\begin{figure}
\includegraphics[width=.83\textwidth]{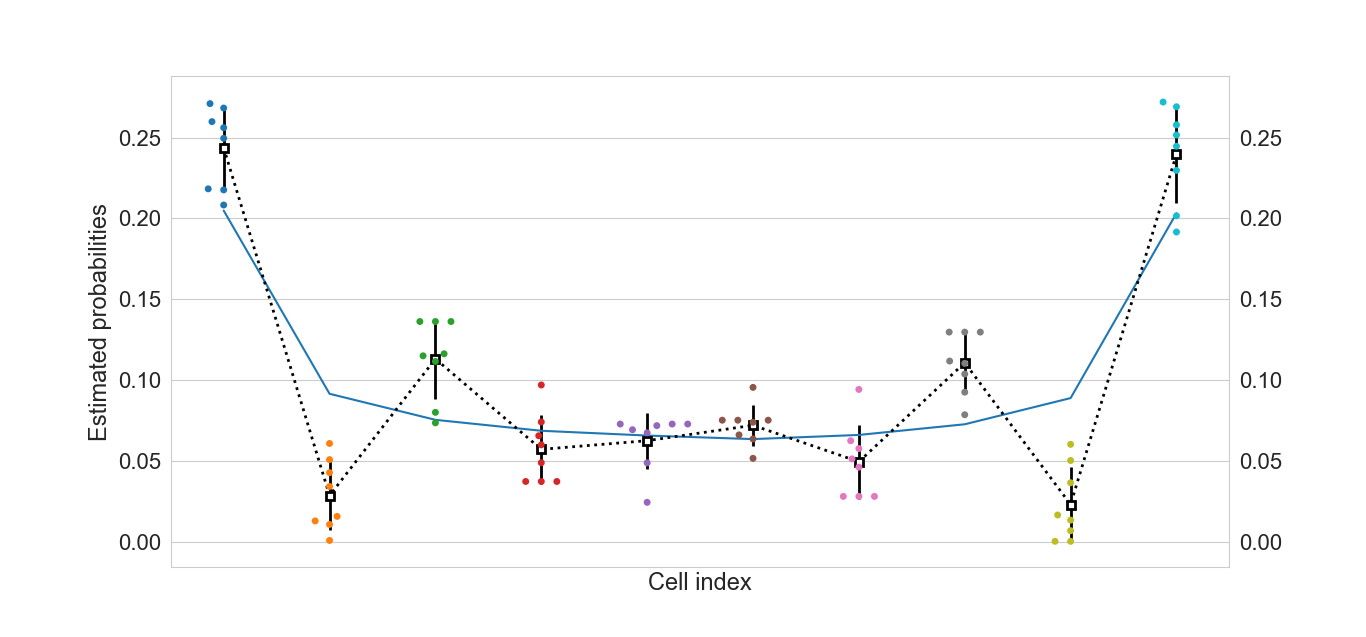}

\includegraphics[width=.83\textwidth]{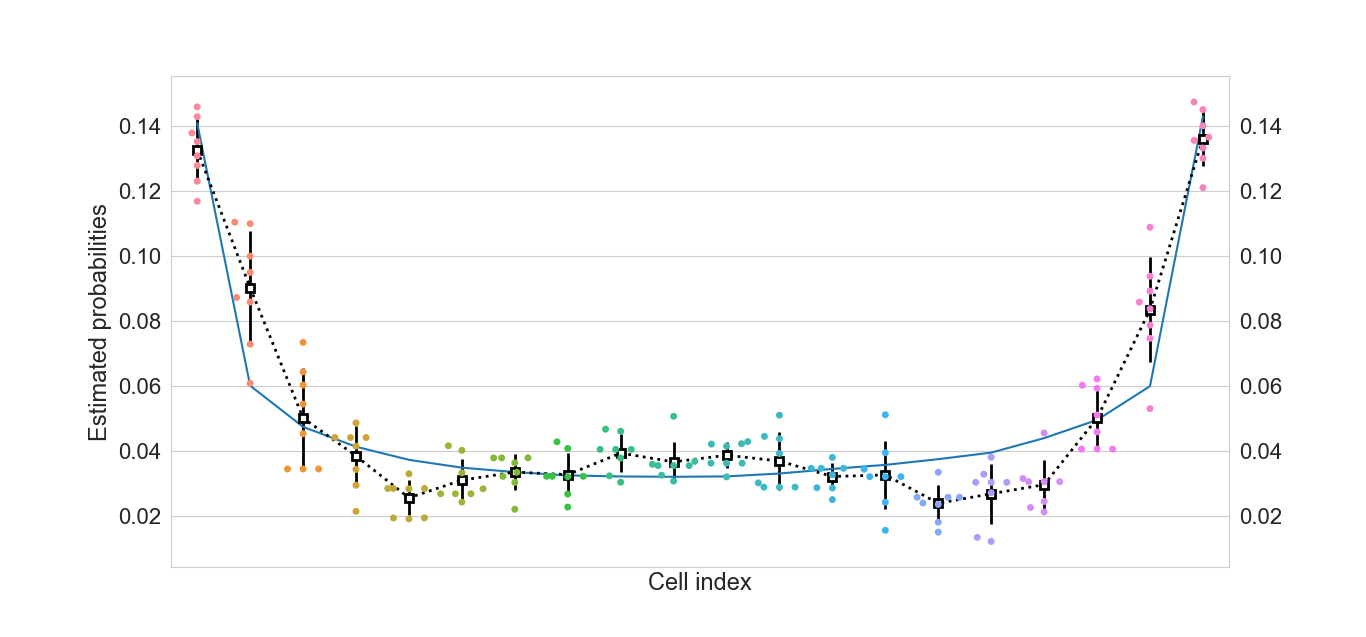}

\includegraphics[width=.83\textwidth]{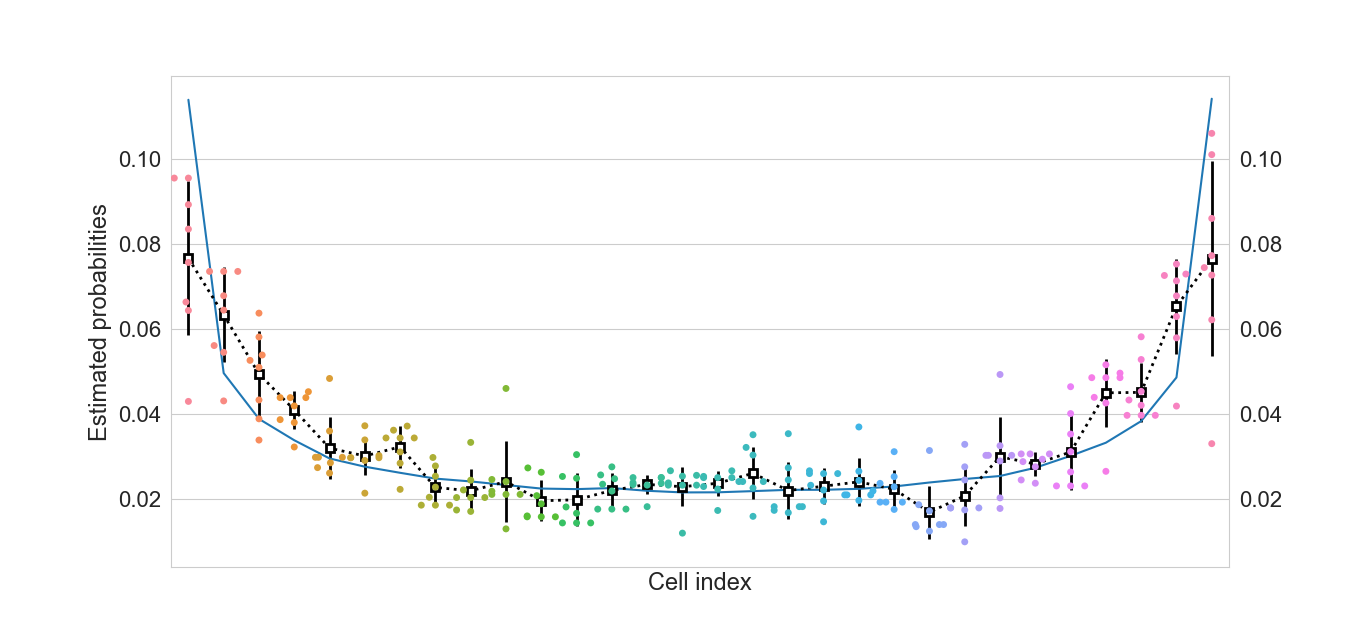}

\caption{Estimated probabilities associated with the marginal distribution of $X_1$ for $r=10$ (top), $r = 20$ (middle) and $r = 30$ (bottom). The blue line is the empirical estimate when the sequence $(X_k)_{1\leq k \leq n}$ is observed (mean estimate over the 10 Monte Carlo runs). Each dot is an estimated value with the least squares approach. For each value of $r$, the mean value (squares) over all runs as long as the empirical standard deviation (bars) are displayed.}
\label{fig:leastsquares:X1}
\end{figure}





\end{document}